\documentclass[12pt,reqno]{amsart}
\usepackage{amssymb}
\usepackage{amsmath}
\usepackage{amscd,amsxtra,amsthm}
\usepackage[all]{xy}
\usepackage{etex}
\usepackage{pictex}
\usepackage{graphicx}
\usepackage{mathtools}
\usepackage{float}
\usepackage{xcolor}
\usepackage[utf8]{inputenc}
\usepackage[T1]{fontenc}
\usepackage{subcaption}
\usepackage{tikz}
\usepackage{tkz-tab}
\usepackage{caption}

\usepackage{hyperref}
\hypersetup{colorlinks, citecolor=blue}

\usepackage{mathtools}
\usepackage{multirow}
\DeclarePairedDelimiter{\ceil}{\lceil}{\rceil}
\DeclarePairedDelimiter{\floor}{\lfloor}{\rfloor}
\usepackage{comment}

\newtheorem{lemma}{Lemma}[section]

\newtheorem{theorem}[lemma]{Theorem}

\theoremstyle{conjecture}

\theoremstyle{definition}

\theoremstyle{remark}

\theoremstyle{observation}

\numberwithin{equation}{section}

\begin{document}

\title[Weakened Gallai-Ramsey Numbers for Books]{Weakened Gallai-Ramsey Numbers for Books}

\author{Mark Budden}
\address{Department and Mathematics and Computer Science, Western Carolina University, Cullowhee, NC, USA.}
\email{mrbudden@email.wcu.edu}

\author{Adam Gregory}
\address{Department and Mathematics and Computer Science, Western Carolina University, Cullowhee, NC, USA.}
\email{gregory@wcu.edu}

\subjclass[2020]{Primary: {05C55, 05D10}; Secondary: {05C15}}
\keywords{Ramsey number, Gallai coloring, book graph} 

\begin{abstract}
For $1\le s<t$ and any graph $G$, the weakened Gallai-Ramsey number $gr^t_s(G)$ is defined to be the least  $p\in \mathbb{N}$ such that every Gallai $t$-coloring of the edges of $K_p$ (i.e., a $t$-coloring that lacks rainbow triangles) contains a subgraph isomorphic to $G$ whose edges use at most $s$ of the colors.  In the case of a book graph $B_n:=K_2+nK_1$, Jakhar and Moun determined the values $gr^3_2(B_3)=6$ and $gr^3_2(B_4)=7$.  In this paper, we extend their results to $t>3$ colors, and we determine the values of $gr^3_2(B_n)$ for $5\le n\le 15$.  General lower bounds for $gr^t_2(B_n)$ are also given.
\end{abstract}

\maketitle

\section{Introduction}\label{intro}


Denote the complete graph of order $n$ by $K_n$, and let $mK_n$ denote the disjoint union of $m$ copies of $K_n$.  Given two graphs $G_1$ and $G_2$, the {\it join} $G_1+G_2$ is the graph with vertex set $V(G_1)\cup V(G_2)$ and edge set $$E(G_1)\cup E(G_2)\cup \{xy \ | \ x\in V(G_1) \ \mbox{and} \ y\in V(G_2)\}.$$  For $n\in \mathbb{N}$, define the {\it book} $B_n:=K_2+nK_1$, which has order $n+2$ and size $2n+1$.  In this definition, the $K_2$-subgraph is called the {\it spine} of the book, and the vertices in the $nK_1$-subgraph are called the {\it pages} of the book.  Note that $B_1\cong K_3$ and $B_2\cong K_4-e$, where $K_n-e$ is the graph formed by taking $K_n$ and deleting a single edge.


A {\it $t$-coloring} of $K_n$ is a map $$f:E(K_n)\longrightarrow \{1 ,2, \dots , t\},$$ which is not assumed to be surjective (i.e., fewer than $t$ colors may be used).  If $t\le 3$, we will often refer to colors $1$, $2$, and $3$, as red, blue, and green, respectively.  
For graphs $G_1, G_2, \dots , G_t$, the {\it $t$-color Ramsey number} $r(G_1, G_2, \dots , G_t)$ is defined to be the least $p\in \mathbb{N}$ such that every $t$-coloring of $K_p$ contains a monochromatic subgraph in color $i$ that is isomorphic to $G_i$, for some $i\in \{1, 2, \dots , t\}$.  When $G_1=G_2=\dots =G_t$, we shorten the notation to $r^t(G_1)$.    A $t$-coloring of $K_{r(G_1, G_2, \dots , G_t)-1}$ that avoids a monochromatic copy of $G_i$ in color $i$, for all $i\in \{1, 2, \dots , t\}$, is called a {\it critical coloring for $r(G_1, G_2, \dots , G_t)$}.  
An overview of the known $2$-color Ramsey numbers for books can be found in
Section 5.3 of Radziszowski's dynamic survey \cite{Rad}.



A {\it Gallai $t$-coloring} of $K_n$ is a $t$-coloring $f$ of $K_n$ that avoids rainbow triangles.  A {\it rainbow triangle} is a subset $\{x, y, z\}\subseteq V(K_n)$ such that no two of $f(xy)$, $f(xz)$, and $f(yz)$ are equal.  For any graph $G$, the {\it Gallai-Ramsey number} $gr^t(G)$ is defined to be the least $p\in \mathbb{N}$ such that every Gallai $t$-coloring of $K_p$ contains a monochromatic subgraph that is isomorphic to $G$.  A Gallai $t$-coloring of $K_{gr^t(G)-1}$ that avoids a monochromatic copy of $G$ is called a {\it critical coloring for $gr^t(G)$}.   Since every Gallai $t$-coloring of $K_p$ is a $t$-coloring of $K_p$, it follows that $gr^t(G)\le r^t(G)$, for every graph $G$.  

In 2019, Zou, Mao, Magnant, Wang, and Ye \cite{ZMMWY} considered Gallai-Ramsey numbers for books, proving the following theorem.
\begin{theorem}[\cite{ZMMWY}]
For $t, n\in \mathbb{N}$ such that $2\le n\le 5$,
$$gr^t(B_n)=\begin{cases} n+2 & \mbox{if $t=1$} \\ (r^2(B_n)-1)\cdot 5^{(t-2)/2}+1 & \mbox{if $t$ is even} \\ 2\cdot (r^2(B_n)-1)\cdot 5^{(t-3)/2}+1 & \mbox{if $t\ge 3$ is odd.}\end{cases}$$
\end{theorem}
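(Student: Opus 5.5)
The plan has two halves: a blow-up construction for the lower bound and an induction on $t$ through Gallai's structure theorem for the upper bound. Gallai's theorem says that every Gallai coloring of $K_p$ with $p\ge 2$ admits a partition of the vertex set into $m\ge 2$ parts $V_1,\dots,V_m$ such that all edges between two given parts receive one color. Moreover, at most two colors appear on the edges between parts, and the reduced graph $K_m$ is colored with at most two colors. We may also take $m$ minimal, which forces $m=2$ or the reduced $2$-coloring to have no monochromatic cut, so that both colors appear.

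\textbf{Case $t=1$.} This is immediate: $B_n$ has $n+2$ vertices, so $K_{n+2}$ contains it, while $K_{n+1}$ does not.

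\textbf{Case $t=2$.} Every $2$-coloring is Gallai, so $gr^2(B_n)=r^2(B_n)$. This agrees with the formula, since $5^0=1$.

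\textbf{Lower bound.} Let $R=r^2(B_n)$.
\begin{itemize}
\item For even $t$, start from a critical $2$-coloring of $K_{R-1}$ in colors $1,2$. Iteratively substitute it into the vertices of the $2$-coloring of $K_5$ with no monochromatic triangle, using two fresh colors at each step; this takes $(t-2)/2$ steps.
\item For odd $t$, first blow up $K_2$ in a third color into two copies of the base coloring, giving $2(R-1)$ vertices. Then apply the $K_5$ blow-ups.
\end{itemize}
Blow-ups preserve the Gallai property. A monochromatic $B_n$ in a fresh color would force a monochromatic triangle in the $K_5$ or $K_2$ pattern, since the spine and a page lie in three distinct blobs. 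A copy in an old color must lie inside a single blob, because old colors do not cross blobs. So no monochromatic $B_n$ appears.

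\textbf{Upper bound.} Take a Gallai $t$-coloring of $K_p$ at the claimed value, with no monochromatic $B_n$, and pick a Gallai partition with reduced colors red and blue.
\begin{itemize}
\item If the reduced graph contains a red triangle among parts $V_i,V_j,V_k$, then one part of size at least $2$ containing a red edge gives a red book. More generally, a red edge $xy$ inside a part whose red neighbourhood has total size at least $n$ gives one. This bounds the sizes of the parts.
\item The counting combines: at most $R-1$ vertices using red and blue in total (by a reduction to a $2$-colored structure); parts with no red or blue inside use only $t-2$ colors, so induction gives each such part at most $gr^{t-2}(B_n)-1$ vertices; and the reduced graph has no monochromatic triangle among "large" parts, hence at most $5$ of them (since $r(K_3,K_3)=6$).
\item This gives the recursion $gr^t-1\le 5(gr^{t-2}-1)$, which matches the claimed formula for both parities, using the base cases $t=2$ and $t=3$.
\end{itemize}

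\textbf{Main obstacle.} The hardest step is the mixed case, where parts contain red or blue edges and are also joined in red or blue. Each such situation must be shown to be no better than the $K_5$ blow-up. This needs the specific values $r^2(B_n)$ for $n\le5$ and a careful weighting argument, and the case $t=3$ in particular requires showing $gr^3(B_n)=2(R-1)+1$ directly. I would first try bounding red-degree sums, using the fact that the size of a part with an internal red edge plus its red neighbourhood is at most $n+1$.
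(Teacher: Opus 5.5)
The paper quotes this theorem from Zou, Mao, Magnant, Wang and Ye without proof, so your outline has to stand on its own. The cases $t=1,2$ and both lower-bound constructions are correct. Blowing up the pentagon coloring of $K_5$, or a $K_2$ in a third color, keeps the coloring Gallai. A book in a fresh color would need its spine and a page in three distinct blobs, giving a monochromatic triangle in the pattern; for the $K_2$ step the new color class is bipartite. A book in an old color is connected, so it lies inside one blob.

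The gap is the upper bound for $t\ge 3$. That is the whole content of the theorem, and you only sketch it. Two of the sketched ingredients fail as stated. First, there is no ``reduction to a $2$-colored structure'' bounding by $R-1$ (with $R=r^2(B_n)$) the vertices that use the two reduced colors. A monochromatic book in a merged ``other'' color need not be monochromatic, so $r^2(B_n)$ does not apply. Moreover, in your own extremal $K_5$ blow-up (even $t\ge 4$), all $(R-1)\cdot 5^{(t-2)/2}$ vertices are incident with both reduced colors. Second, $r(K_3,K_3)=6$ bounds only the number of parts of order at least $n$, since three such parts pairwise joined in red give a red $B_n$. Parts of order less than $n$ can be numerous, and nothing in your count controls them.

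The missing idea is how to handle parts that contain a reduced color. A part $V$ is forced to be red-free only when the parts joined to $V$ in red have total order at least $n$. If that total is $s<n$, then $V$ may contain red edges and need only avoid a red $B_{n-s}$, together with monochromatic $B_n$ in the other colors. Your starting heuristic is also misstated: a part with an internal red edge can be large. Only the edge together with its red common neighbourhood is bounded by $n+1$ vertices.

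Configurations such as a small part joined in one color to a large remainder, iterated, are invisible to the recursion $gr^t-1\le 5(gr^{t-2}-1)$. That recursion tacitly assumes at most five parts, each avoiding both reduced colors. When the minimal partition has two parts, only one color is excluded, so $gr^{t-1}$ rather than $gr^{t-2}$ is what bounds the parts.

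An induction on the symmetric numbers $gr^t(B_n)$ therefore cannot close by itself. You need either an asymmetric statement for mixed book sizes, or explicit case analysis using small values $r(B_i,B_j)$, like the analysis this paper carries out for $gr^3_2$. That is where the hypothesis $n\le 5$ has to enter. Your outline never uses it, even though for $n\ge 6$ only non-matching bounds are known. The base case $t=3$, on which the odd half of your recursion rests, is also left undone. As written, the proposal establishes only the lower bound.
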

\noindent The authors of \cite{ZMMWY} also provided general upper and lower bounds for $gr^t(B_n)$, when $n\ge 6$.  Their upper bounds were later improved by Li and Li \cite{LL} in 2024.

In this paper, we consider a weakened version of $gr^t(B_n)$.  That is, rather than finding the threshold for which a monochromatic book is guaranteed, we seek the threshold for which a book is guaranteed whose edges use at most two colors.  More generally, let $1\le s<t$ and define the {\it weakened Gallai-Ramsey number} $gr^t_s(G)$ to be the least $p\in \mathbb{N}$ such that every Gallai $t$-coloring of $K_p$ contains a subgraph that is isomorphic to $G$ whose edges use at most $s$ colors.  Note that $gr^t(G)=gr^t_1(G)$.  For $s_1\le s_2$, the inequality $$gr^t_{s_2}(G)\le gr^t_{s_1}(G)$$ implies that the existence of weakened Gallai-Ramsey numbers follows from the existence of Gallai-Ramsey numbers.   At present, weakened Gallai-Ramsey numbers have been studied for complete graphs (\cite{BB}, \cite{BW}, \cite{JM}, and \cite{LBW}), for cycles (\cite{BW} and \cite{JM}), and for wheels, books, and complete bipartite graphs \cite{JM}.

In the case of books, Jakhar and Moun \cite{JM} proved that
$$gr^3_2(B_3)=6 \qquad \mbox{and} \qquad  gr^3_2(B_4)=7.$$
Our work herein builds on that of~\cite{JM}.  We begin by proving several structural lemmas for Gallai colorings, and general lower bounds for $gr^t_2(B_n)$ in Section \ref{genboundsec}.  In Section \ref{B3andB4}, we generalize Jakhar and Moun's \cite{JM} results to any $t \geq 3$.  In Section \ref{main}, we determine $gr_2^3(B_n)$, when $5 \leq n \leq 15$.  Section \ref{conclusion} concludes with a discussion about several directions that our work may be extended.

\section{Structural Results and General Bounds}\label{genboundsec}

One benefit of working with Gallai $t$-colorings of $K_n$ (as opposed to $t$-colorings of $K_n$) is that the proofs of upper bounds for Gallai-Ramsey numbers can be broken into manageable cases by using the following structural theorem (which is a reinterpretation of a result due to Gallai \cite{Gallai}).  

\begin{theorem}[\cite{GS}]\label{Gallaistruct}
Every Gallai-colored complete graph can be formed by replacing the vertices in a $2$-colored complete graph of order at least $2$ with Gallai-colored complete graphs.
\end{theorem}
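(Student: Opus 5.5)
We prove the statement, Gallai's structure theorem in the form of \cite{GS}, by strong induction on the order $m$ of the Gallai-colored complete graph $K$. The case $m=1$ is degenerate, since a single vertex is itself a Gallai-colored complete graph placed in a trivial reduced graph. So assume $m\ge 2$. Write $f$ for the coloring. The goal is a partition $V(K)=V_1\cup\dots\cup V_q$ with $q\ge 2$ such that, for every $i\ne j$, all edges between $V_i$ and $V_j$ receive one common color $c_{ij}$, and such that at most two colors occur among the $c_{ij}$. Contracting each $V_i$ to a single vertex then gives the required $2$-colored $K_q$. Each $V_i$ induces a Gallai-colored complete graph, because restricting the coloring cannot create a rainbow triangle.

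\textbf{Step 1.} Suppose some color $c$ spans a disconnected graph $H_c$ on $V(K)$, or suppose $K$ has only one color. In the one-color case, take all parts to be singletons. Otherwise let $C_1,\dots,C_r$ ($r\ge 2$) be the components of $H_c$. Every edge between two different components avoids $c$. Pick $x\in C_i$ and $y,y'\in C_j$ with $yy'$ of color $c$. The triangle $x,y,y'$ cannot be rainbow, and neither $xy$ nor $xy'$ has color $c$, so $f(xy)=f(xy')$. Moving along paths in $C_j$, the color from $x$ to $C_j$ is constant, and by symmetry it is constant on all of $C_i\times C_j$. Hence the components form a partition with monochromatic connections between parts.

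It remains to reduce to two colors. Contract the components to obtain a Gallai-colored $K_r$ that avoids color $c$. Apply induction to this smaller graph. This is valid when $r<m$; if $r=m$, then $H_c$ has no edges and $c$ is simply unused, so pass to another color. Induction gives a $2$-colored reduced graph on blocks of components. Taking unions of components within each block gives the desired partition of $V(K)$.

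\textbf{Step 2.} The main obstacle is showing that some color class is disconnected whenever at least two colors are used. This is the genuine content of Gallai's theorem. I would prove it by contradiction: assume every color class is a spanning connected graph. Take a vertex $v$ and two colors $a\ne b$. For each color $c$, connectivity of $H_c$ gives a path in color $c$ between any two vertices. Then examine the neighborhoods $N_a(v)$ and $N_b(v)$. A no-rainbow-triangle argument shows that the edges between $N_a(v)$ and $N_b(v)$ use only colors $a$ or $b$.

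The standard route I would use is the known lemma that in a Gallai coloring with at least three colors, some color spans a disconnected graph. Its proof goes by induction on $m$: delete a vertex $v$, apply induction to $K-v$ to get a disconnected color $c$ there, and then analyze how $v$ attaches to the components of $H_c$ in $K-v$. Either $v$ fails to connect them in color $c$, or a short case analysis of rainbow triangles through $v$ forces a different color to be disconnected. If only two colors are used overall, the statement holds trivially with singleton parts. So the lemma is only needed for three or more colors, and combining it with Step 1 completes the induction.
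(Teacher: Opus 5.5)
Your Step 1 is correct and complete. The components of a disconnected, used color class $c$ are pairwise joined in a single color different from $c$. Contracting them gives a Gallai-colored $K_r$ with $2\le r<m$, and induction on it yields the partition. (The paper gives no proof of this theorem; it only cites Gy\'arf\'as--Simonyi.)

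The gap is Step 2, which you rightly call the genuine content but only gesture at. Your first formulation is false as stated: with exactly two colors, no class need be disconnected. For example, the blue/green coloring of $K_5$ by $C_5$ and $\overline{C_5}$ (the critical coloring for $r^2(B_1)$ in the paper) has both classes connected. So any contradiction argument must use a third color, and your $N_a(v)$, $N_b(v)$ observation, while true, is never developed into anything. More seriously, the inductive sketch fails at its first move: $K-v$ need not use three colors, so the induction hypothesis need not apply. Color $K_4$ on $\{v_1,v_2,v_3,v_4\}$ red, except for a blue edge $v_1v_2$ and a green edge $v_3v_4$. This is a Gallai coloring with three colors, yet $K_4-v$ uses only two colors for every $v$. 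Finally, the ``short case analysis'' is precisely the part that must be written out.

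Both holes close in a few lines.
\begin{itemize}
\item Suppose $K-v$ uses at most two colors. Since $K$ uses at least three, $v$ meets exactly one color $e$ absent from $K-v$: two such colors on $vu_1$, $vu_2$ would make $vu_1u_2$ rainbow. Hence $K-v$ uses exactly two colors. If every edge at $v$ has color $e$, then $v$ is isolated in both of those color classes. Otherwise the $e$-class is a star at $v$ missing some vertex, so it is disconnected.
\item Suppose $K-v$ uses at least three colors. Take the used color $c$ and the components $C_1,\dots,C_r$ of $H_c$ in $K-v$ given by induction, with connecting colors $c_{ij}\ne c$. If $v$ has no $c$-edge into some $C_i$, then $C_i$ is still a component of $H_c$ in $K$, and we are done.
\item Otherwise, for $w\in C_j$ and $u\in C_i$ ($i\ne j$) with $f(vu)=c$, the triangle $vuw$ forces $f(vw)\in\{c,c_{ij}\}$.
\begin{itemize}
\item If every edge at $v$ has color $c$, then any other used color leaves $v$ isolated.
\item If $f(vw)=d\ne c$ for some $w\in C_j$, then $c_{ij}=d$ for all $i\ne j$. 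Take any used color $e\notin\{c,d\}$. No $e$-edge leaves $C_j$: edges to other components have color $d$, and edges to $v$ have colors in $\{c,d\}$. So $H_e$ is disconnected.
\end{itemize}
\end{itemize}
The lemma is vacuous for $m\le 3$, so this completes its induction. With the lemma in place, your proof of the theorem is complete.
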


In the statement of Theorem \ref{Gallaistruct}, the $2$-colored complete graph with order at least $2$ is called the {\it base graph}, and the Gallai-colored complete graphs that replace the vertices in the base graph are called the {\it blocks}.
The partitioning of the vertex set of the Gallai-colored complete graph into the vertex sets of the blocks is called a {\it Gallai partition}.    All edges joining any distinct pair of blocks are necessarily the same color.   The following four lemmas will be useful in simplifying the work needed to prove upper bounds for $gr^t_2(B_n)$ when using Theorem \ref{Gallaistruct}.

\begin{lemma}[Lemma 3.1, \cite{MN}]\label{not3}
If $\mathcal{B}$ is the base graph of a Gallai partition, chosen to have minimal order, then $|V(\mathcal{B})|\ne 3$.
\end{lemma}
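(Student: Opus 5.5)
Suppose, for contradiction, that $\mathcal{B}$ is a base graph of minimal order among all Gallai partitions of the given Gallai-colored complete graph, and that $|V(\mathcal{B})|=3$. Write the blocks as $A_1,A_2,A_3$. The base graph $\mathcal{B}$ is a triangle whose edges use at most $2$ colors. Moreover, every triangle in the Gallai coloring that meets all three blocks reproduces these colors, so it cannot be rainbow anyway; what matters is only the pattern on $\mathcal{B}$. The plan is to produce a Gallai partition with only $2$ parts, contradicting minimality (recall that base graphs must have order at least $2$).

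\textbf{Step 1.} Examine the colors on the triangle $\mathcal{B}$. Since $\mathcal{B}$ is $2$-colored, at least two of its three edges share a color. Relabel so that the edges $A_1A_2$ and $A_1A_3$ both have color $c$. (If all three edges have the same color, this still holds.)

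\textbf{Step 2.} Merge $A_2$ and $A_3$ into a single block $A_2\cup A_3$. The induced coloring on $A_2\cup A_3$ is a sub-coloring of a Gallai coloring, so it has no rainbow triangles and is again a Gallai-colored complete graph. Every edge between $A_1$ and $A_2\cup A_3$ has color $c$. Hence $\{A_1, A_2\cup A_3\}$ is a Gallai partition whose base graph is $K_2$ with a single color, and this is trivially a $2$-colored complete graph of order $2$.

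\textbf{Step 3.} This new base graph has order $2<3$, contradicting the minimality of $\mathcal{B}$. Therefore $|V(\mathcal{B})|\ne 3$.

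The only point that needs care is the definition of a Gallai partition. The argument requires that blocks need only be Gallai-colored complete graphs, with no monochromatic or other internal constraint, and that the base graph need only have order at least $2$ with uniform colors between blocks. Both conditions are clear from Theorem \ref{Gallaistruct} as stated, so I do not expect a real obstacle.
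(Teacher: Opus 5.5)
Your proof is correct. The paper gives no proof of its own and simply cites \cite{MN}; your merging argument (two edges of the $2$-colored triangle share a color and a vertex, so merging the other two blocks gives a Gallai partition with base graph of order $2$, contradicting minimality) is the standard proof of this fact.
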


\begin{lemma}\label{RemoveLargeBaseGraph}
Let $m,t \in \mathbb{N}$ such that $t\ge 3$ and consider a Gallai $t$-coloring of $K_m$ with base graph $\mathcal{B}$ satisfying $|V(\mathcal{B})|=k\ge 4$.  If $n\in \mathbb{N}$ such that $3\le n<m$ satisfies $k>\frac{2m}{m-n+1}$, then there exists a $B_n$-subgraph whose edges use at most two colors.
\end{lemma}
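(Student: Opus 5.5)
The idea is to place the spine of the book across two different blocks of the Gallai partition and take every page from outside those two blocks. Then every edge of the book joins two distinct blocks, so every edge is colored by the base graph. By Theorem~\ref{Gallaistruct}, the base graph $\mathcal{B}$ is $2$-colored, say with colors $1$ and $2$. So the whole book automatically uses at most two colors. It remains to find two blocks that leave at least $n$ vertices outside them.

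Let the blocks be $A_1,\dots,A_k$ with $|A_1|\le |A_2|\le \cdots \le |A_k|$ and $\sum_i |A_i| = m$. The two smallest blocks have average size at most the overall average, so
$$|A_1|+|A_2| \le \frac{2m}{k}.$$
The hypothesis $k>\frac{2m}{m-n+1}$ rearranges to $\frac{2m}{k}<m-n+1$. Since $|A_1|+|A_2|$ is an integer strictly less than $m-n+1$, we get $|A_1|+|A_2|\le m-n$. Hence $W:=V(K_m)\setminus (A_1\cup A_2)$ satisfies $|W|\ge n$.

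Now pick $x\in A_1$ and $y\in A_2$ (both blocks are nonempty, and $k\ge 4$ guarantees two distinct blocks), and choose any $n$ vertices $z_1,\dots,z_n\in W$. Each edge $xy$, $xz_i$ and $yz_i$ joins two distinct blocks, because each $z_i$ lies in some $A_j$ with $j\ge 3$. All edges between a given pair of blocks share a single color, and that color is the color of the corresponding edge of $\mathcal{B}$, so it lies in $\{1,2\}$. Therefore the subgraph with spine $xy$ and pages $z_1,\dots,z_n$ is a $B_n$ whose edges use at most two colors.

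The only step needing any care is the integrality argument turning the strict inequality $\frac{2m}{k} < m-n+1$ into the bound $|A_1|+|A_2|\le m-n$. The conditions $k\ge 4$ and $n\ge 3$ are not really used beyond ensuring that there are at least two blocks; presumably they match how the lemma is applied later, for instance alongside Lemma~\ref{not3}.
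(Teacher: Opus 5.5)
Your proof is correct and follows essentially the same route as the paper's: the paper sums $|X_i\cup X_j|$ over all $\binom{k}{2}$ pairs of blocks to show, by contradiction, that some pair has at most $m-n$ vertices, whereas you get the same $2m/k$ bound directly from the two smallest blocks and then use integrality. The book construction is identical in both: the spine joins the two chosen blocks and the pages come from the remaining blocks, so every edge is a base-graph edge.
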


\begin{proof}
Let $X_1, X_2, \dots , X_k$ be the vertex sets for the blocks in this Gallai $t$-coloring, and suppose that no pair of blocks contain a total of at most $m-n$ vertices.  Then every pair of blocks must contain a total of at least $m-n+1$ vertices (i.e., $|X_i\cup X_j|\ge m-n+1$ for all $i\ne j$).  Denote by $\mathfrak{X}^2$ the set of all pairs $\{X_i, X_j\}$, where $i\ne j$.  If we sum over all such pairs, we find that $$\mathop{\sum}\limits_{\{X_i,X_j\}\in \mathfrak{X}^2} |X_i\cup X_j|\ge {k\choose 2}(m-n+1).$$  In this sum, each block occurs in exactly $k-1$ of the pairs, from which it follows that $$\mathop{\sum}\limits_{\{X_i,X_j\}\in \mathfrak{X}^2} |X_i\cup X_j|=(k-1)m,$$ resulting in the inequality $$(k-1)m\ge  {k\choose 2}(m-n+1).$$  This inequality is equivalent to $$\frac{2m}{m-n+1}\ge k,$$ which contradicts the inequality assumed in the statement of the lemma.  It follows that there exists a pair of blocks such that $|
X_i\cup X_j|\le m-n$ for some $i\ne j$.  A $B_n$-subgraph whose edges use at most two colors (the colors in the base graph) can then be formed with a spine joining a vertex in $X_i$ to a vertex in $X_j$, and with pages given by $n$ of the vertices in $\mathfrak{X}^2\setminus \{X_i, X_j\}$ (since $|\mathfrak{X}^2\setminus \{X_i, X_j\}|\ge m-(m-n)=n$).
\end{proof}

\begin{lemma}\label{lem:no-four}
Let $m,n\in \mathbb{N}$ such that $n\ge 3$ and $m\ge \frac{5n}{3}$.  Then every Gallai $3$-coloring of $K_m$, whose base graph has order $4$, contains a $B_n$-subgraph whose edges use at most two colors. 	
\end{lemma}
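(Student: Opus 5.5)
Let $X_1,X_2,X_3,X_4$ be the blocks, with $a_i=|X_i|$ ordered so that $a_1\ge a_2\ge a_3\ge a_4\ge 1$ and $a_1+a_2+a_3+a_4=m\ge 5n/3$. Say the base graph uses red and blue. Since it has order $4$ and we may take it of minimal order, both colors appear; otherwise the Gallai partition could be coarsened. All edges between blocks are red or blue. The only source of a third color is therefore green edges inside blocks.

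The basic tool is the argument in the proof of Lemma \ref{RemoveLargeBaseGraph}: a spine $xy$ with $x\in X_i$, $y\in X_j$ ($i\ne j$), together with pages taken from the other two blocks, gives a red/blue $B_n$. So we are done if $a_1+a_2\ge n$, using a spine between $X_3$ and $X_4$. From now on assume $a_1+a_2\le n-1$. Then $a_3+a_4\ge m-n+1\ge \frac{2n}{3}+1$. Since $a_3,a_4\le a_2$, every block has fewer than $n$ vertices, and all four blocks are fairly large: $a_2\ge (a_3+a_4)/2>n/3$. We also have $a_1+a_2\ge m/2\ge 5n/6$.

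Next, exploit the interior of a block. Suppose some block $X_i$ with $a_i\ge 2$ contains a red or blue edge $xy$. Use it as a spine. All $m-a_i$ vertices outside $X_i$ are pages in red/blue. Further pages come from vertices $z\in X_i$ with $xz,yz$ non-green. If $m-a_i\ge n$, which holds whenever $a_i\le m-n$, we are done. Since $a_i<n$ and $m-n\ge 2n/3$, this covers every block except possibly a largest block with $a_1>m-n$. So we may assume that every block other than (possibly) $X_1$ is entirely green inside.

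Now use two-color sets involving green. Let $X_i,X_j$ be two blocks whose base edge has color $c\in\{\text{red},\text{blue}\}$, with both blocks green inside. Take a spine $xy$, $x\in X_i$, $y\in X_j$. Every $z\in (X_i\cup X_j)\setminus\{x,y\}$ is a page in colors $\{c,\text{green}\}$. Any block $X_\ell$ joined to both $X_i$ and $X_j$ in color $c$ also contributes pages. So it suffices to find a color $c$ and a pair of blocks $i,j$ for which $a_i+a_j-2$ plus the sizes of the blocks joined to both by $c$ is at least $n$.

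Then run through the finitely many $2$-colorings of $K_4$ on the blocks, up to isomorphism. In each, pick the pair and color maximizing this count. Compare it against $a_1+a_2\le n-1$, $a_3+a_4\ge m-n+1$, $m\ge 5n/3$, and $n\ge 3$. For example, if a vertex $X_\ell$ of the base graph has all three incident edges colored $c$, then pairing $X_\ell$ with any other block $X_j$ picks up the remaining blocks only when they are also $c$-joined to $X_j$. When this fails, the complementary color forms a triangle or path among the other blocks, which is then used instead.

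The leftover possibility is that $X_1$ contains red/blue edges but is too large to use as a spine block. Handle it by choosing the spine inside $X_1$ in a color $c$ matching the base edges to two of the other blocks, and restricting pages to blocks joined in $c$ plus $c$/green neighbors inside.

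I expect the main obstacle to be this final base-graph case analysis. For each of the small number of $2$-colored $K_4$'s (path-type, star-type, and $C_4$/matching-type color classes), one has to verify that some pair of blocks plus the commonly $c$-joined blocks totals at least $n+2$ vertices. This must use only $m\ge 5n/3$, which is exactly where the constant $5/3$ should come out as tight. My first attempt would be the $C_4$-plus-perfect-matching coloring, where no block is commonly joined in one color to a pair. There the bound reduces to $a_i+a_j-2\ge n$ for a matched pair, and I would check whether the averaging over the two matchings forces $5/3$.
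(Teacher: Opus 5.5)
There is a genuine gap, and it lies in the part you left unexecuted. For blocks that are green inside, your tool is a spine $xy$ running between two blocks $X_i,X_j$ joined in color $c$. The only extra page blocks it allows are those joined to both $X_i$ and $X_j$ in $c$, and such a block forms a $c$-colored triangle with $X_i,X_j$ in the base graph. Two $2$-colorings of $K_4$ have no monochromatic triangle: a $P_4$ with its complementary $P_4$, and a $C_4$ with a complementary perfect matching. For these, your count is $a_i+a_j-2\le a_1+a_2-2\le n-3$ for every pair, because you have already assumed $a_1+a_2\le n-1$. No choice of pair, and no averaging over the two matchings, can reach $n$.

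Worse, the $P_4$/$P_4$ coloring is the only $2$-coloring of $K_4$ in which both color classes are connected. If one class is disconnected, one of its components is joined to the rest in a single color, giving a base graph of order $2$. So $P_4$/$P_4$ is the only order-$4$ base graph compatible with the minimality you invoked, and your method fails in precisely the case that actually occurs. Note also that the lemma does not assume minimality; the paper's argument works for an arbitrary $2$-colored $K_4$ as base.

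The missing idea is to put the spine \emph{inside} a green block. Among $X_2,X_3,X_4$, some block $X_\ell$ is joined to the other two, $X_j$ and $X_k$, in a single color $c$, since every $2$-colored triangle has such a vertex. A green edge $xy$ in $X_\ell$ is then the spine of a book in colors $c$ and green, with pages $(X_\ell\setminus\{x,y\})\cup X_j\cup X_k$. That gives $a_2+a_3+a_4-2$ pages.

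Here $a_\ell\ge 2$ is automatic, since $a_4\ge (a_3+a_4)-a_2\ge \frac{2n}{3}+1-\frac{n-1}{2}>1$. What remains to show is $a_2+a_3+a_4\ge n+2$, and this already follows from your own bounds: $a_2\ge \frac{a_3+a_4}{2}$ and $a_3+a_4\ge \frac{2n}{3}+1$ give $a_2+a_3+a_4\ge n+\frac{3}{2}$. The same inequality gives $m-a_1\ge n+2$. So your ``leftover'' case of a red/blue edge inside $X_1$ is vacuous: a red/blue edge inside any block is a spine for a red/blue $B_n$.

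This is exactly the paper's route. It shows that $m\ge \frac{5n}{3}$ forces $k_1+k_2\ge n$ or $k_2+k_3+k_4\ge n+2$; this dichotomy is where the constant $\frac{5}{3}$ enters, not any case analysis of base colorings. The first alternative is handled by a spine between $X_3$ and $X_4$. In the second, the paper either uses a red/blue edge inside some block as the spine, or uses the in-block green spine described above.
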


\begin{proof}  For $n\ge 3$, consider a Gallai $3$-coloring of $K_m$, with $m\ge \frac{5n}{3}$, in which the base graph $\mathcal{B}$ has order $4$.  Without loss of generality, suppose that the edges in $\mathcal{B}$ use the colors red and blue.  Denote the vertex sets for the blocks by $X_1$, $X_2$, $X_3$, and $X_4$, with $k_i:=|X_i|$ and indexing such that $k_1 \geq k_2\geq k_3 \geq k_4$.  We claim that either $$k_1 + k_2 \geq n \quad \mbox{or} \quad k_2 + k_3 + k_4 \geq n+2.$$ To see that this is true, suppose that $$k_1 + k_2 \le n-1 \quad \mbox{and} \quad k_2 + k_3 + k_4 \le n+1.$$   Adding these inequalities together yields
$$\frac{5n}{3}+ k_2 \le (k_1 + k_2) + (k_2 + k_3 + k_4) \leq (n-1) + (n+1) = 2n,$$
which implies that
\begin{equation}
\label{eqn:b2UB}
k_2 \leq \frac{n}{3}.
\end{equation}

On the other hand, since $k_2 \geq k_3 \geq k_4$, we have $k_3 + k_4 \leq 2 k_2$.  Moreover, since $k_1 + k_2 \le n-1$, we have that $k_1 \leq n-1-k_2$.  Hence,
\begin{equation}
\label{eqn:mUB}
\frac{5n}{3} \le k_1 + k_2 + k_3 + k_4 \leq (n-1-k_2) + k_2 + 2k_2 = n-1+2k_2.
\end{equation}
Combining Inequalities \eqref{eqn:b2UB} and \eqref{eqn:mUB} implies that
$$\frac{5n}{3} \leq n-1 + 2k_2 \leq n - 1 + 2 \left(\frac{n}{3}\right) = \frac{5n}{3} - 1,$$ which is 
a contradiction.  Thus, one of $$k_1 + k_2 \geq n \quad \mbox{or} \quad k_2 + k_3 + k_4 \geq n+2$$ must hold. 

\underline{Case 1:} Assume that $k_1 + k_2 \geq n$.  Since $k_3 \geq k_4 \geq 1$, taking any edge joining $X_3$ and $X_4$ to be the spine yields a red/blue book with at least $n$ pages (coming from $X_1\cup X_2$). 

\underline{Case 2:} Assume that $k_2 + k_3 + k_4 \geq n+2$.  Suppose that for some $i\in \{1 ,2 ,3, 4\}$, the subgraph induced by $X_i$ contains a red or blue edge.  Using such an edge as the spine, we obtain a red/blue book with at least $n$ pages (coming from $\bigcup_{j\ne i} X_j$).  Here, we are using the observation that $$\mathop{\sum}\limits_{j\ne i} k_j \ge k_2+k_3+k_4\ge n+2.$$  If no such edge exists, then every block is a green complete subgraph.  The $K_3$-subgraph in $\mathcal{B}$ corresponding with the blocks $X_2$, $X_3$, and $X_4$ has two edges that receive the same color.  Without loss of generality, suppose that the edges joining $X_4$ to $X_2\cup X_3$ are all red.  Then selecting any edge $xy$ in the subgraph induced by $X_4$ as the spine, a red/green book can be formed with pages $X_2\cup X_3\cup (X_4\setminus \{x,y\})$.  Such a book contains at least $n$ pages.
\end{proof}

\begin{lemma}\label{mainlem2}
Let $m,n\in \mathbb{N}$ such that $n\ge 6$ and $m\ge \frac{5n-1}{3}$.  Then every Gallai $3$-coloring of $K_m$ in which there exists a vertex $x$ that is only incident with edges in one color, and whose vertex set with $x$ deleted has a base graph of minimal order $4$, contains a $B_n$-subgraph whose edges use at most two colors. 	
\end{lemma}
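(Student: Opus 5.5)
Let $x$ be the special vertex, all of whose incident edges receive a single color, say color $c$. Let $m' = m-1 \ge \frac{5n-4}{3}$ be the order of $K_m - x$. This subgraph has a Gallai partition with minimal base graph $\mathcal{B}$ of order $4$. Write $X_1,\dots,X_4$ for the blocks, with $k_1\ge k_2\ge k_3\ge k_4$ and $k_1+k_2+k_3+k_4=m-1$. Call the two colors of $\mathcal{B}$ red and blue; the third color is green. The plan is to rerun the proof of Lemma \ref{lem:no-four} on $K_m-x$. The bound there fails by exactly one vertex, and $x$ is used to recover that lost slack.

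The first step is the counting dichotomy. Assume
$$k_1+k_2\le n-1 \quad\text{and}\quad k_2+k_3+k_4\le n+1.$$
Adding gives $k_2\le 2n-(m-1)\le \frac{n+1}{3}$. On the other hand,
$$m-1\le n-1+2k_2,$$
so $k_2\ge \frac{m-n}{2}\ge \frac{2n-1}{6}$. These are not contradictory, so the lemma's argument does not close directly. The realistic plan is therefore a three-way split:
\begin{enumerate}
\item $k_1+k_2\ge n$;
\item $k_2+k_3+k_4\ge n+2$;
\item the boundary configurations allowed by the two inequalities above, namely $k_2\approx n/3$, $k_1\approx 2n/3$, and $k_3,k_4$ close to $k_2$.
\end{enumerate}

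Cases (1) and (2) go exactly as in Lemma \ref{lem:no-four}. In case (1), take an edge between $X_3$ and $X_4$ as spine and use $X_1\cup X_2$ as pages. In case (2), use any red or blue edge inside a block, or, if every block is green, the green-block argument. In case (2), $x$ only adds pages when $c$ is one of the two colors used by the book; otherwise simply ignore $x$.

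Case (3) is where $x$ matters, and I expect it to be the main obstacle. If $c$ is red or blue, add $x$ to the page set in the case (1) construction. The book then has $k_1+k_2+1\ge n$ pages as soon as $k_1+k_2=n-1$, and all its edges stay within red and blue. So the remaining trouble is when $k_1+k_2\le n-2$, or when $c$ is green.

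If $c$ is green, I would use $x$ as a spine endpoint. Take $xy$ with $y\in X_i$. The pages are the vertices $z$ for which $yz$ has a color among $\{$green, $f(yz)\}$, which gives a two-colored book. Concretely, pick the block $X_i$ and a base color, say red, such that the blocks joined to $X_i$ in red together with green neighbors of $y$ inside $X_i$ number at least $n$. Using the minimality of $\mathcal{B}$ (Lemma \ref{not3} style: both red and blue graphs of $\mathcal{B}$ are connected, so $\mathcal{B}$ is a path $P_4$ in each color), each block is joined in one color to at least two other blocks. I would then do a short case check on the structure of the $P_4$ and the sizes $k_i$. Here $n\ge 6$ is used so that the integrality constraints $k_2\le\lfloor\frac{n+1}{3}\rfloor$ and $k_1\le n-1-k_2$ force $k_1+k_3+k_4$ or a similar pair-complement sum to reach $n$.

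The step I would attack first is this last one: enumerating the possible 2-colorings of $\mathcal{B}$ on four vertices that are minimal (both color classes spanning paths) and checking that a red/green or blue/green book through $x$, or a red/blue book with $x$ as a page, always reaches $n$ pages in the boundary size range.
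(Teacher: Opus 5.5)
You have not proved the lemma. Your reduction to Lemma~\ref{lem:no-four} covers only the easy configurations, and case (3), which is where the lemma has content, is announced but not carried out.

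\textbf{Base color for $x$.} When $c$ is red or blue, your only tool is to add $x$ as a page to the case (1) book, and you explicitly leave $k_1+k_2\le n-2$ open. This configuration does occur. Take $n=8$, $m=13$ and four blocks of order $3$. Then $k_1+k_2=6$ and $k_2+k_3+k_4=9<n+2$, so cases (1) and (2) fail, and $X_1\cup X_2\cup\{x\}$ gives only $7$ pages. The missing idea is to put $x$ on the spine. Take $y\in X_4$ and pages $X_1\cup X_2\cup X_3$. Every edge is then red or blue, and there are $m-1-k_4\ge\frac34(m-1)\ge n$ pages. This settles the whole case $c\in\{\text{red},\text{blue}\}$ with no size analysis. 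Separately, your preliminary computation is off: the two assumed inequalities give $k_2\le 2n-m+1\le\frac{n+4}{3}$, not $\frac{n+1}{3}$. So the integrality constraints you planned to exploit are not the right ones.

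\textbf{Green $x$.} Here you need a different dichotomy from the one in Lemma~\ref{lem:no-four}: either $k_1+k_2\ge n$, or $k_1+k_i+k_j\ge n+1$ for \emph{all} distinct $i,j\in\{2,3,4\}$.

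To see this, suppose $k_1+k_2\le n-1$ and $k_1+k_i+k_j\le n$ for some pair. Adding $k_1+k_e\le n-1$ for the remaining index $e$ gives $k_1\le 2n-m\le\frac{n+1}{3}$. Also $k_2+k_3+k_4\le k_1+k_i+k_j\le n$. Hence $m-1\le\frac{4n+1}{3}$, which contradicts $m\ge\frac{5n-1}{3}$ once $n\ge 6$. This is exactly where the hypothesis $n\ge6$ is used.

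With this dichotomy the argument finishes as follows.
\begin{enumerate}
\item A red or blue edge inside $X_i$, $i\ge 2$, is the spine of a red/blue book on the other three blocks, with at least $n+1$ pages. So you may assume $X_2,X_3,X_4$ are green cliques.
\item Suppose that for some $i\ge 2$ the color joining $X_i$ to $X_1$ agrees with the color joining $X_i$ to some $X_j$. Take spine $xy$ with $y\in X_i$ and pages $X_1\cup X_j\cup(X_i\setminus\{y\})$. This book uses green and one base color and has $k_1+k_i+k_j-1\ge n$ pages.
\item Your $P_4$ observation guarantees such an $i$ exists. In the red $P_4$, each block has a unique ``odd'' neighbor, namely its only neighbor in its minority color. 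This assignment is a bijection on the four blocks. So at most one of $X_2,X_3,X_4$ has $X_1$ as its odd neighbor, and any other has its $X_1$-color matched by some $X_j$. The paper phrases this step as a direct contradiction with the minimality of $\mathcal{B}$.
\end{enumerate}

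Without the correct counting inequality and the choice of a block whose agreeing pair contains $X_1$, your planned ``short case check'' has nothing that guarantees $n$ pages.
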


\begin{proof}  Assume that $n\ge 6$.  It suffices to prove the lemma for the case where $m=\ceil{\frac{5n-1}{3}}$, since for any $N\in \mathbb{N}$ satisfying $N\ge m$, we know that $K_N$ contains $K_m$ as a subgraph.  Consider a Gallai $3$-coloring of $K_m$, in which there exists a vertex $x$ that is only incident with red edges, and whose vertex set with $x$ deleted has a base graph of minimal order $4$ (i.e., it is not possible to describe the Gallai $3$-coloring of the $K_{m-1}$ with a base graph of order less than $4$).  Denote the base graph of this $K_{m-1}$ by $\mathcal{B}$ and denote the vertex sets for its blocks by $X_1$, $X_2$, $X_3$, and $X_4$, with $k_i:=|X_i|$ and indexing such that $k_1 \geq k_2\geq k_3 \geq k_4$.  We consider two cases, based on whether or not the edges in $\mathcal{B}$ include the color red.

\underline{Case 1:} Suppose that the edges in $\mathcal{B}$ are red and another color (say, blue).  Since $X_4$ is the block with least order, it follows that $$k_4\le \floor[\bigg]{\frac{m-1}{4}}\le \floor[\bigg]{\frac{5n-1}{12}}.$$  Hence, \begin{align} k_1+k_2+k_3&\ge \frac{5n-1}{3}-1-\floor[\bigg]{\frac{5n-1}{12}} \notag \\
&\ge \frac{20n-4}{12}-\left( \frac{5n-1}{12}+1\right) \notag \\ &=\frac{15n-15}{12},\notag\end{align}
and this last expression is greater than or equal to $n$ if and only if $n\ge 5$, which is assumed to be true.  In this case, a red/blue $B_n$ can be formed with spine $xy$, where $y\in X_4$, and $n$ pages in $X_1\cup X_2\cup X_3$.

\underline{Case 2:} Suppose that the edges in $\mathcal{B}$ are blue and green.
We claim that $k_1 + k_2 \geq n$ or $k_1 + k_i + k_j \geq n+1, \mbox{for all distinct } i,j\in \{2, 3, 4\}$. 
To see that this claim is true, suppose that $k_1 + k_2 \le n-1$ and $k_1 + k_i + k_j \le n$, for some distinct $i,j\in \{ 2, 3, 4\}$.   Since $k_1\ge k_2\ge k_3\ge k_4$, the first inequality implies that $k_1+k_i\le n-1$, for all $i\in \{2, 3, 4\}$.
Adding these inequalities together yields
$$\frac{5n-1}{3}-1+ k_1 \le (k_1 + k_k) + (k_1 + k_i + k_j) \leq (n-1) + n = 2n-1,$$
for some distinct $i,j,k\in \{2, 3, 4\}$.  This is equivalent to 
\begin{equation}
k_1 \le 2n-\frac{5n-1}{3}=\frac{n+1}{3}, \notag
\end{equation}
which along with
$$k_2+k_3+k_4\le k_1+k_3+k_4\le n,$$ leads to the inequality 
\begin{equation}
\frac{5n-1}{3}-1\le k_1+(k_2+k_3+k_4)\le \frac{n+1}{3}+n=\frac{4n+1}{3}.
\notag
\end{equation}
This is equivalent to $n\le 5$, contradicting the assumption that $n\ge 6$.  Therefore,  $k_1 + k_2 \geq n$ or $k_1 + k_i + k_j \geq n+1$, for all distinct $i,j\in \{2, 3, 4\}$.

\underline{Subcase 2.1:} Assume that $k_1 + k_2 \geq n$.  Since $k_3 \geq k_4 \geq 1$, taking any edge joining $X_3$ and $X_4$ to be the spine yields a blue/green book with at least $n$ pages (coming from $X_1\cup X_2$). 

\underline{Subcase 2.2:} Assume that $k_1 + k_i + k_j \geq n+1, \mbox{for all distinct } i,j\in \{2, 3, 4\}$.  If the subgraph induced by $X_2$ has a blue or green edge, then such an edge forms the spine of a blue/green book with at least $n+1$ pages given by $X_1\cup X_3\cup X_4$.  So, assume that all of the edges in the subgraph induced by $X_2$ are red.  Applying a similar argument to $X_3$ and $X_4$, we assume that the subgraphs induced by $X_3$ and by $X_4$ only contain red edges.

If the edges joining $X_2$ to $X_1$ are the same color as those joining $X_2$ to either $X_3$ or $X_4$ (say, $X_3$), then  a book can be formed whose edges use at most two colors (one of which is red) by using the edge $xy$, with $y\in X_2$ as the spine, and with pages $X_1\cup X_3 \cup (X_2 \setminus \{y\})$.  Here, we are using the assumption that $k_1+k_2+k_3\ge n+1$ to see that such a book has at least $n$ pages.  So, without loss of generality, suppose that the edges joining $X_2$ to $X_1$ are blue and the edges joining $X_2$ to $X_3 \cup X_4$ are green.  Using a similar argument for the set $X_3$, we can assume that the edges joining $X_3$ to $X_1$ are blue and edges joining $X_3$ to $X_4$ are green (see Figure \ref{Lemconstruct1}).
\begin{figure}[h!]
\centerline{
\includegraphics[width=0.5\textwidth]{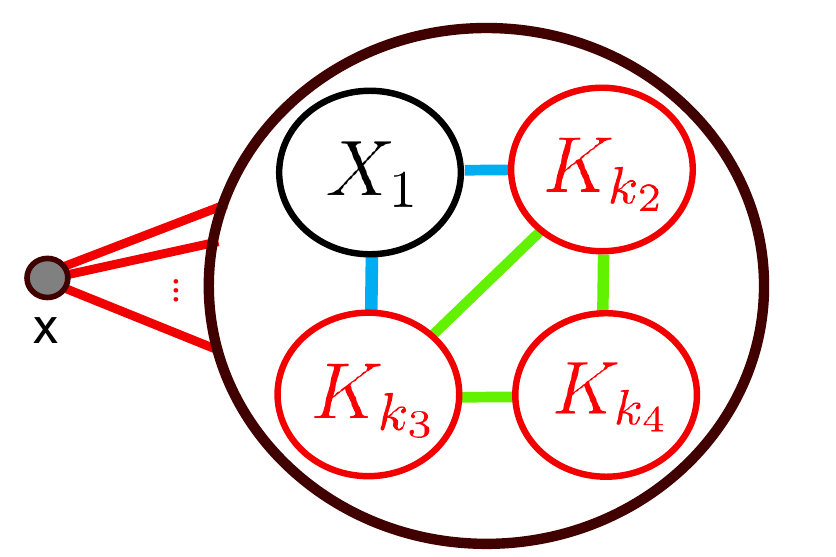}}
\caption{The Gallai $3$-coloring of $K_{m}$ corresponding to the construction in Subcase 2.2 of the proof of Lemma \ref{mainlem2}.}\label{Lemconstruct1}
\end{figure}
If the edges joining $X_1$ to $X_4$ are blue, then the $K_{m-1}$ can be formed with a base graph of order $2$ having blocks $X_1$ and $X_2\cup X_3\cup X_4$, contradicting the minimality assumption on $\mathcal{B}$.  Likewise, if the edges joining $X_1$ to $X_4$ are green, then the $K_{m-1}$ can be formed with a base graph of order $2$ having blocks $X_1\cup X_2\cup X_3$ and $X_4$, contradicting the minimality assumption on $\mathcal{B}$.  

It follows that all of the Gallai $3$-colorings considered in this lemma contain a $B_n$-subgraph whose edges use at most two colors.
\end{proof}

Now we turn our attention to proving two theorems that provide general lower bounds for $gr^t_2(B_n)$.

\begin{theorem}\label{thm:General-Lower-Bound}
For $t,n\in \mathbb{N}$ such that $t\ge 3$ and $n\ge 3$, $$gr^t_2(B_n)\ge \begin{cases} \frac{4n+6}{3} & \mbox{if $n\equiv 0\pmod{3}$} \\ \frac{5n+1}{3} & \mbox{if $n\equiv 1\pmod{3}$} \\ \frac{4n+7}{3} & \mbox{if $n\equiv 2\pmod{3}$.}\end{cases}$$
\end{theorem}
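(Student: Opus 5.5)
Since a Gallai $3$-coloring is also a Gallai $t$-coloring for every $t\ge 3$ (colorings need not be surjective), it suffices to build, for each residue of $n$ modulo $3$, a Gallai $3$-coloring of $K_{p-1}$ that contains no $B_n$ whose edges use at most two colors. Here $p$ is the claimed bound. Every construction will follow Theorem~\ref{Gallaistruct}: take a red/blue base graph, replace each of its vertices by a block, and color every block entirely green. Such a coloring is automatically Gallai. To certify that no two-colored $B_n$ exists, I will run through the possible positions of the spine and, in each case, count the largest possible page set.

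\emph{Template analysis.} Suppose the spine $xy$ lies inside a block $X_i$, so it is green. A page inside $X_i$ sees the spine in green/green. A page $z$ in another block $X_k$ sees both spine ends in the color $c(ik)$. Hence a two-colored book uses green together with a single base color $c$, and has at most $|X_i|-2+\sum_{k:\,c(ik)=c}|X_k|$ pages. Now suppose the spine joins two blocks $X_i$ and $X_j$ with base color $c(ij)$. If the book avoids green, it may use every vertex outside $X_i\cup X_j$, giving $m-|X_i|-|X_j|$ pages. If it uses green, the second color must be $c(ij)$. Then the pages lie in $X_i\cup X_j$ (minus the spine) together with those blocks $X_k$ satisfying $c(ik)=c(jk)=c(ij)$. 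In each residue class I must therefore choose the base graph and the block sizes so that all of these counts are at most $n-1$.

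\emph{Cases $n\equiv 0,2\pmod 3$.} I will take the base graph to be $K_4$, split into a red path $v_1v_2v_3v_4$ and a complementary blue path. Every vertex then has color-degrees $2$ and $1$, and no triangle is monochromatic. The binding constraint is the in-block spine: $|X_i|-2$ plus the sizes of the two blocks joined to $X_i$ in its majority color must be at most $n-1$. With all blocks of size about $(n+1)/3$, the between-block constraint $m-|X_i|-|X_j|\le n-1$ holds with room to spare. The green-plus-$c(ij)$ case is also small, because no triangle of the base graph is monochromatic. Four equal blocks give roughly $4(n+1)/3$ vertices. I will then tune the individual block sizes to reach exactly $(4n+3)/3$ when $n\equiv 0$ and $(4n+4)/3$ when $n\equiv 2$, by assigning sizes $a$ or $a+1$ so that each weighted in-block constraint stays at most $n-1$.

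\emph{Case $n\equiv 1\pmod 3$.} This case needs a denser base graph: $K_5$ split into a red $C_5$ and a blue $C_5$, so that every vertex has color-degrees $2$ and $2$. With five blocks of size $a=(n-1)/3$, the in-block constraint gives $3a-2\le n-1$ and the between-block constraint gives $3a\le n-1$, both fine. This yields $(5n-5)/3$ vertices, which is one short of the target $(5n-2)/3$. The main obstacle is gaining this extra vertex. I would first try enlarging one block to size $a+1$ and rechecking the constraints. If the between-block bound $m-|X_i|-|X_j|\le n-1$ breaks for the pair of smallest blocks, I would instead make some blocks internally red/blue rather than green, so that a spine inside such a block no longer combines with the neighboring blocks as cheaply. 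If neither adjustment works, I would allow a block whose interior is itself a small Gallai-colored structure that is two-colored in a way that blocks the problematic spines. Every such adjustment requires rechecking all the spine positions from the template analysis.
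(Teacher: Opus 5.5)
Your $n\equiv 2\pmod 3$ construction is exactly the paper's: four green blocks of size $(n+1)/3$ over the red/blue $P_4$ splitting of $K_4$. Your spine-position analysis is also correct. The genuine gap is the case $n\equiv 1\pmod 3$. You stop at $(5n-5)/3$ vertices and only list possible repairs, and the concrete ones cannot work.

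Write $n=3a+1$ and $m=5a+1$.
\begin{itemize}
\item Enlarging one block breaks the between-block count, as you feared. A spine joining two blocks of size $a$ has $3a+1>n-1$ red/blue pages.
\item More generally, for $a\ge 2$ no coloring of your template type (red/blue base graph, all-green blocks) on $m$ vertices avoids a two-colored $B_n$. The between-block count forces $|X_i|+|X_j|\ge 2a+1$ for every pair, which rules out five or more blocks. Combined with the in-block and green-plus-$c(ij)$ counts, it also rules out two, three or four blocks.
\item Making a block internally red/blue is worse. In your five-block layout such an edge is the spine of a red/blue book whose pages are the at least $4a$ vertices outside that block.
\end{itemize}

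The missing idea is to leave the template. Keep your five green blocks of size $a$ over the $C_5$ splitting of $K_5$, and adjoin one vertex $w$ joined to every other vertex by a green edge.
\begin{itemize}
\item Each triangle through $w$ has two green edges, so the coloring stays Gallai.
\item Since every edge at $w$ is green, $w$ is neither a page nor a spine endpoint of any red/blue book. Those books still have at most $3a=n-1$ pages.
\item Books containing green had slack: your in-block count was $3a-2=n-3$. The vertex $w$ adds at most one page to them, and a spine $wx$ gives $(a-1)+2a=n-2$ pages.
\end{itemize}
This is the paper's construction. The paper uses the same green apex vertex when $n\equiv 0\pmod 3$.

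There is also a failure at $n=3$. For $n\equiv 0\pmod 3$, your sizes $a,a,a,a+1$ with $a=n/3$ do work when $n\ge 6$. Let $X_l$ be the minority-color neighbor of the block containing the spine. The in-block count is then $m-2-|X_l|\le 3a-1$, and the between-block count is at most $2a+1\le 3a-1$. This is a valid alternative to the paper's four blocks of size $n/3$ plus a green apex.

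For $a=1$, however, the blocks have sizes $1,1,1,2$. A spine joining two singleton blocks then has the other three vertices as red/blue pages. So your coloring of $K_5$ contains a two-colored $B_3$ and does not give $gr^t_2(B_3)\ge 6$. Your claim that the between-block constraint holds with room to spare fails here, since $2a+1>3a-1$. The paper's version (four singleton blocks plus a green apex) covers $n=3$.
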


\begin{proof} Since every Gallai $3$-coloring of a complete graph is a Gallai $t$-coloring, it is sufficient to prove the inequality when $t=3$.  Consider the following cases.

\underline{Case 1:} Suppose that $n\equiv 0\pmod{3}$.  Let $m=\frac{4n+6}{3}$ and note that $4$ divides $m-2=\frac{4n}{3}$.  Consider the Gallai $3$-coloring of $K_{m-2}$ formed by first taking the $2$-colored $K_4$ in colors red and blue shown in Figure \ref{gen1.1}, and then replacing each of its vertices with green $K_{\frac{m-2}{4}}$-subgraphs.  Next, introduce one additional vertex and join it to the existing $K_{m-2}$ with green edges (see Figure \ref{gen1.1}).  
\begin{figure}[h!]
\centering
\begin{subfigure}{.27\textwidth}
    \centering
    \includegraphics[width=1\linewidth]{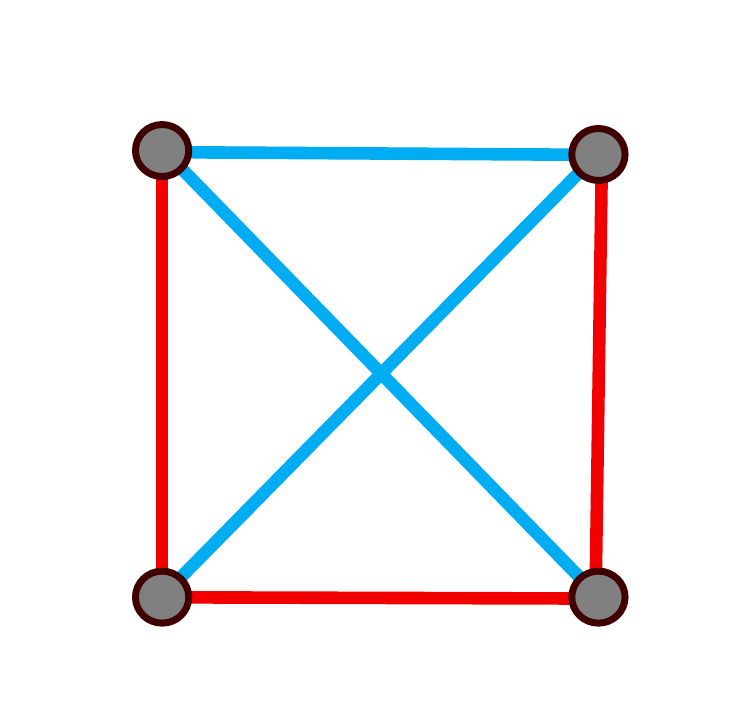}  
    \caption{}
    \label{gen1.1}
\end{subfigure}\qquad \qquad
\begin{subfigure}{.5\textwidth}
    \centering
    \includegraphics[width=1\linewidth]{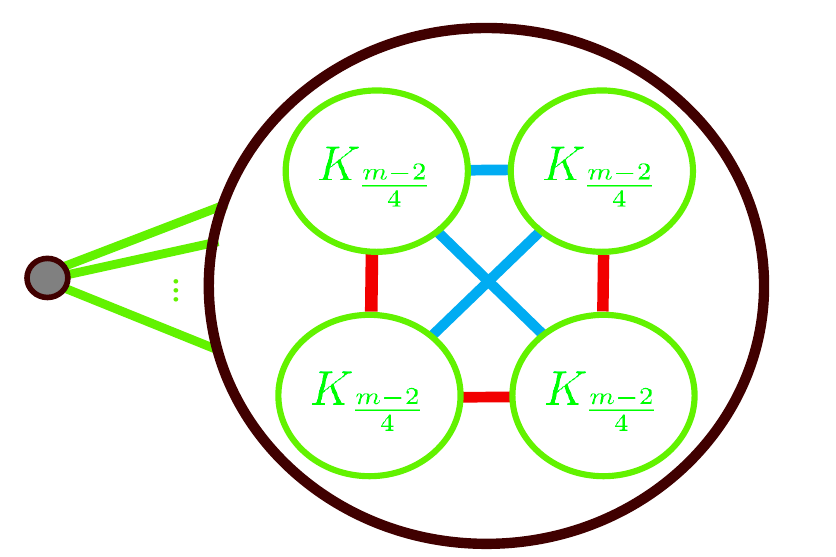}  
    \caption{}
    \label{gen1.2}
\end{subfigure}
\caption{Constructing a Gallai $3$-coloring of $K_{m-1}$ that avoids a $B_n$-subgraph whose edges use at most two colors, where $n\equiv 0\pmod{3}$.}\label{Case1lower}
\end{figure}
%
In the resulting $K_{m-1}$, the largest red/blue book has $\frac{m-2}{2}=\frac{4n}{6}<n$ pages, the largest red/green book has $\frac{3m-10}{4}=n-1$ pages, and the largest blue/green book has $\frac{3m-10}{4}=n-1$ pages.  It follows that $gr^t_2(B_n)\ge \frac{4n+6}{3}$.

\underline{Case 2:} Suppose that $n\equiv 1\pmod{3}$.  Let $m=\frac{5n+1}{3}$ and note that $5$ divides $m-2=\frac{5n-5}{3}$.  Consider the Gallai $3$-coloring of $K_{m-2}$ formed by first taking the $2$-colored $K_5$ in colors red and blue shown in Figure \ref{gen2.1}, and replacing each of its vertices with green $K_{\frac{m-2}{5}}$-subgraphs.  
\begin{figure}[h!]
\centering
\begin{subfigure}{.27\textwidth}
    \centering
    \includegraphics[width=1\linewidth]{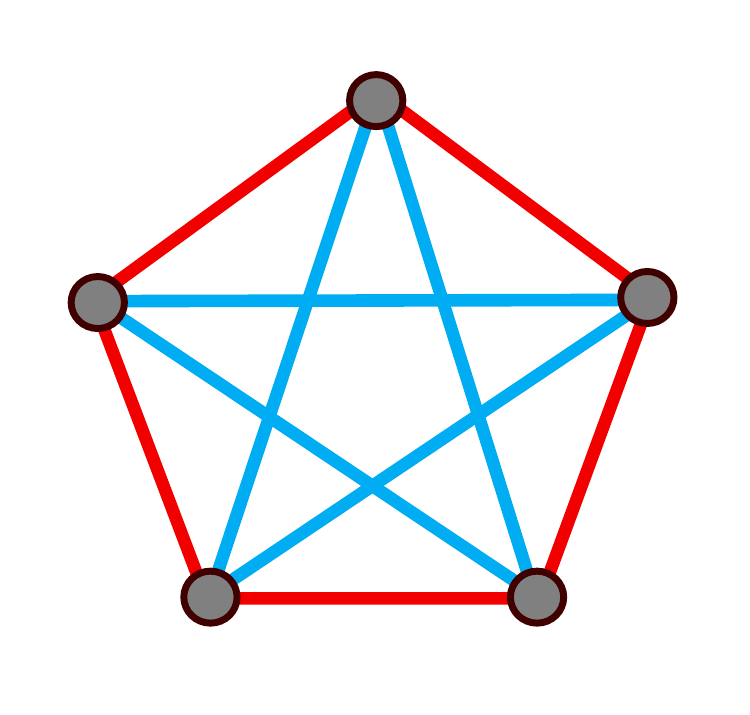}  
    \caption{}
    \label{gen2.1}
\end{subfigure}\qquad \qquad
\begin{subfigure}{.5\textwidth}
    \centering
    \includegraphics[width=1\linewidth]{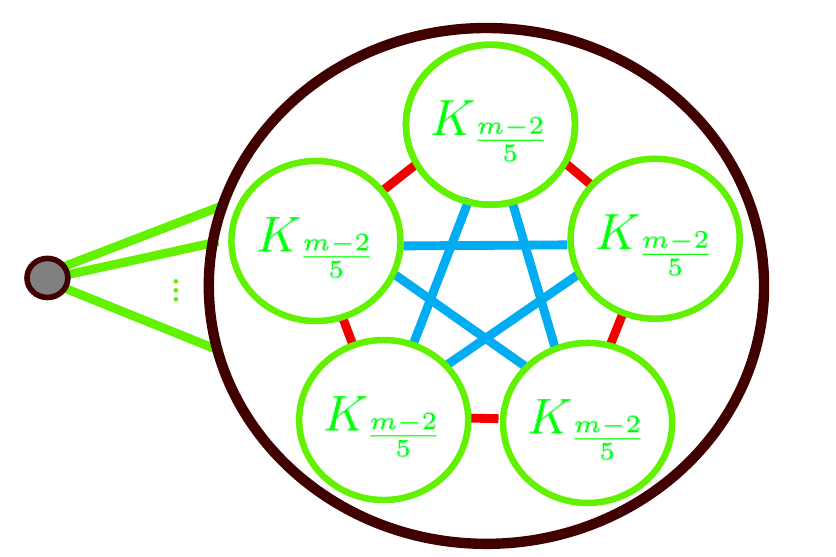}  
    \caption{}
    \label{gen2.2}
\end{subfigure}
\caption{Constructing a Gallai $3$-coloring of $K_{m-1}$ that avoids a $B_n$-subgraph whose edges use at most two colors, where $n\equiv 1\pmod{3}$.}\label{Case2lower}
\end{figure}  Next, introduce one additional vertex and join it to the existing $K_{m-2}$ with green edges (see Figure \ref{gen2.2}).  In the resulting $K_{m-1}$, the largest red/blue book has $\frac{3(m-2)}{5}=n-1$ pages, the largest red/green book has $\frac{3m-11}{5}=n-2$ pages, and the largest blue/green book has $\frac{3m-11}{5}=n-2$ pages.  It follows that $gr^t_2(B_n)\ge \frac{5n+1}{3}$.

\underline{Case 3:} Suppose that $n\equiv 2\pmod{3}$.  Let $m=\frac{4n+7}{3}$ and note that $4$ divides $m-1=\frac{4n+4}{3}$. Consider the Gallai $3$-coloring of $K_{m-1}$ formed by taking the $2$-colored $K_4$ in colors red and blue shown in Figure \ref{gen1.1}, and replacing each of its vertices with green $K_{\frac{m-1}{4}}$-subgraphs (see Figure \ref{Case3lower}).  
\begin{figure}[h!]
\centerline{
\includegraphics[width=0.6\textwidth]{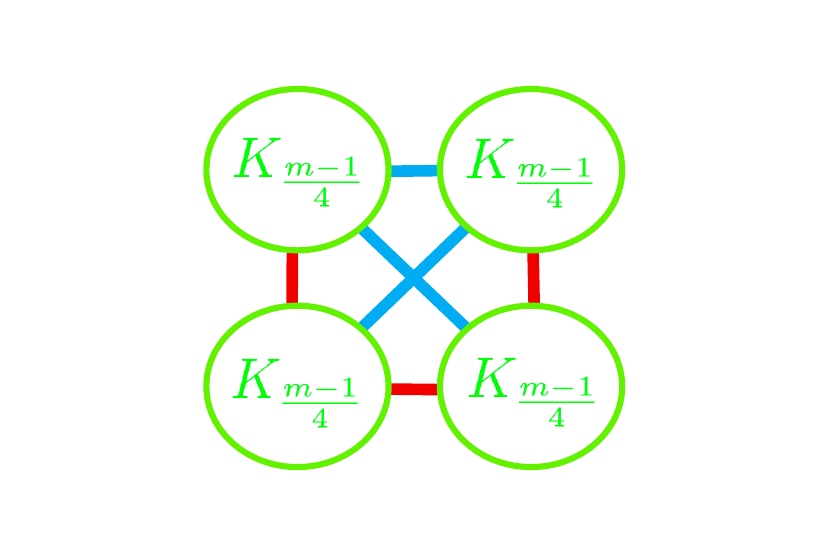}}
\caption{A Gallai $3$-coloring of $K_{m-1}$ that avoids a $B_n$-subgraph whose edges use at most two colors, where $n\equiv 2\pmod{3}$.}\label{Case3lower}
\end{figure}
In the resulting $K_{m-1}$, the largest red/blue book has $\frac{m-1}{2}=\frac{2(n+1)}{3}<n$ pages, the largest red/green book has $\frac{3m-11}{4}=n-1$ pages, and the largest blue/green book has $\frac{3m-11}{4}=n-1$ pages.  It follows that $gr^t_2(B_n)\ge \frac{4n+7}{3}$.
\end{proof}

Using known values of $2$-color Ramsey numbers for books, we obtain the following theorem. 

\begin{theorem}\label{critlower}
Let $n,t,k\in \mathbb{N}$ such that $n\ge 2$, $t\ge 3$, and $k< r(B_n, B_n)$.  If there exists a $2$-coloring of $K_{k}$ that avoids a monochromatic $B_n$ and a monochromatic $K_{1,\floor{\frac{n+k-1}{2}}+1}$, then $gr^t_2(B_{n+k})\ge 2k+1$.
\end{theorem}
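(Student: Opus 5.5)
The plan is to build an explicit Gallai $3$-coloring of $K_{2k}$ with no $B_{n+k}$-subgraph whose edges use at most two colors. This suffices because every Gallai $3$-coloring is a Gallai $t$-coloring for $t\ge 3$.

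Let $g$ be the given red/blue coloring of $K_k$. It has no monochromatic $B_n$, and every vertex has red degree and blue degree at most $s:=\floor{\frac{n+k-1}{2}}$, since there is no monochromatic $K_{1,s+1}$. Take two disjoint vertex sets $X$ and $Y$ with $|X|=|Y|=k$. Color the edges inside $X$ by a copy of $g$, color the edges inside $Y$ by another copy of $g$, and color every edge between $X$ and $Y$ green. This is a base graph of order $2$ (a green edge) whose two blocks are red/blue colorings, so by Theorem \ref{Gallaistruct} it is a Gallai coloring. Directly: a triangle inside one block uses only red and blue, and a triangle meeting both blocks has two green edges.

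Next I check each pair of colors. A page must be joined to both spine ends by edges of the two allowed colors.

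\emph{Red/blue books.} The spine cannot be green, so it lies inside one block, say $X$. Every edge from $X$ to $Y$ is green, so all pages lie in $X$. Hence there are at most $k-2<n+k$ pages.

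\emph{Red/green books.} The blue/green case is symmetric.

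If the spine is a red edge $xx'$ inside $X$, then every vertex of $Y$ is a valid page. A page $z\in X$ must have both $xz$ and $x'z$ red, since green does not occur inside $X$. So the pages in $X$, together with $xx'$, form a red book in $g$, and there are at most $n-1$ of them. This gives at most $k+n-1<n+k$ pages.

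If the spine is a green edge $xy$ with $x\in X$ and $y\in Y$, then a page $z\in X$ needs $zy$ green (automatic) and $xz$ red. So the pages in $X$ are red neighbors of $x$, and there are at most $s$ of them; likewise there are at most $s$ pages in $Y$. This gives at most $2s\le n+k-1$ pages.

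The step that needs care is the green-spine case. It is exactly where the star hypothesis with $\floor{\frac{n+k-1}{2}}+1$ leaves enters, and the bound $2\floor{\frac{n+k-1}{2}}\le n+k-1$ is what falls just short of $n+k$. The hypothesis $k<r(B_n,B_n)$ only guarantees that a coloring avoiding a monochromatic $B_n$ can exist on $k$ vertices, and it plays no further role. Since no two-colored $B_{n+k}$ appears, $gr^t_2(B_{n+k})\ge 2k+1$.
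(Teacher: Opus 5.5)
Your proposal is correct and follows the paper's proof. You join two copies of the given $2$-coloring of $K_k$ completely in a third color (the paper's construction with the colors renamed), and your case analysis gives the same page bounds: $k-2$, $n+k-1$, and $2\lfloor\frac{n+k-1}{2}\rfloor\le n+k-1$.
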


\begin{proof} Let $\mathcal{G}$ be a $2$-coloring of $K_k$ in blue and green that avoids a monochromatic $B_n$ and a monochromatic $K_{1,\floor{\frac{n+k-1}{2}}+1}$.   Next, take a red $K_2$ and replace each of its vertices with copies of $\mathcal{G}$.  In the resulting $K_{2k}$, the largest red/blue book with a red spine has $2\floor[\big]{\frac{n+k-1}{2}}\le n+k-1$ pages.  The largest red/blue book with a blue spine has $n-1+k$ pages.  Thus, no red/blue $B_{n+k}$ exists.  By a similar argument, no red/green $B_{n+k}$ exists.  The largest blue/green book has $k-2<n+k$ pages.  It follows that $gr^3_2(B_{n+k})> 2k$.  Since every Gallai $3$-coloring of a complete graph is a Gallai $t$-coloring, the stated inequality holds for all $t\ge 3$.
\end{proof}

\section{The Books $B_3$ and $B_4$}\label{B3andB4}

In this section, we will determine the values of $gr^t_2(B_3)$ and $gr^t_2(B_4)$, for all $t\ge 3$.  This extends the work of Jakhar and Moun \cite{JM}, where these numbers were determined when $t=3$.

\begin{theorem}\label{B3-t}
	For all $t\in \mathbb{N}$ such that $t \geq 3$, we have $gr_2^t(B_3) = 6$.
\end{theorem}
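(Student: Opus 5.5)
The plan is to prove the two inequalities separately. For the lower bound, Theorem \ref{thm:General-Lower-Bound} with $n=3$ (so $n\equiv 0 \pmod 3$) gives $gr^t_2(B_3)\ge \frac{4\cdot 3+6}{3}=6$, valid for every $t\ge 3$. Concretely, this is the $K_5$ built from a red/blue $K_4$ with single-vertex blocks plus one extra vertex joined in green; in a $K_5$ the only way to get a $B_3$ is to take all five vertices, so one can also check it directly.

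For the upper bound, I would show that every Gallai $t$-coloring of $K_6$ contains a $B_3$ using at most two colors. A $B_3$ in $K_6$ is a spine $uv$ together with three of the remaining four vertices. Apply Theorem \ref{Gallaistruct} with a base graph $\mathcal{B}$ of minimal order $k$; by Lemma \ref{not3}, $k=2$ or $k\ge 4$.

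If $k\ge 4$, use Lemma \ref{RemoveLargeBaseGraph} with $m=6$, $n=3$. Its condition $k>\frac{12}{4}=3$ holds automatically, so a two-colored $B_3$ exists. (The lemma is stated for $t\ge 3$ arbitrary, and its proof only uses the two base-graph colors.)

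If $k=2$, the vertex set splits into blocks $X_1,X_2$ with all edges between them red, and $|X_1|\ge 3$ after relabeling. I would argue by cases on the block sizes $(|X_1|,|X_2|)\in\{(5,1),(4,2),(3,3)\}$.

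\emph{Case $(3,3)$.} Take the spine $uv$ with $u\in X_1$ and $v\in X_2$. Each remaining vertex $w$ sends one red edge to the spine and one edge inside its own block. The four inside-block edges $uw$ ($w\in X_1\setminus\{u\}$) and $vw$ ($w\in X_2\setminus\{v\}$) need to use at most one non-red color among three chosen pages. Choosing $u$ and $v$ well, or switching to a spine inside a block with pages in the other block plus one more, should give the book via pigeonhole among the colors.

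\emph{Case $(4,2)$.} A spine inside $X_2$ has all four vertices of $X_1$ as pages with red edges, so if the edge of $X_2$ is red or any single color, we get a two-colored (indeed red plus one color) $B_4$, hence a $B_3$. So $(4,2)$ is immediate.

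\emph{Case $(5,1)$.} Let $x$ be the singleton. I would work inside the Gallai-colored $K_5$ on $X_1$, which again has a Gallai partition. If $X_1$ contains a vertex $u$ and two edges $uw_1,uw_2$ of one color $c$, then the spine $xu$ with pages $w_1,w_2$ plus any third $w_3$ needs $uw_3$ in $\{c,\text{red}\}$. So I need $u$ with three incident edges in $X_1$ using at most one non-red color, and I would get this from the Gallai structure of the $K_5$ on $X_1$. That $K_5$ has a base graph of order $2$ or at least $4$. If its order is at least $4$, the two base colors already give degree-counting room. If its order is $2$, recurse on its large block.

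The main obstacle is the $(5,1)$ case, and to a lesser extent $(3,3)$, because red may also appear inside the blocks and more than three colors are available. I would first try to find a vertex $u\in X_1$ whose four incident edges in $X_1$ use at most two colors, with a repeated color and red allowed. Failing that, I would look for a spine inside $X_1$ whose common neighbourhood is two-colored, using the fact that no rainbow triangles occur.
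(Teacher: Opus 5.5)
Your outline matches the paper's up to the case analysis: the lower bound from Theorem \ref{thm:General-Lower-Bound}, the reduction to $|V(\mathcal{B})|=2$ via Lemmas \ref{not3} and \ref{RemoveLargeBaseGraph}, and the $(4,2)$ case are all fine. However, the two cases you flag as obstacles are left unproved, and the routes you sketch for them do not work as stated. In the $(3,3)$ case a cross spine $uv$ can fail. If $X_1$ is a blue triangle and $X_2$ a green triangle, any three pages for $uv$ come from both blocks, so the book sees red, blue and green. The case is actually immediate by your $(4,2)$ argument: put the spine inside $X_2$ and use all three vertices of $X_1$ as pages. Then only red and the spine's color occur, and no ``plus one more'' page is needed.

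The genuine gap is in the $(5,1)$ case. Your reduction asks for $u\in X_1$ whose three edges to the chosen pages use at most one non-red color, and such a $u$ need not exist. Suppose the $K_5$ on $X_1$ is the blue/green pentagon coloring (the critical coloring for $r^2(B_1)$). Every vertex of $X_1$ then has exactly two blue and two green edges in $X_1$, so every spine $xu$ with three pages uses red, blue and green. That coloring has minimal base graph of order $5$, so it lies exactly in your ``order at least $4$'' subcase, where degree counting cannot produce $u$.

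The missing step is to find the book inside $X_1$ itself. Since $4>\frac{2\cdot 5}{5-3+1}$, Lemma \ref{RemoveLargeBaseGraph} with $m=5$ and $n=3$ yields a $B_3$ in $X_1$ on the two base colors whenever the minimal base graph of $X_1$ has order at least $4$. This is what the paper does; your fallback of a spine inside $X_1$ points this way but is never carried out. When that base graph has order $2$, no recursion is needed:
\begin{itemize}
\item for a $(3,2)$ split, take the spine in the $2$-block and the $3$-block as pages;
\item for a $(4,1)$ split with singleton $y$, the spine $xy$ with any three pages uses only red and the single color joining $y$ to the rest of $X_1$.
\end{itemize}
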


\begin{proof}
The lower bound $gr_2^t(B_3) \geq 6$ follows from Theorem~\ref{thm:General-Lower-Bound}.  
To prove the upper bound, consider a Gallai $t$-coloring of $K_6$ that avoids a $B_3$-subgraph whose edges use at most two colors.  Using Theorem \ref{Gallaistruct}, we let $\mathcal{B}$ be the base graph, chosen to have minimal order.   By Lemmas \ref{not3} and \ref{RemoveLargeBaseGraph}, it suffices to consider the case where $|V(\mathcal{B})|=2$.

By the pigeonhole principle, some block contains at least three vertices.  If both blocks have order at least $2$, then it is possible to select two vertices from one block to form the spine of a $B_3$-subgraph, and three vertices from the other block to form the pages.  Such a $B_3$-subgraph necessarily uses at most two colors on its edges.

So, assume that one block contains a single vertex $x$, and denote the vertex set of the larger block by $Y=\{y_1, y_2, y_3, y_4, y_5\}$.  Without loss of generality, suppose that the edges joining $x$ to $Y$ are all in color $1$ (red).  The subgraph induced by $Y$ is a Gallai $t$-coloring of $K_5$, and we assume that it avoids a $B_3$-subgraph whose edges use at most two colors.  Using Theorem \ref{Gallaistruct}, we let $\mathcal{B}'$ be its base graph, chosen to have minimal order.  By Lemmas \ref{not3} and \ref{RemoveLargeBaseGraph}, it suffices to consider the case where $|V(\mathcal{B'})|=2$.  If one block in $\mathcal{B}'$ has order two and the other has order three, then the vertices from the smaller block form the spine, and the vertices from the larger block form the pages, of a $B_3$-subgraph whose edges use at most two colors.  So, one block in $\mathcal{B}'$ has order $1$ and, without loss of generality, we assume it contains the vertex $y_1$.  Regardless of the color of the edges that join $y_1$ to $Y\setminus \{y_1\}$, the $B_3$-subgraph with spine $xy_1$ and pages $y_2$, $y_3$, and $y_4$ uses at most two colors.    It follows that $gr^t_2(B_3)\le 6$.
\end{proof}

\begin{theorem}\label{grB4}
For all $t\in \mathbb{N}$ such that $t\ge 3$, $$gr^t_2(B_4)=\begin{cases} 7 & \mbox{if $t=3$} \\ 8 & \mbox{if $t\ge 4$.}\end{cases}$$
\end{theorem}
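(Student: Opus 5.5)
The plan is to prove lower and upper bounds separately. For $t=3$ both bounds are essentially available already. The lower bound $gr^3_2(B_4)\ge 7$ is Theorem \ref{thm:General-Lower-Bound} with $n=4\equiv 1\pmod 3$, since $\frac{5\cdot 4+1}{3}=7$. The upper bound $gr^3_2(B_4)\le 7$ is the Jakhar--Moun result \cite{JM}; I would either cite it or re-prove it by the Gallai-partition argument described below. For $t\ge 4$, it suffices to construct a single Gallai $4$-coloring of $K_7$ with no $B_4$ using at most two colors, because a Gallai $4$-coloring is also a Gallai $t$-coloring. It then remains to prove $gr^t_2(B_4)\le 8$ for every $t\ge 4$.

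\textbf{Construction for $t\ge 4$.} Take a vertex $x$ joined to a $K_6$ entirely in color $4$, and color the $K_6$ with colors $1,2,3$ only. Any book using color $4$ must then involve $x$, either as a spine endpoint or as a page. If $xy$ is the spine, the four pages must all be joined to $y$ in a single color. So it suffices that every vertex of the $K_6$ has monochromatic degree at most $3$ in each color. If $x$ is a page, the remaining three pages and the spine lie in the $K_6$ and must be monochromatic. So the $K_6$ must have no monochromatic $B_3$. Finally, the $K_6$ itself must contain no $B_4$ using at most two of the colors $1,2,3$.

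My candidate for the $K_6$ is the red/blue pentagon-type $K_5$ (red $C_5$, blue complementary $C_5$) with one vertex replaced by a green $K_2$. Every color degree is then at most $2$, and monochromatic triangles are absent, so there is no monochromatic $B_3$. A $2$-colored $B_4$ inside this $K_6$ would use all six vertices, and I would rule it out by a short check. There are two kinds of spine: the green edge, whose pages receive both red and blue from each endpoint and so need three colors, and a red or blue edge, where I would check the possible page sets directly. If this candidate fails, my fallback is to search over small blow-ups of $2$-colored $K_4$ and $K_5$ with green blocks.

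\textbf{Upper bound $8$.} Consider a Gallai $t$-coloring of $K_8$ and take a Gallai partition whose base graph has minimal order $k$. Lemma \ref{not3} excludes $k=3$. Lemma \ref{RemoveLargeBaseGraph} with $m=8$ and $n=4$ handles every $k>\frac{16}{5}$, that is, $k\ge 4$. This leaves $k=2$.
\begin{itemize}
\item If both blocks have order at least $2$, one block has order at least $4$. A spine inside the other block, together with four pages from this large block, gives a book using the spine color and the base color.
\item Otherwise one block is a single vertex $x$, joined to a block $Y$ of order $7$ in one color, say red. Apply the same reduction to the Gallai coloring on $Y$, now with $m=7$: Lemma \ref{RemoveLargeBaseGraph} handles $k>\frac{14}{4}$, so again only base order $2$ remains. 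If both blocks of $Y$ have order at least $2$, the block sizes $(2,5)$ or $(3,4)$ give a book exactly as before. If instead $Y=\{y_1\}\cup Y'$ with $|Y'|=6$, the spine $xy_1$ with four pages from $Y'$ uses only red and the color of the edges from $y_1$ to $Y'$.
\end{itemize}

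The main obstacle is the explicit $K_7$ construction for $t\ge 4$, and in particular verifying that no $2$-colored $B_4$ with a red or blue spine hides inside the $K_6$. The upper bound is routine given the lemmas.
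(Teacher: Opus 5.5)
Your upper bound for $t\ge 4$ and your handling of $t=3$ match the paper's proof. The gap is the lower bound $gr^t_2(B_4)\ge 8$ for $t\ge 4$: your candidate $K_6$ contains a red/blue $B_4$. Write its vertices as $a,b,1,2,3,4$, where $\{a,b\}$ is the green $K_2$ replacing vertex $0$ of the pentagon coloring (red edges $\{i,i+1\}$, blue edges $\{i,i+2\}$). Since $ab$ is the only green edge, every spine inside $\{1,2,3,4\}$ gives a two-colored book. For instance, the red spine $12$ with pages $a,b,3,4$ has red edges $1a,1b,23$ and blue edges $13,14,2a,2b,24$. The green edge $ab$ joins two pages, so it is not an edge of the book. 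Your claim that every color degree is at most $2$ is also false: vertex $1$ has red neighbors $a,b,2$. This is harmless, since degree $3$ suffices.

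Your fallback, a search over blow-ups of $2$-colored $K_4$ or $K_5$ with green blocks, cannot succeed either. A $B_4$ inside a $K_6$ uses all six vertices and consists exactly of the edges incident with a spine endpoint. So it uses at most two colors if and only if both spine endpoints are isolated in some color class. Hence each of the three colors may have at most one isolated vertex in the $K_6$. But any such blow-up of $K_6$ has at least two singleton blocks, and these are green-isolated.

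Your framework is sound: join $x$ in color $4$ to a Gallai $3$-colored $K_6$ with all color degrees at most $3$, no monochromatic $B_3$, and each color class missing at most one vertex. The blocks, however, must themselves be $2$-colored. For example, take triangles $\{p_1,p_2,p_3\}$ and $\{q_1,q_2,q_3\}$ with all nine edges between them red. Color $p_1p_2,p_1p_3,q_2q_3$ green and $p_2p_3,q_1q_2,q_1q_3$ blue. Then:
the coloring is Gallai;
every red degree is $3$ and every other color degree is at most $2$;
every color class is triangle-free;
the only green-isolated vertex is $q_1$ and the only blue-isolated vertex is $p_1$.
Adding $x$ gives a Gallai $4$-coloring of $K_7$ with no $B_4$ using at most two colors. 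This is the step the paper supplies with the explicit coloring in Figure~\ref{LowerB4}.

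Finally, if you want to re-prove $gr^3_2(B_4)\le 7$ rather than cite \cite{JM}, the same argument does not carry over directly. The inner step then has $m=6$, where Lemma~\ref{RemoveLargeBaseGraph} only excludes base graphs of order at least $5$. Base order $4$ therefore needs a separate argument that genuinely uses $t=3$.
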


\begin{proof}
The $t=3$ case was proved by Jakhar and Moun \cite{JM} (see also Theorem 3.29 of \cite{B2}).  So, assume that $t\ge 4$ and consider the Gallai $4$-coloring of $K_7$ given in Figure \ref{LowerB4}.  
\begin{figure}[h!]
\centerline{
\includegraphics[width=0.5\textwidth]{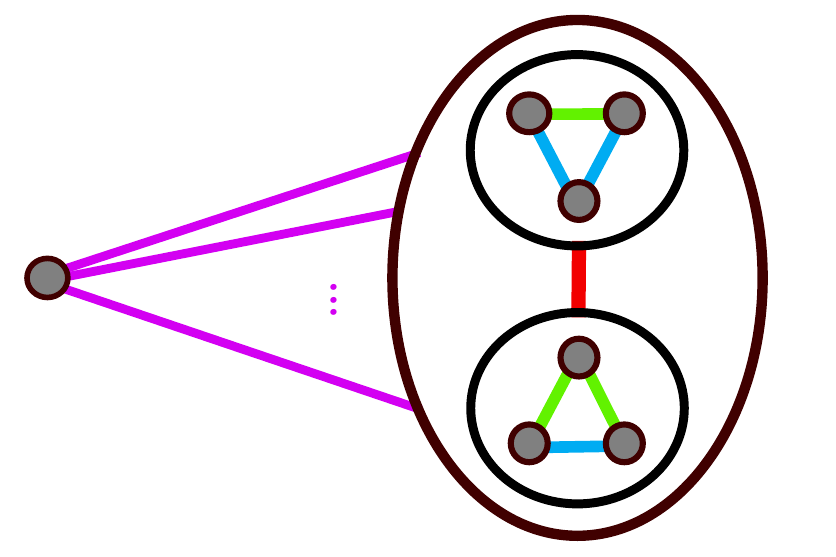}}
\caption{A Gallai $4$-coloring of $K_{7}$ that avoids a $B_4$ whose edges use at most two colors.}\label{LowerB4}
\end{figure}
Since it lacks a $B_4$-subgraph whose edges use at most two colors, it follows that $gr^t_2(B_4)\ge 8$, when $t\ge 4$.  
To prove the upper bound, consider a Gallai $t$-coloring of $K_8$ that avoids a $B_4$-subgraph whose edges use at most two colors.  Using Theorem \ref{Gallaistruct}, we let $\mathcal{B}$ be the base graph, chosen to have minimal order.   By Lemmas \ref{not3} and \ref{RemoveLargeBaseGraph}, it suffices to consider the case where $|V(\mathcal{B})|=2$.

By the pigeonhole principle, some block contains at least four vertices.  If both blocks have order at least $2$, then it is possible to select two vertices from one block to form the spine of a $B_4$-subgraph, and four vertices from the other block to form the pages.  Such a $B_4$-subgraph necessarily uses at most two colors on its edges.

So, assume that one block contains a single vertex $x$, and denote the vertex set of the larger block by $Y=\{y_1, y_2, y_3, y_4, y_5, y_6, y_7\}$.  Without loss of generality, suppose that the edges joining $x$ to $Y$ are all in color $1$ (red).  The subgraph induced by $Y$ is a Gallai $t$-coloring of $K_7$, and we assume that it avoids a $B_4$-subgraph whose edges use at most two colors.  Using Theorem \ref{Gallaistruct}, we let $\mathcal{B}'$ be its base graph, chosen to have minimal order.  By Lemmas \ref{not3} and \ref{RemoveLargeBaseGraph}, it suffices to consider the case where $|V(\mathcal{B'})|=2$.  

By the pigeonhole principle, some block in $\mathcal{B}'$ contains at least four vertices.  If both blocks in $\mathcal{B}'$ have order at least $2$, then it is possible to select two vertices from one block to form the spine of a $B_4$-subgraph, and four vertices from the other block to form the pages.  Such a $B_4$-subgraph necessarily uses at most two colors on its edges.
If one block contains only a single vertex, say $y_1$, then regardless of the color of the edges joining $y_1$ to $Y\setminus \{y_1\}$, the $B_4$-subgraph with spine $xy_1$ and pages $y_2$, $y_3$, $y_4$, and $y_5$ uses at most two colors.  Therefore, every Gallai $t$-coloring of $K_8$ contains a $B_4$-subgraph whose edges use at most two colors, and it follows that $gr^t_2(B_4)\le 8$, when $t\ge 4$.
\end{proof}

\section{The Books $B_n$, for $n\ge 5$}\label{main}

As we saw in Theorem \ref{grB4}, the value of $gr^t_2(B_4)$ is dependent upon the value of $t$.  For this reason, as we begin to consider larger books, we restrict our attention to the case $t=3$.  As we shall soon see, the lemmas and theorems in Section \ref{genboundsec} do most of the heavy lifting for us.  

\begin{theorem}
$gr^3_2(B_5)=9$.
\end{theorem}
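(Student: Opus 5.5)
The lower bound comes straight from Theorem~\ref{thm:General-Lower-Bound}. Since $5\equiv 2\pmod 3$, it gives $gr^3_2(B_5)\ge \frac{4\cdot 5+7}{3}=9$. So the work is the upper bound. I will take a Gallai $3$-coloring of $K_9$ and show that it contains a $B_5$ whose edges use at most two colors, following the template of Theorems~\ref{B3-t} and~\ref{grB4}.

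\emph{Outer Gallai partition.} Let $\mathcal{B}$ be a base graph of minimal order. With $m=9$ and $n=5$ we have $\frac{2m}{m-n+1}=3.6$, so Lemma~\ref{RemoveLargeBaseGraph} handles $|V(\mathcal{B})|\ge 4$, and Lemma~\ref{not3} excludes order $3$. Hence $|V(\mathcal{B})|=2$.
\begin{itemize}
\item If both blocks have order at least $2$, the larger block has at least $5$ vertices. Two vertices of the smaller block form the spine and five vertices of the larger block form the pages. This book uses at most two colors.
\item Otherwise one block is a single vertex $x$, whose edges to the other block $Y$ (with $|Y|=8$) are, say, all red.
\end{itemize}

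\emph{Inner partition of $Y$.} Let $\mathcal{B}'$ be a minimal base graph for the coloring on $Y$. Lemma~\ref{RemoveLargeBaseGraph} with $m=8$, $n=5$ gives the threshold $16/4=4$, so it handles $|V(\mathcal{B}')|\ge 5$, and Lemma~\ref{not3} excludes order $3$. Lemma~\ref{lem:no-four} does not apply, since $8<25/3$, so order $4$ must be treated by hand. This is the main obstacle.

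\emph{Case $|V(\mathcal{B}')|=4$.} Let the blocks be $X_1,\dots,X_4$.
\begin{itemize}
\item If red is a base color, take the spine $xy$ with $y$ in a smallest block, which has at most $2$ vertices. The pages are the at least $6$ vertices in the other blocks. The edges then use red and the other base color.
\item Now suppose the base colors are blue and green. If two blocks have total order at most $3$, a spine between them has at least $5$ pages in the remaining blocks. Otherwise every pair of blocks has total order at least $4$. With total $8$, this forces every block to have order $2$.
\item For each block $X_i$, if its internal edge is blue or green, it serves as a spine with $6$ pages in the other blocks.
\item If that edge is red, write $X_i=\{y,y'\}$ and take the spine $xy$. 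Among the three other blocks, two are joined to $X_i$ in the same color $c$, since only two base colors are available. Use those $4$ vertices together with $y'$ as pages. The edges then use only red and $c$, giving a $B_5$.
\end{itemize}

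\emph{Case $|V(\mathcal{B}')|=2$.}
\begin{itemize}
\item If one block is a single vertex $y_1$, take the spine $xy_1$ and five further vertices of $Y$ as pages. This uses only red and the color of the edges at $y_1$.
\item Otherwise the blocks $A$ and $B$ have sizes $(2,6)$, $(3,5)$ or $(4,4)$. The first two give a spine in $A$ and $5$ pages in $B$.
\item For $(4,4)$, let $c$ be the base color. If $A$ contains an edge $yz$ colored red or $c$, take the spine $xy$. The pages are $B$ together with $z$, giving $5$ pages in red and $c$.
\item Otherwise all edges inside $A$ have the third color $g$. Then any edge $yz$ of $A$ is a spine whose pages are the other two vertices of $A$ together with $B$, giving $6$ pages in colors $g$ and $c$.
\end{itemize}

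This exhausts all cases, so $gr^3_2(B_5)\le 9$, and hence $gr^3_2(B_5)=9$.
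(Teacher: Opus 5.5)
Your route is the paper's: an outer base graph of order $2$, reduction to a single vertex $x$ joined in red to an $8$-vertex block $Y$, then a second Gallai partition of $Y$. Most of your case analysis is correct.

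Your treatment of an inner base graph of order $4$ is in fact necessary. The paper dismisses that case with the inequality $\frac{2\cdot 10}{10-6+1}<4$, which appears to be carried over from the $B_6$ proof and is false, since the left side equals $4$. With the correct parameters $m=8$, $n=5$ one gets exactly $\frac{16}{4}=4$, so Lemma~\ref{RemoveLargeBaseGraph} only excludes order at least $5$. Neither Lemma~\ref{lem:no-four} ($8<\frac{25}{3}$) nor Lemma~\ref{mainlem2} ($n=5<6$) applies. Your argument for order $4$ is correct and closes a case the paper leaves open:
\begin{itemize}
\item if red is a base color, use the spine $xy$ with $y$ in a smallest block;
\item otherwise, either two blocks have at most $3$ vertices in total, or every block has order $2$ and the pigeonhole on the two base colors finishes.
\end{itemize}

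The one step that fails is the $(4,4)$ subcase when the inner base graph has order $2$. You let $c$ be the base color without normalizing it. You then conclude that if $A$ has no edge colored red or $c$, all its edges have ``the third color $g$''. That only makes sense when $c\neq$ red. If $c$ is red, $A$ can use both blue and green, for instance a blue perfect matching together with a green $4$-cycle. This is still a Gallai coloring in that configuration. Then every choice of spine $yz$ in $A$, with the other two vertices of $A$ plus $B$ as pages, produces three colors, so your final book does not exist as described.

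The repair is immediate: when $c$ is red, take any edge of $A$ as the spine and use $B\cup\{x\}$ as the five pages. Every spine-to-page edge is then red. This is essentially how the paper handles its Case~1, after assuming without loss of generality that the base color is red or blue. With this patch your proof is complete.
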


\begin{proof}
The lower bound $gr^t_2(B_5)\ge 9$ follows from Theorem \ref{thm:General-Lower-Bound}. To prove the reverse inequality, consider a Gallai $3$-coloring of $K_9$, with base graph $\mathcal{B}$, chosen to have minimal order. Since 
\[
\frac{2 \cdot 9}{9 - 5 + 1} < 4,
\]
then by Theorem \ref{Gallaistruct} and Lemmas \ref{not3} and \ref{RemoveLargeBaseGraph}, it suffices to consider when $|V(\mathcal{B})|=2$. Denote the vertex sets for the blocks by $X_1$ and $X_2$, with $k_i := |X_i|$ and the indexing such that $k_1 \geq k_2$. Without loss of generality, suppose that the edges joining $X_1$ and $X_2$ are red.  If $k_1 \geq 5$ and $k_2 \geq 2$, then we get a $B_5$ whose edges use at most two colors by choosing a spine in $X_2$ and any five vertices in $X_1$ as pages. Thus, it suffices to consider the case when $(k_1,k_2) = (8,1)$. Denote the single vertex in $X_2$ by $x$. Using Theorem~\ref{Gallaistruct}, let $\mathcal{B}'$ be the base graph for the subgraph induced by $X_1$, chosen to have minimal order. Since 
\[
\frac{2 \cdot 10}{10 - 6 + 1} < 4,
\]
then by Lemmas~\ref{not3} and \ref{RemoveLargeBaseGraph}, it suffices to consider $|V(\mathcal{B}')| = 2$. 
Without loss of generality, assume that the edge in $\mathcal{B}'$ is either red or blue.  Denote the vertex sets for the blocks in $\mathcal{B}'$ by $Y_1$ and $Y_2$, with $\ell_i := |Y_i|$ and the indexing such that $\ell_1 \geq \ell_2$. If $\ell_1 \geq 5$ and $\ell_2 \geq 2$, then a $B_5$ whose edges use at most two colors can be formed using an edge in $Y_2$ as a spine with any five vertices in $Y_1$ as pages. Thus, it suffices to consider the following cases. 

\underline{Case 1:} Assume that $(\ell_1,\ell_2) = (4,4)$. If there is a red or blue edge in $Y_2$, then a red/blue $B_{5}$ can be formed by using that edge as the spine and adding $x$ and the four vertices in $Y_1$ as pages. Hence, the subgraph induced by $Y_2$ is a green $K_4$, which contains a green $B_2$. Adding the four vertices in $Y_1$ as pages gives a $B_{6}$ (which contains a $B_5$) whose edges use at most two colors, one of which is green. 

\underline{Case 2:} Assume that $(\ell_1,\ell_2) = (7,1)$. Denote the single vertex in $Y_2$ by $y$. Then using the red edge $xy$ as a spine, we get a red/blue $B_7$ (which contains a $B_5$) by adding the vertices in $Y_1$ as pages.

In all cases, a $B_5$ whose edges use at most two colors exists, from which it follows that $gr_2^3(B_5) \le 9$.
\end{proof}

In order to determine $gr^3_2(B_6)$, we will need to use the (unique) critical coloring for $r^2(B_1)=6$ (e.g., see \cite{GY} and \cite{GG}).  The cycle shown in Figure \ref{GY1.1} is the Paley graph of order $5$, which is necessarily self-complementary.   
\begin{figure}[h!]
\centering
\begin{subfigure}{.37\textwidth}
    \centering
    \includegraphics[width=1\linewidth]{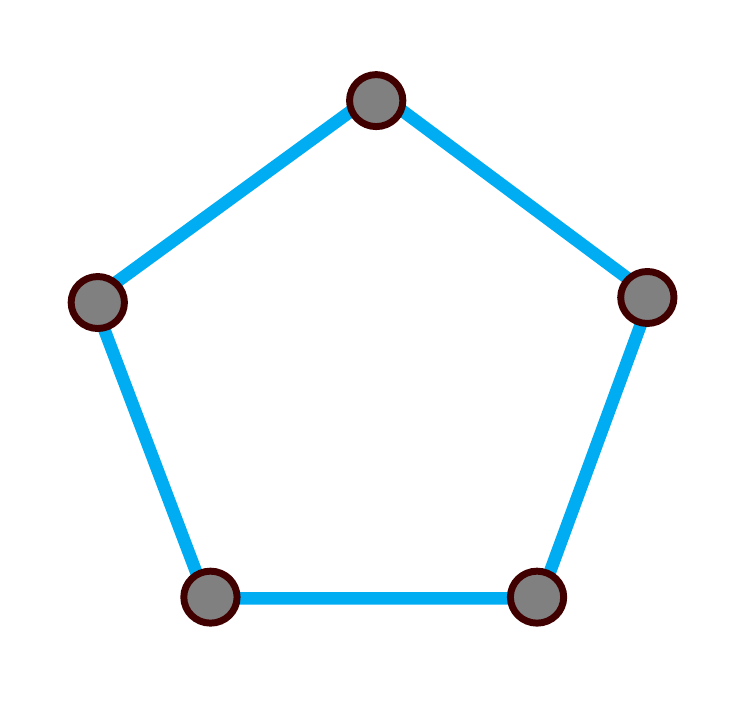}  
    \caption{}
    \label{GY1.1}
\end{subfigure}\qquad \qquad
\begin{subfigure}{.37\textwidth}
    \centering
    \includegraphics[width=1\linewidth]{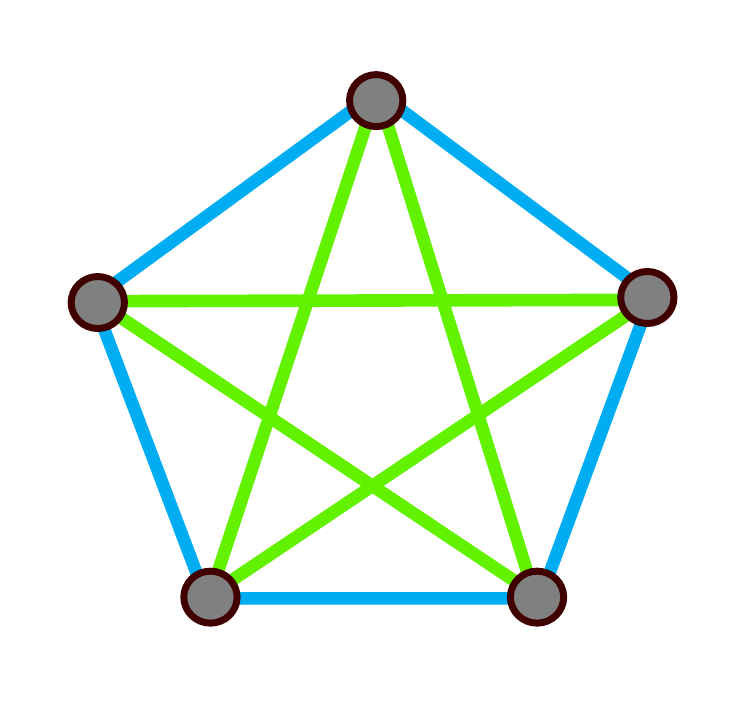}  
    \caption{}
    \label{GY1.2}
\end{subfigure}
\caption{The unique critical coloring for $r^2(B_1)=6$ (see \cite{GY} and \cite{GG}).}\label{GYlowerbound}
\end{figure}
Figure \ref{GY1.2} shows this graph and its complement in blue and green, which is the unique critical coloring for $r^2(B_1)$.


\begin{theorem}\label{thm:B6}
$gr_2^3(B_6)=11$.
\end{theorem}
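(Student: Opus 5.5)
We prove the two inequalities separately. The lower bound comes from Theorem~\ref{critlower}, and the upper bound from the Gallai-partition reduction used for $B_5$, where Lemma~\ref{mainlem2} handles the one case that Lemma~\ref{RemoveLargeBaseGraph} does not reach.

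\emph{Lower bound.} Apply Theorem~\ref{critlower} with $n=1$ and $k=5$, so that $n+k=6$. The condition $k<r(B_1,B_1)=6$ holds. For the $2$-coloring of $K_5$ we take the critical coloring of Figure~\ref{GY1.2}, formed by the Paley graph of order $5$ and its complement.
\begin{itemize}
\item It has no monochromatic triangle, i.e., no monochromatic $B_1$.
\item Each vertex has exactly two edges of each color. Hence there is no monochromatic $K_{1,\floor{\frac{1+5-1}{2}}+1}=K_{1,3}$.
\end{itemize}
Theorem~\ref{critlower} then gives $gr^3_2(B_6)\ge 2\cdot 5+1=11$.

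\emph{Upper bound: reducing to a single vertex.} Consider a Gallai $3$-coloring of $K_{11}$, and choose a base graph $\mathcal{B}$ of minimal order. Since $\frac{2\cdot 11}{11-6+1}=\frac{11}{3}<4$, Lemmas~\ref{not3} and \ref{RemoveLargeBaseGraph} let us assume $|V(\mathcal{B})|=2$. Let the blocks be $X_1,X_2$ with $k_1\ge k_2$, joined by red edges.
\begin{itemize}
\item If $k_2\ge 2$, then $k_1\ge 6$. A spine inside $X_2$ with six pages in $X_1$ gives a $B_6$ using at most two colors.
\item Otherwise $(k_1,k_2)=(10,1)$. Let $x$ be the single vertex; it is joined to $Y:=X_1$ only by red edges.
\end{itemize}
Now take a minimal base graph $\mathcal{B}'$ for the coloring induced on $Y$.
\begin{itemize}
\item If $|V(\mathcal{B}')|\ge 5$, Lemma~\ref{RemoveLargeBaseGraph} applies with $m=10$, $n=6$, since $k\ge 5>\frac{20}{5}=4$.
\item If $|V(\mathcal{B}')|=4$, this is exactly the situation of Lemma~\ref{mainlem2}, with $n=6$ and $m=11\ge\frac{29}{3}$.
\item $|V(\mathcal{B}')|=3$ is excluded by Lemma~\ref{not3}.
\end{itemize}
So we may assume $|V(\mathcal{B}')|=2$, with blocks $Y_1,Y_2$ of sizes $\ell_1\ge \ell_2$, joined by edges of color $c$. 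If $\ell_2\ge 2$ and $\ell_1\ge 6$, we finish as before. This leaves two cases: $(\ell_1,\ell_2)=(9,1)$ and $(5,5)$.

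\emph{Case $(\ell_1,\ell_2)=(9,1)$.} Let $y$ be the vertex of $Y_2$. The red spine $xy$ together with the nine pages in $Y_1$ forms a book using only red and $c$.

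\emph{Case $(\ell_1,\ell_2)=(5,5)$, which I expect to be the main obstacle.} Here no block is large enough by itself, so the extra vertex $x$ must be used as a page. Suppose some edge inside $Y_2$ is red or has color $c$. Using it as the spine, with pages $Y_1\cup\{x\}$, gives a $B_6$ colored only with red and $c$. The same argument applies with the roles of $Y_1$ and $Y_2$ swapped.

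\begin{itemize}
\item If $c$ is red, take any edge of $Y_2$ as the spine; it has some color $d$. The pages $Y_1\cup\{x\}$ are joined to the spine only by red edges. This gives a red/$d$ book with six pages.
\item If $c$ is not red, say blue, then by the above every edge inside $Y_1$ and inside $Y_2$ is green. Take a spine in $Y_2$. The other three vertices of $Y_2$ are joined to it by green edges, and the five vertices of $Y_1$ by blue edges. This gives a blue/green $B_8$, which contains a $B_6$.
\end{itemize}

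In every case a $B_6$ whose edges use at most two colors exists, so $gr^3_2(B_6)\le 11$.
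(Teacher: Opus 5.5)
Your proof is correct and follows the paper's argument essentially step for step. The lower bound is the same application of Theorem~\ref{critlower} with the pentagon coloring, and the upper bound uses the same two-level Gallai-partition reduction down to the cases $(\ell_1,\ell_2)=(9,1)$ and $(5,5)$. The deviations are cosmetic: you invoke Lemma~\ref{mainlem2} for the entire case $|V(\mathcal{B}')|=4$, which is legitimate since its statement does not restrict the base-graph colors (the paper instead handles the sub-case where $\mathcal{B}'$ uses red by hand), and, like the paper, you apply Theorem~\ref{critlower} with $n=1$ although it is stated for $n\ge 2$, which is harmless because its proof never uses that restriction.
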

\begin{proof}
The lower bound $gr_2^3(B_6) \geq 11$ follows from Theorem \ref{critlower} by letting $\mathcal{G}$ be the critical coloring for $r^2(B_1)$ in blue and green shown in Figure \ref{GY1.2}.
To prove the reverse inequality, consider a Gallai $3$-coloring of $K_{11}$, with base graph $\mathcal{B}$, chosen to have minimal order. Since
\[
\frac{2 \cdot 11}{11 - 6 + 1} < 4,
\]
then by Theorem \ref{Gallaistruct} and Lemmas \ref{not3} and  \ref{RemoveLargeBaseGraph} it suffices to consider the case where $|V(\mathcal{B})|=2$. Denote the vertex sets for the blocks by $X_1$ and $X_2$, with $k_i := |X_i|$ and the indexing such that $k_1 \geq k_2$. Without loss of generality, suppose the edges joining $X_1$ and $X_2$ are red. If $k_2 \geq 2$ and $k_1 \geq 6$, then we get a $B_6$ whose edges use at most two colors by choosing a spine in $X_2$ and any six vertices in $X_1$ as pages. Thus, it suffices to consider the case when $(k_1,k_2) = (10,1)$. Denote the single vertex in $X_2$ by $x$. Using Theorem~\ref{Gallaistruct}, let $\mathcal{B}'$ be the base graph for the subgraph induced by $X_1$, chosen to have minimal order. Since 
\[
\frac{2 \cdot 10}{10 - 6 + 1} < 5,
\]
then by Lemmas~\ref{not3} and \ref{RemoveLargeBaseGraph}, it suffices to consider the cases where $|V(\mathcal{B}')| \in \{2,4\}$. 

Let us first assume that $|V(\mathcal{B}')|=4$. If $\mathcal{B}'$ uses the two colors other than red, then since $11 \geq \frac{5 \cdot 6 - 1}{3}$ and $x$ is a vertex incident with edges in only red, Lemma~\ref{mainlem2} implies the existence of a $B_{6}$ whose edges use at most two colors. Hence, we may assume that one of the colors used in $\mathcal{B}'$ is red. Denote the vertex sets for the blocks in $\mathcal{B}'$ by $Y_1$, $Y_2$, $Y_3$, and  $Y_4$, with $\ell_i := |Y_i|$ and the indexing such that $\ell_1 \geq \ell_2 \geq \ell_3 \geq \ell_4$. Since $\ell_4 \leq \lfloor \frac{10}{4} \rfloor = 2$, it follows that $\ell_1 + \ell_2 + \ell_3 \geq 10 - 2 = 8$. Let $y$ be any vertex in $Y_4$. Taking the red edge $xy$ as a spine and adding $Y_1 \cup Y_2 \cup Y_3$ as pages, we get a book with at least eight pages whose edges use at most two colors, one of which is red.

Now suppose that $|V(\mathcal{B}')|=2$. 
Without loss of generality, assume that the edge in $\mathcal{B}'$ is red or blue.  Denote the vertex sets for the blocks in $\mathcal{B}'$ by $Y_1$ and $Y_2$, with $\ell_i := |Y_i|$ and the indexing such that $\ell_1 \geq \ell_2$. If $\ell_1 \geq 6$ and $\ell_2 \geq 2$, then a $B_6$ whose edges use at most two colors can be formed by using an edge in $Y_2$ as a spine with any six vertices in $Y_1$ as pages.  Thus, it suffices to consider the following cases. 

\underline{Case 1:} Assume that $(\ell_1,\ell_2) = (5,5)$. If there is a red or blue edge in $Y_2$, then a red/blue $B_{6}$ can be formed by adding $x$ and the five vertices in $Y_1$ as pages. Hence, the subgraph induced by $Y_2$ is a green $K_5$, which contains a green $B_3$. Adding the five vertices in $Y_1$ as pages gives a $B_{8}$ (which contains a $B_6$) whose edges use at most two colors, one of which is green. 

\underline{Case 2:} Assume that $(\ell_1,\ell_2) = (9,1)$. Denote the single vertex in $Y_2$ by $y$. Then using the red edge $xy$ as a spine, we get a red/blue $B_{9}$ (which contains a $B_6$) by adding the vertices in $Y_1$ as pages.

In all cases, a $B_6$ whose edges use at most two colors exists, from which it follows that $gr_2^3(B_6) \le 11$.
\end{proof}

\begin{theorem}
	$gr_2^3(B_7) = 12$.
\end{theorem}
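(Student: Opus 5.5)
The lower bound comes directly from Theorem~\ref{thm:General-Lower-Bound}. Since $7\equiv 1\pmod 3$, it gives $gr^3_2(B_7)\ge \frac{5\cdot 7+1}{3}=12$. For the upper bound I would follow the same template as for $B_5$ and $B_6$. Take a Gallai $3$-coloring of $K_{12}$ with base graph $\mathcal{B}$ of minimal order. Since $\frac{2\cdot 12}{12-7+1}=4$, Lemma~\ref{RemoveLargeBaseGraph} handles every $|V(\mathcal{B})|\ge 5$. Lemma~\ref{not3} excludes order $3$. Order $4$ is handled by Lemma~\ref{lem:no-four}, because $12\ge \frac{35}{3}$. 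So we may assume $|V(\mathcal{B})|=2$, with blocks $X_1,X_2$, $k_1\ge k_2$, and red edges between them. If $k_1\ge 7$ and $k_2\ge 2$, a spine in $X_2$ with seven pages in $X_1$ finishes the proof. This leaves the cases $(k_1,k_2)\in\{(6,6),(11,1)\}$.

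The case $(6,6)$ is new compared with the earlier theorems, and I expect it to be the main point. Here we need a spine $uv$ and one extra page $w$, all inside $X_2$, with the six vertices of $X_1$ as the remaining pages. The edges to $X_1$ are red, so the book uses at most two colors exactly when the triangle $uvw$ either contains a red edge or is monochromatic. A Gallai triangle is never rainbow, so it uses at most two colors.
\begin{itemize}
\item If $X_2$ contains a red edge $uv$, then any $w\in X_2\setminus\{u,v\}$ works: the triangle $uvw$ uses red plus at most one other color.
\item Otherwise $X_2$ induces a blue/green $K_6$. Since $r(K_3,K_3)=6$, it contains a monochromatic triangle $uvw$, which works.
\end{itemize}
Either way we obtain a $B_7$ with at most two colors.

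For $(11,1)$, let $x$ be the vertex joined by red edges to $X_1$, with $|X_1|=11$. Take the minimal base graph $\mathcal{B}'$ of $X_1$. Since $\frac{2\cdot 11}{11-7+1}<5$, Lemmas~\ref{RemoveLargeBaseGraph} and~\ref{not3} leave $|V(\mathcal{B}')|\in\{2,4\}$.
\begin{itemize}
\item Order $4$, colors of $\mathcal{B}'$ avoiding red: apply Lemma~\ref{mainlem2} with $n=7$, since $12\ge\frac{34}{3}$.
\item Order $4$, colors of $\mathcal{B}'$ including red: the smallest block has $\ell_4\le 2$, so $\ell_1+\ell_2+\ell_3\ge 9$. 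The red spine $xy$ with $y\in Y_4$, together with pages $Y_1\cup Y_2\cup Y_3$, gives the book.
\item Order $2$, with blocks $Y_1,Y_2$ and $\mathcal{B}'$ red or blue: the cases with $\ell_1\ge 7$ and $\ell_2\ge 2$ are immediate, leaving $(10,1)$ and $(6,5)$.
\item Subcase $(10,1)$: use the red spine $xy$, where $Y_2=\{y\}$, with pages $Y_1$.
\item Subcase $(6,5)$, $Y_2$ containing a red or blue edge: that edge is the spine, with pages $Y_1\cup\{x\}$, giving seven pages in red/blue.
\item Subcase $(6,5)$, $Y_2$ a green $K_5$: use a green spine in $Y_2$ with one further page in $Y_2$ and the six pages of $Y_1$. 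This uses only green and the color of $\mathcal{B}'$.
\end{itemize}
This exhausts all cases, giving $gr^3_2(B_7)\le 12$.
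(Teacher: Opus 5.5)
Your proposal is correct and follows the paper's proof essentially step for step. It uses the same reductions via Lemmas~\ref{not3}, \ref{RemoveLargeBaseGraph}, \ref{lem:no-four}, and \ref{mainlem2}, the same residual cases $(6,6)$ and $(11,1)$, and the same subcases $(6,5)$ and $(10,1)$ inside the block of order $11$. The one difference is in the $(6,6)$ case: the paper applies $r^2(B_1)=6$ to $X_2$ with red and blue merged, while you first handle a red edge via the no-rainbow-triangle property and then apply $r(K_3,K_3)=6$ to the remaining blue/green $K_6$, which amounts to the same argument.
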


\begin{proof}
The lower bound $gr_2^3(B_7) \geq 12$ follows from Theorem~\ref{thm:General-Lower-Bound}. To prove the reverse inequality, consider a Gallai 3-coloring of $K_{12}$, with base graph $\mathcal{B}$, chosen to have minimal order. Since
\[
\frac{2 \cdot 12}{12 - 7 + 1} < 5 \quad \text{and} \quad \frac{5 \cdot 7}{3} \leq 12,
\]	
then by Theorem~\ref{Gallaistruct} and Lemmas~\ref{not3}, \ref{RemoveLargeBaseGraph}, and \ref{lem:no-four}, it suffices to consider the case where $|V(\mathcal{B})|=2$. Denote the vertex sets for the blocks by $X_1$ and $X_2$, with $k_i := |X_i|$ and the indexing such that $k_1 \geq k_2$. Without loss of generality, suppose the edges joining $X_1$ and $X_2$ are red. If $k_2 \geq 7$ and $k_2 \geq 2$, then we get a $B_7$ whose edges use at most two colors by choosing a spine in $X_2$ and any seven vertices in $X_1$ as pages. Thus, it suffices to consider the following cases. 

\underline{Case 1:} Assume that $(k_1,k_2) = (6,6)$. Since $k_2 = r^2(B_1)=6$ \cite{GG}, the subgraph induced by $X_2$ must contain either a red/blue $B_1$, or a green $B_1$. Adding the six vertices in $X_1$ as pages, in the former case we get a red/blue $B_{7}$, whereas in the latter case we get a red/green $B_{7}$. 

\underline{Case 2:} Assume that $(k_1,k_2) = (11,1)$. Denote the single vertex in $X_2$ by $x$. Using Theorem~\ref{Gallaistruct}, let $\mathcal{B}'$ be the base graph for the subgraph induced by $X_1$, chosen to have minimal order. Since 
\[
\frac{2 \cdot 11}{11 - 7 + 1} < 5,
\]
then by Lemmas~\ref{not3} and \ref{RemoveLargeBaseGraph} it suffices to consider the cases where $|V(\mathcal{B}')| \in \{2,4\}$. 

Let us first assume that $|V(\mathcal{B}')|=4$. If $\mathcal{B}'$ uses the two colors other than red, then since $12 \geq \frac{5 \cdot 7 - 1}{3}$ and $x$ is a vertex incident with edges in only red, Lemma~\ref{mainlem2} implies the existence of a $B_{7}$ whose edges use at most two colors. Hence, we may assume that one of the colors used in $\mathcal{B}'$ is red. Denote the vertex sets for the blocks in $\mathcal{B}'$ by $Y_1$, $Y_2$, $Y_3$, and $Y_4$, with $\ell_i := |Y_i|$ and the indexing such that $\ell_1 \geq \ell_2 \geq \ell_3 \geq \ell_4$. Since $\ell_4 \leq \lfloor \frac{11}{4} \rfloor = 2$, it follows that $\ell_1 + \ell_2 + \ell_3 \geq 11 - 2 = 9$. Let $y$ be any vertex in $Y_4$. Taking the red edge $xy$ as a spine and adding $Y_1 \cup Y_2 \cup Y_3$ as pages, we get a book with at least nine pages, and whose edges use at most two colors, one of which is red.

Now suppose that $|V(\mathcal{B}')|=2$. 
Without loss of generality, assume that the edge in $\mathcal{B}'$ is red or blue.  Denote the vertex sets for the blocks in $\mathcal{B}'$ by $Y_1$ and $Y_2$, with $\ell_i := |Y_i|$ and the indexing such that $\ell_1 \geq \ell_2$.  If $\ell_1 \geq 7$ and $\ell_2 \geq 2$, then a $B_7$ whose edges use at most two colors can be formed using an edge in $Y_2$ as a spine with any seven vertices in $Y_1$ as pages. Thus, it suffices to consider the following subcases. 

\underline{Subcase 2.1:} Assume that $(\ell_1,\ell_2) = (6,5)$.  If $Y_2$ contains a red or blue edge, then that edges forms the spine of a red/blue $B_7$ with pages $\{x\}\cup Y_1$.  So, assume that the subgraph induced by $Y_2$ is a green $K_5$, which contains $B_3$ as a subgraph.  Adding in the vertices in $Y_1$ as pages results in a $B_9$ (which contains a $B_7$) whose edges use at most two colors, one of which is green.

\underline{Subcase 2.2:} Assume that $(\ell_1,\ell_2) = (10,1)$. Denote the single vertex in $Y_2$ by $y$. Then using the red edge $xy$ as a spine, we get a red/blue $B_{10}$ (which contains a $B_7$) by adding the vertices in $Y_1$ as pages.

In all cases, a $B_7$ whose edges use at most two colors exists, from which it follows that $gr_2^3(B_7) \le 12$.
\end{proof}

\begin{theorem}
	$gr_2^3(B_8) = 13$.
\end{theorem}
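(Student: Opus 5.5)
The lower bound is immediate: $8\equiv 2\pmod 3$, so Theorem~\ref{thm:General-Lower-Bound} gives $gr^3_2(B_8)\ge \frac{4\cdot 8+7}{3}=13$. For the upper bound I would take a Gallai $3$-coloring of $K_{13}$ and a base graph $\mathcal{B}$ of minimal order, and run the same reduction as for $B_6$ and $B_7$. Since $\frac{2\cdot 13}{13-8+1}<5$, Lemmas~\ref{not3} and \ref{RemoveLargeBaseGraph} leave only $|V(\mathcal{B})|\in\{2,4\}$.

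Lemma~\ref{lem:no-four} does not apply here, because $\frac{5\cdot 8}{3}>13$, so the order-$4$ case must be handled directly. Write the block sizes as $k_1\ge k_2\ge k_3\ge k_4$, with red and blue in $\mathcal{B}$.
\begin{itemize}
\item If $k_3+k_4\le 5$, a spine joining $X_3$ to $X_4$ with pages $X_1\cup X_2$ gives a red/blue $B_8$.
\item Otherwise $k_3+k_4\ge 6$ forces $(k_1,k_2,k_3,k_4)=(4,3,3,3)$.
\item Any red or blue edge inside a block is then a spine with at least $9$ pages outside that block.
\item So all blocks are green. In $\mathcal{B}$ the vertex for $X_1$ sends two edges of the same color to, say, $X_a$ and $X_b$. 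A green spine inside $X_1$ then has pages $(X_1\setminus\{\text{spine}\})\cup X_a\cup X_b$, which is $2+3+3=8$ pages using two colors.
\end{itemize}

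For $|V(\mathcal{B})|=2$, say with red between the blocks:
\begin{itemize}
\item If $k_2\ge 2$ and $k_1\ge 8$, we are done.
\item If $(k_1,k_2)=(7,6)$, then $r^2(B_1)=6$ applied to $X_2$ (green versus non-green) gives either a red/blue triangle or a green triangle in $X_2$. Taking an edge of that triangle as spine, the remaining vertex plus the $7$ vertices of $X_1$ give $8$ pages, and the book uses at most two colors.
\item The remaining case is $(12,1)$, with the single vertex $x$ joined only by red edges. Since $\frac{2\cdot 12}{12-8+1}<5$, the base graph $\mathcal{B}'$ of $X_1$ has order $2$ or $4$.
\item If $|V(\mathcal{B}')|=4$ and red is absent from $\mathcal{B}'$, Lemma~\ref{mainlem2} applies, since $13\ge\frac{5\cdot 8-1}{3}$.
\item If $|V(\mathcal{B}')|=4$ and red is present, then $\ell_4\le 3$. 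The red spine $xy$ with $y\in Y_4$ and pages $Y_1\cup Y_2\cup Y_3$ gives at least $9$ pages.
\end{itemize}

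If $|V(\mathcal{B}')|=2$, suppose the edge of $\mathcal{B}'$ has color $c\in\{\text{red},\text{blue}\}$. The subcases are as follows.
\begin{itemize}
\item $\ell_2\ge 2$ and $\ell_1\ge 8$: done directly.
\item $(11,1)$: the spine $xy$ has $11$ pages.
\item $(7,5)$: a red or blue edge in $Y_2$ is a spine with pages $\{x\}\cup Y_1$, giving $8$ pages. Otherwise $Y_2$ is a green $K_5$, which contains a green $B_3$; with $Y_1$ added, this gives $10$ pages.
\item $(6,6)$: this is where I expect the main obstacle.
\end{itemize}

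In the $(6,6)$ case, a red/blue triangle in either block is fine: spine in the triangle, pages the third vertex, $x$, and the $6$ vertices of the other block, for $8$ pages. So by $r^2(B_1)=6$ each block contains a green triangle. A green edge $uv$ inside $Y_2$ with pages $Y_1$ and the third vertex $t$ gives only $7$ pages. I would try to find one more page $w\in Y_2\setminus\{u,v,t\}$ with $uw,vw\in\{\text{green},c\}$.

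If no such $w$ exists for any choice of green triangle, each of the other three vertices sends an edge of color $\ne c$, non-green, to $\{u,v\}$. I would exploit the Gallai condition on $Y_2$, through its own Gallai partition, together with the absence of red/blue triangles, to either locate a larger green clique (a green $K_4$ gives a green $B_2$ and $8$ pages) or derive a contradiction. The fallback is to use spines of the form $xy$ with $y\in Y_2$ when $c$ is red.
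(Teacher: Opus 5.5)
Your reduction is sound everywhere except the subcase $(\ell_1,\ell_2)=(6,6)$, which you leave as a plan. The lower bound via Theorem~\ref{thm:General-Lower-Bound} is valid; the paper exhibits a specific coloring instead. Your handling of the order-$4$ base graph, via the three base-graph edges at $X_1$, is slightly cleaner than the paper's.

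The $(6,6)$ plan, however, does not close as stated, because there need be neither a green $K_4$ nor anything to contradict. For example, let each of $Y_1$ and $Y_2$ be two green triangles joined to each other by red edges, let $c$ be blue, and let $x$ be red to everything. This is a Gallai $3$-coloring of $K_{13}$ with no red/blue triangle inside a block and no green $K_4$. For every green edge $uv$ in a block, the three vertices outside its green triangle are joined to $u$ and $v$ in red, so your extra page $w$ never exists. And since $c$ is blue, your fallback, which you restrict to $c$ red, is never invoked.

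The missing observation is that the red spine $xy$ with $y\in Y_i$ works for either value of $c\in\{\text{red},\text{blue}\}$. Each vertex of $Y_{3-i}$ is joined to $x$ in red and to $y$ in color $c$. So if $y$ has two red or blue neighbors inside $Y_i$, those two vertices together with $Y_{3-i}$ are $8$ pages of a red/blue book; in the example above every $y$ has three such neighbors. Hence you may assume every vertex has at most one red/blue neighbor, i.e.\ at least four green neighbors, inside its own block.

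The paper then finishes with a spine crossing the blocks. For $y_1\in Y_1$ and $z_1\in Y_2$, the edge $y_1z_1$ has color $c$. The four green neighbors of $y_1$ in $Y_1$ together with the four green neighbors of $z_1$ in $Y_2$ are pages of a green/$c$ book $B_8$.

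If you prefer to keep your single-block green spine, use this instead. The red/blue graph on $Y_2$ is now a matching, so the green graph on $Y_2$ contains $K_6$ minus a perfect matching, i.e.\ $K_{2,2,2}$, which contains $B_2$. Its spine, its two pages, and the six vertices of $Y_1$ give a green/$c$ book $B_8$.
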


\begin{proof}
The lower bound $gr_2^3(B_8) \geq 13$ follows from the construction shown in Figure \ref{LowerB8}. 
\begin{figure}[h!]
\centerline{
\includegraphics[width=0.5\textwidth]{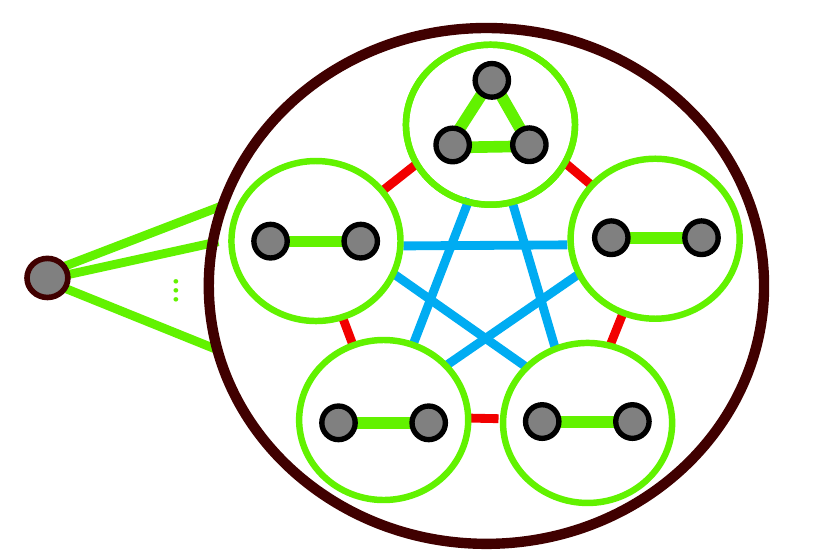}}
\caption{A Gallai $3$-coloring of $K_{12}$ that avoids a $B_8$ whose edges use at most two colors.}\label{LowerB8}
\end{figure}
Here, the largest red/blue book has seven pages, the largest red/green book has six pages, and the largest blue/green book has six pages. To prove the reverse inequality, consider a Gallai $3$-coloring of $K_{13}$, with base graph $\mathcal{B}$, chosen to have minimal order. Since 
\[
\frac{2 \cdot 13}{13 - 8 + 1} < 5,
\]
then by Theorem~\ref{Gallaistruct} and Lemmas~\ref{not3} and \ref{RemoveLargeBaseGraph}, we know that $|V(\mathcal{B})| \in \{2,4\}$. We consider the following cases. 

\underline{Case 1:} Assume that $|V(\mathcal{B})| = 4$. Without loss of generality, suppose that the $2$-coloring of $\mathcal{B}$ uses red and blue. Denote the vertex sets for the blocks by $X_1$, $X_2$, $X_3$, and $X_4$ with $k_i := |X_i|$ and the indexing such that $k_1 \geq k_2\geq k_3 \geq k_4$. If $k_1 + k_2 \geq 8$, then a $B_8$ whose edges use at most two colors is formed by taking an edge between $X_3$ and $X_4$ as a spine and choosing any $8$ vertices in $X_1 \cup X_2$ as pages. Therefore, we may assume that $k_1 + k_2 \leq 7$. If $k_1 = 5$, this forces $k_2 \leq 2$, but then $13 = k_1 + k_2+k_3+k_4 \leq 5 + 3 \cdot 2 = 11 < 13$ would be a contradiction. Likewise, if $k_1 = 6$, this forces $k_2 \leq 1$, but then $13 = k_1 + k_2+k_3 + k_4 \leq 6 + 3 \cdot 1 = 9 < 13$ is a contradiction. Moreover, since $k_2 \geq 1$, we cannot have $k_1 > 6$ without forcing $k_1 + k_2 \geq 8$. On the other hand, we cannot have $k_1 < 4$ since then $13 = k_1 + k_2+k_3 + k_4 \leq 4 \cdot 3 = 12 < 13$ would be a contradiction. It now follows that we need only consider the case where $k_1 = 4$ and $k_i = 3$ for $i \in \{2,3,4\}$. 

Suppose that $X_1$ contains a red or blue edge. Then, using it as the spine, we get a red/blue $B_{k_2+k_3+k_4} = B_9$ whose edges use at most two colors by adding the vertices in $X_2 \cup X_3 \cup X_4$ as pages. Thus, we may assume that $X_1$ contains only green edges. Since $k_1 + k_3 + k_4 \geq k_2 + k_3 + k_4$, the same argument leads us to supposing that $X_2$ contains only green edges, and likewise for $X_3$.  Among the blocks $X_1$, $X_2$, and $X_3$, there must be one that joins to the other two with edges in only one color, say red.  Assume for now that it is $X_1$. Then, as $X_1$ is a green $K_4$, which contains a green $B_2$, we can form a red/green $B_{8}$ by adding the vertices in $X_2 \cup X_3$ as pages. If instead it is $X_2$ that is incident with edges in only one color (say, red) to $X_1\cup X_3$,, then using the green $B_1$ in $X_2$ we can form a red/green $B_{8}$ by adding the vertices in $X_1 \cup X_3$ as pages. The same argument works if $X_3$ is the block incident with only one color. So in all cases, when $|V(\mathcal{B})| = 4$, there exists a $B_8$ whose edges use at most two colors. 

\underline{Case 2:} Assume that $|V(\mathcal{B})| = 2$. Denote the vertex sets for the blocks by $X_1$ and $X_2$, with $k_i := |X_i|$ and the indexing such that $k_1 \geq k_2$. Without loss of generality, suppose the edge joining $X_1$ and $X_2$ is red. If $k_2 \geq 2$ and $k_1 \geq 8$, then we get a $B_8$ whose edges use at most two colors by choosing a spine in $X_2$ and any eight vertices in $X_1$ as pages. Thus, it suffices to consider the following subcases. 

\underline{Subcase 2.1:} Assume that $(k_1,k_2) = (7,6)$. Since $k_2= r^2(B_1)$ \cite{GG}, the subgraph induced by $X_2$ must contain either a red/blue $B_1$, or a green $B_1$. Adding the seven vertices in $X_1$ as pages, in the former case we get a red/blue $B_{7}$, whereas in the latter case we get a red/green $B_{7}$. 

\underline{Subcase 2.2:} Assume that $(k_1,k_2) = (12,1)$. Denote the single vertex in $X_2$ by $x$. Using Theorem~\ref{Gallaistruct}, let $\mathcal{B}'$ be the base graph for the subgraph induced by $X_1$, chosen to have minimal order. Since 
\[
\frac{2 \cdot 12}{12 - 8 + 1} < 5,
\]
then by Lemmas~\ref{not3} and \ref{RemoveLargeBaseGraph}, it suffices to consider the cases where $|V(\mathcal{B}')| \in \{2,4\}$. 

Let us first assume that $|V(\mathcal{B}')|=4$. Denote the vertex sets of the blocks of $\mathcal{B}'$ by $Y_1$, $Y_2$, $Y_3$, and $Y_4$, with $\ell_i := |Y_i|$ and the indexing such that $\ell_1 \geq \ell_2 \geq \ell_3 \geq \ell_4$. Note that $\ell_4 \leq \frac{12}{4} = 3$ and thus $\ell_1 + \ell_2 + \ell_3 \geq 9$. If red is one of the colors used in $\mathcal{B}'$, then for any $y \in Y_4$, taking the red edge $xy$ as a spine and using $Y_1 \cup Y_2 \cup Y_3$ as pages gives a book with at least $9$ pages, and whose edges use at most two colors (one of which is red). Thus, we may assume the two colors used in $\mathcal{B}'$ are not red. Since $13 \geq \frac{5 \cdot 8 - 1}{3}$ and $x$ is a vertex incident with edges in only red, Lemma~\ref{mainlem2} implies the existence of a $B_{8}$ whose edges use at most two colors.

Now suppose that $|V(\mathcal{B}')|=2$. 
Without loss of generality, assume that the edge in $\mathcal{B}'$ is red or blue.   Denote the vertex sets for the blocks in $\mathcal{B}'$ by $Y_1$ and $Y_2$, with $\ell_i := |Y_i|$ and the indexing such that $\ell_1 \geq \ell_2$. If $\ell_1 \geq 8$ and $\ell_2 \geq 2$, then a $B_8$ whose edges use at most two colors can be formed using an edge in $Y_2$ as a spine with any eight vertices in $Y_1$ as pages. Thus, it suffices to consider the following subsubcases. 

\underline{Subsubcase 2.2.1:} Assume that $(\ell_1, \ell_2)=(6,6)$.  If there exists a vertex in $Y_1$ that is incident with at least two edges that are red or blue (say, $y_1y_2$ and $y_1y_3$ are red or blue), then a red/blue $B_8$ can be formed with spine $y_1x$ and pages in $\{y_2,y_3\}\cup Y_2$.  So, assume that each vertex in $Y_1$ is incident with at least four green edges.  Without loss of generality, assume that $y_1y_2$, $y_1y_3$, $y_1y_4$, and $y_1y_5$ are green edges in the subgraph induced by $Y_1$.  Following a similar argument, assume that $z_1z_2$, $z_1z_3$, $z_1z_4$, and $z_1z_5$ are green edges in the subgraph induced by $Y_2$.  Then a $B_8$ can be formed with spine $y_1z_1$ and pages in $\{y_2,y_3,y_4,y_5,z_2,z_3,z_4,z_5\}$ whose edges use at most two colors, one of which is green.



\underline{Subsubcase 2.2.2:} Assume that $(\ell_1,\ell_2) = (7,5)$.  If $Y_2$ contains a red or blue edge, then using that edge as the spine, a red/blue $B_8$ can be formed with pages in $\{x\}\cup Y_1$.  So, assume that the subgraph induced by$Y_2$ is a green $K_5$, which contains a $B_3$ as a subgraph.  Adding the vertices in $Y_1$ as pages to such a subgraph results in a $B_8$ whose edges use at most two colors, one of which is green.  


\underline{Subsubcase 2.2.3:} Assume that $(\ell_1, \ell_2)=(11,1)$.  Let $y$ be the vertex in $Y_2$.  Then a red/blue $B_{11}$ can be formed with spine $xy$ and pages $Y_1$.

In all cases, a $B_8$ whose edges use at most two colors exists, from which it follows that $gr_2^3(B_8)\le 13$.
\end{proof}


In order to determine $gr^3_2(B_9)$, we will need to use the critical coloring for $r^2(B_2)=10$ described in \cite{CH2}.  Start by considering the graph $K_3+K_3$, which is a Paley graph of order $9$, and also a $3\times 3$ rooks graph (see Figure \ref{CH1.1}).  Note that $K_3+K_3$ is $4$-regular.  Since it is a Paley graph, it is self-complementary, and Figure \ref{CH1.2} shows this graph and its complement in blue and green, which is a critical coloring for $r^2(B_2)$.
%
\begin{figure}[h!]
\centering
\begin{subfigure}{.37\textwidth}
    \centering
    \includegraphics[width=1\linewidth]{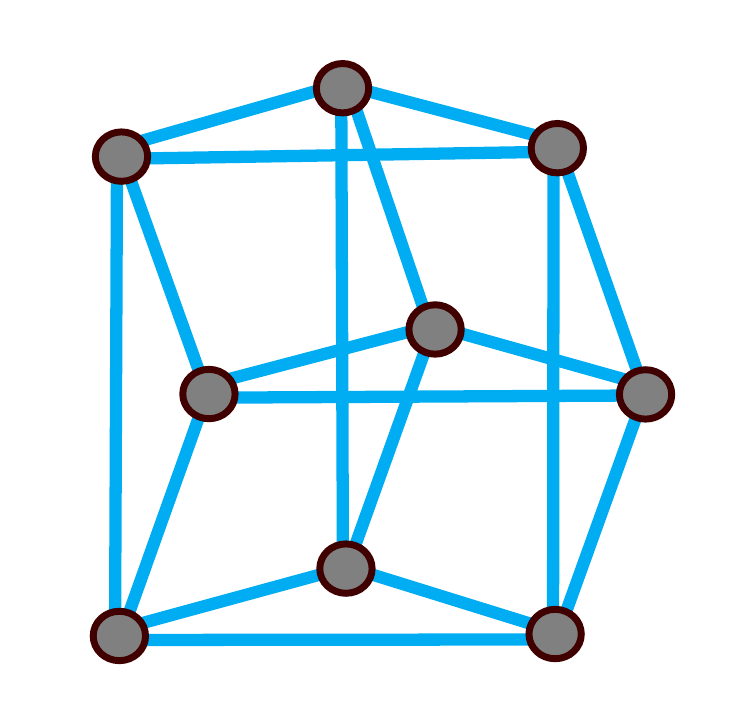}  
    \caption{}
    \label{CH1.1}
\end{subfigure}\qquad \qquad
\begin{subfigure}{.42\textwidth}
    \centering
    \includegraphics[width=1\linewidth]{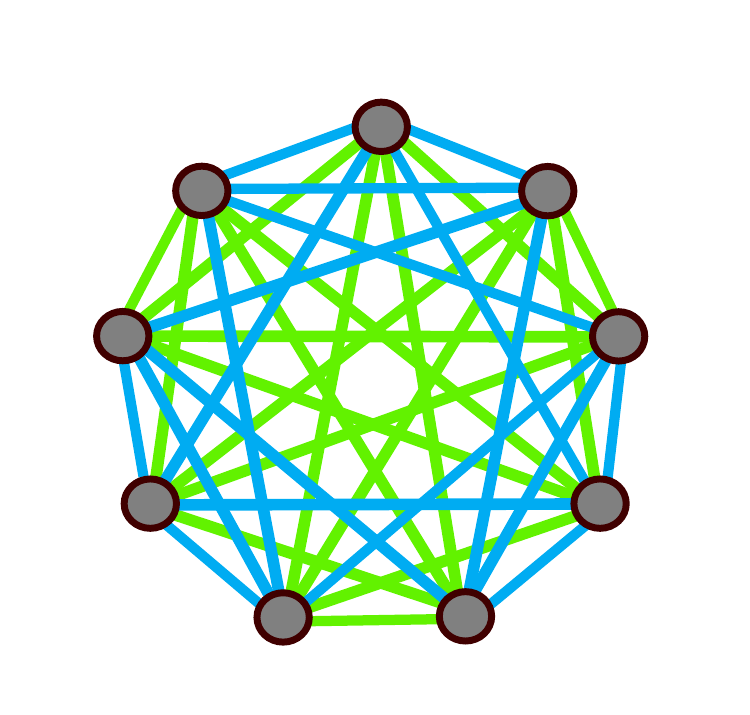}  
    \caption{}
    \label{CH1.2}
\end{subfigure}
\caption{The critical coloring for $r^2(B_2)=10$ given by Chv\'atal and Harary in \cite{CH2}.}\label{CH2lowerbound}
\end{figure}

\begin{theorem}
	$gr_2^3(B_9) = 15$.
\end{theorem}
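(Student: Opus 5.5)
The lower bound $gr_2^3(B_9)\ge 15$ should come from Theorem~\ref{critlower} with $n=2$, $k=9$ and $\mathcal{G}$ the Chv\'atal--Harary critical coloring of $K_9$ for $r^2(B_2)=10$ (Figure~\ref{CH1.2}). We have $9<10=r(B_2,B_2)$, and $\mathcal{G}$ has no monochromatic $B_2$. Since $K_3+K_3$ and its complement are both $4$-regular, every monochromatic star has at most $4$ edges. Also $\floor{\frac{2+9-1}{2}}+1=6$, so $\mathcal{G}$ certainly avoids a monochromatic $K_{1,6}$. Theorem~\ref{critlower} then gives $gr^3_2(B_{11})\ge 19$, which concerns $B_{11}$ rather than $B_9$. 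So I would instead reuse the construction in its proof, adapted to $B_9$. Take a red $K_2$, replace each vertex by a copy of $\mathcal{G}$, and obtain a $K_{18}$.

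This $K_{18}$ is too big, so the argument has to be redone carefully. In the doubled construction, a red/blue book with a blue spine inside one copy has pages $\le 1+9=10$, and $10\ge 9$, so it fails. I therefore expect the real lower-bound construction to be a different coloring of $K_{14}$, probably two blocks of sizes $5$ and $9$ joined in red. Put the Paley $C_5$ coloring (Figure~\ref{GY1.2}) on the block of size $5$ and the $K_3+K_3$ coloring on the block of size $9$. Then I would check three book types:
\begin{itemize}
\item a red/blue book with a blue spine in the $9$-block has at most $1$ page inside that block plus $5$ outside, so $6$;
\item a blue spine in the $5$-block gives at most $0+9=9$ pages, which is too many.
\end{itemize}
The $5$-block therefore has to be small or recolored, and I would look for a $14$-vertex coloring giving at most $8$ pages of each two-color type, tuning block sizes such as $(9,5)$ or four-block structures so that each count stays at most $8$. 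I regard finding this construction as an obstacle, but a finite one.

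For the upper bound I would follow the template of the $B_8$ proof with $m=15$ and $n=9$. We have $\frac{2\cdot15}{7}<5$, so Lemmas~\ref{not3} and \ref{RemoveLargeBaseGraph} give $|V(\mathcal{B})|\in\{2,4\}$. Since $15\ge\frac{5\cdot 9}{3}$, Lemma~\ref{lem:no-four} kills the order-$4$ case. In the order-$2$ case with red between the blocks, block sizes $k_1\ge 9$, $k_2\ge2$ win immediately. The pairs $(8,7)$ and $(7,8)$ are impossible by ordering, and $(k_1,k_2)=(8,7)$ means $k_1<9$, so the remaining cases are $(8,7)$ and $(14,1)$.
\begin{itemize}
\item For $(8,7)$: if the $7$-block contains a red/blue or red/green $B_2$ using two colors including red, adding the $8$ outside vertices gives at least $8+2=10$ pages. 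I would use $r^2(B_2)=10>7$ carefully, exploiting that the block is $3$-colored, and argue via vertex degrees in the style of Subsubcase 2.2.1. If some vertex has two red/blue edges inside a block, take it with a vertex of the other block as spine.
\item For $(14,1)$ with apex $x$: the inner base graph $\mathcal{B}'$ has order $2$ or $4$. Order $4$ is handled by Lemma~\ref{mainlem2} when $\mathcal{B}'$ avoids red, since $15\ge\frac{44}{3}$. Otherwise use spine $xy$ with $y$ in the smallest block, giving at least $14-3=11$ pages. Order $2$ gives splits $(\ell_1,\ell_2)\in\{(13,1),(8,6),(7,7)\}$, handled as in the $B_8$ subsubcases using green cliques and degree counting.
\end{itemize}
I expect the $(8,7)$ and $(7,7)$ splits to be the main difficulty, since they need Ramsey-type facts ($r^2(B_1)=6$, green-degree pigeonholing) rather than pure counting.
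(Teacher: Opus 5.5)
\textbf{The lower bound is missing.} You end without a Gallai $3$-coloring of $K_{14}$, but Theorem~\ref{critlower} already supplies one. You fixed $k=9$ instead of solving $n+k=9$, $2k+1=15$. Take $n=2$, $k=7$, and let $\mathcal{G}$ be the blue/green coloring of $K_7$ obtained by deleting any two vertices of the Chv\'atal--Harary coloring in Figure~\ref{CH1.2}. Then $7<r(B_2,B_2)=10$, and $\mathcal{G}$ has no monochromatic $B_2$. Every monochromatic degree is at most $4$, so there is no monochromatic $K_{1,\lfloor (2+7-1)/2\rfloor+1}=K_{1,5}$, and hence $gr^3_2(B_9)\ge 15$. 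Concretely, in the red join of two copies of $\mathcal{G}$:
a red spine has at most $4+4=8$ red/blue (or red/green) pages;
a blue or green spine has at most $1+7=8$;
a blue/green book has at most $5$.
Your own $9+9$ and $9+5$ attempts show why both blocks must have order $7$: the blue-spine count is $1$ plus the order of the other block.

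\textbf{The outer case $(k_1,k_2)=(8,7)$ also fails as written.} A red/blue or red/green $B_2$ in $X_2$ need not exist, since $X_2$ could be exactly the $\mathcal{G}$ above. The degree argument of Subsubcase~2.2.1 in the $B_8$ proof does not transfer either, because it relied on the apex $x$ being joined to everything in red. Here a spine $uv$ with $u\in X_2$, $v\in X_1$ gains only the red/blue (resp.\ red/green) neighbours of $u$ in $X_2$ and of $v$ in $X_1$. That is at most $4+4=8$ when both blocks are blue/green with monochromatic degrees at most $4$. The missing point is that one internal page suffices, because $8+1=9$. Merge red and blue against green: $r^2(B_1)=6\le 7$ yields a red/blue or green triangle in $X_2$ (the paper uses $r(B_1,B_2)=7$). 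Two of its vertices as spine, with the third and all of $X_1$ as pages, give a red/blue or red/green $B_9$.

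The rest of your upper bound is sound, including the apex splits. In $(8,6)$ the green-clique argument gives $1+8$ or $4+8$ pages. In $(7,7)$, either some vertex has two red/blue edges in its block (a spine through $x$ gives $2+7$ pages), or every vertex has green degree at least $5$ in its block (a cross spine gives a $B_{10}$). The paper instead cites $r^2(B_1)=6$ and $r(B_1,B_2)=7$ for these two splits.
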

\begin{proof}
The lower bound $gr_2^3(B_9) \geq 15$ follows from Theorem \ref{critlower} by taking $\mathcal{G}$ to be the blue/green coloring of $K_7$ formed by removing two of the vertices from the graph in Figure \ref{CH1.2}.  To prove the reverse inequality, consider a Gallai $3$-coloring of $K_{15}$, with base graph $\mathcal{B}$, chosen to have minimal order. Since
\[
\frac{2 \cdot 15}{15 - 9 + 1} < 5 \quad \text{and} \quad \frac{5 \cdot 9}{3} \leq 15,
\]	
then by Theorem~\ref{Gallaistruct} and Lemmas~\ref{not3}, \ref{RemoveLargeBaseGraph}, and \ref{lem:no-four}, it suffices to consider the case where $|V(\mathcal{B})|=2$. Denote the vertex sets for the blocks by $X_1$ and $X_2$, with $k_i := |X_i|$ and the indexing such that $k_1 \geq k_2$. Without loss of generality, suppose the edges joining $X_1$ and $X_2$ are red. If $k_1 \geq 9$ and $k_2 \geq 2$, then we get a $B_9$ whose edges use at most two colors by choosing a spine in $X_2$ and any nine vertices in $X_1$ as pages. Thus, it suffices to consider the following cases. 

\underline{Case 1:} Assume that $(k_1,k_2) = (8,7)$. Since $k_2= r(B_1,B_2)$ \cite{CH3}, the subgraph induced by $X_2$ must contain either a red/blue $B_1$, or a green $B_2$. Adding the eight vertices in $X_1$ as pages, in the former case we get a red/blue $B_{9}$, whereas in the latter case we get a red/green $B_{10}$ (which contains a $B_9$). 

\underline{Case 2:} Assume that $(k_1,k_2) = (14,1)$. Denote the single vertex in $X_2$ by $x$. Using Theorem~\ref{Gallaistruct}, let $\mathcal{B}'$ be the base graph for the subgraph induced by $X_1$, chosen to have minimal order.  Since 
\[
\frac{2 \cdot 14}{14 - 9 + 1} < 5,
\]
then by Lemmas~\ref{not3} and \ref{RemoveLargeBaseGraph}, it suffices to consider the cases where $|V(\mathcal{B}')| \in \{2,4\}$. 

Let us first assume that $|V(\mathcal{B}')|=4$. If $\mathcal{B}'$ uses the two colors other than red, then since $15 \geq \frac{5 \cdot 9 - 1}{3}$ and $x$ is a vertex incident with edges in only red, Lemma~\ref{mainlem2} implies the existence of a $B_{9}$ whose edges use at most two colors. Hence, we may assume that one of the colors used in $\mathcal{B}'$ is red. Denote the vertex sets for the blocks in $\mathcal{B}'$ by $Y_1, Y_2, Y_3, Y_4$, with $\ell_i := |Y_i|$ and the indexing such that $\ell_1 \geq \ell_2 \geq \ell_3 \geq \ell_4$. Since $\ell_4 \leq \lfloor \frac{14}{4} \rfloor = 3$, it follows that $\ell_1 + \ell_2 + \ell_3 \geq 14 - 3 = 11$. Let $y$ be any vertex in $Y_4$. Taking the red edge $xy$ as a spine and adding $Y_1 \cup Y_2 \cup Y_3$ as pages, we get a book with at least $11$ pages, and whose edges use at most two colors.

Now suppose that $|V(\mathcal{B}')|=2$. 
Without loss of generality, assume that the edge in $\mathcal{B}'$ is red or blue.  Denote the vertex sets for the blocks in $\mathcal{B}'$ by $Y_1$ and $Y_2$, with $\ell_i := |Y_i|$ and the indexing such that $\ell_1 \geq \ell_2$. If $\ell_1 \geq 9$ and $\ell_2 \geq 2$, then a $B_9$ whose edges use at most two colors can be formed using an edge in $Y_2$ as a spine with any nine vertices in $Y_1$ as pages. Thus, it suffices to consider the following subcases. 

\underline{Subcase 2.1:} Assume that $(\ell_1,\ell_2) = (7,7)$. Since $\ell_2 = r(B_1,B_2)$ \cite{CH3}, there exists a red/blue $B_1$ or a green $B_2$ in $Y_2$. In the latter case, adding the seven vertices in $Y_1$ as pages gives a blue/green $B_{9}$. In the former case, adding the seven vertices in $Y_1$, together with the vertex $x$, gives a red/blue $B_{9}$. 

\underline{Subcase 2.2:} Assume that $(\ell_1,\ell_2) = (8,6)$. Since $\ell_2 = r^2(B_1)$ \cite{GG}, there exists a red/blue $B_1$ or a green $B_1$ in $Y_2$. Adding the eight vertices in $Y_1$ as pages, in the former case we get a red/blue $B_{9}$, whereas in the latter case we get a blue/green $B_{9}$. 

\underline{Subcase 2.3:} Assume that $(\ell_1,\ell_2) = (13,1)$. Denote the single vertex in $Y_2$ by $y$. Then using the red edge $xy$ as a spine, we get a red/blue $B_{13}$ (which contains a $B_9$) by adding the vertices in $Y_1$ as pages.

In all cases, a $B_9$ whose edges use at most two colors exists, from which it follows that $gr_2^3(B_9) \le 15$.
\end{proof}

\begin{theorem}
	$gr_2^3(B_{10}) = 17$.
\end{theorem}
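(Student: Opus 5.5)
For the lower bound I would use Theorem \ref{critlower} with $n=2$, $k=8$, so that $n+k=10$ and $2k+1=17$. This needs a blue/green coloring $\mathcal{G}$ of $K_8$ with no monochromatic $B_2$ and no monochromatic $K_{1,\floor{9/2}+1}=K_{1,5}$. The natural candidate is to delete one vertex from the Chv\'atal--Harary critical coloring for $r^2(B_2)=10$ in Figure \ref{CH1.2}. Deleting a vertex keeps the coloring $B_2$-free. Each color class of the $9$-vertex coloring is $4$-regular, so after deletion every vertex has monochromatic degree at most $4$. Hence there is no monochromatic $K_{1,5}$, and $8<r(B_2,B_2)=10$, which gives $gr^3_2(B_{10})\ge 17$.

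For the upper bound, take a Gallai $3$-coloring of $K_{17}$ with base graph $\mathcal{B}$ of minimal order. We have $\frac{2\cdot 17}{17-10+1}<5$ and $\frac{5\cdot 10}{3}\le 17$, so Lemmas \ref{not3}, \ref{RemoveLargeBaseGraph} and \ref{lem:no-four} reduce to $|V(\mathcal{B})|=2$, with red edges between the blocks $X_1,X_2$, where $k_1\ge k_2$. If $k_1\ge 10$ and $k_2\ge 2$, we are done. This leaves two cases: $(k_1,k_2)=(9,8)$ and $(16,1)$.

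\emph{Case $(9,8)$.} This case needs a two-color Ramsey input. I would use $r(B_1,B_3)=8$ (if I recall correctly, $r(K_3,B_n)=2n+2$ for $n\ge 2$). Then $X_2$ contains either a red/blue $B_1$ or a green $B_3$.
\begin{itemize}
\item A red/blue $B_1$ in $X_2$ has a red/blue spine, and adding the $9$ vertices of $X_1$ as pages gives a red/blue $B_{10}$.
\item A green $B_3$ in $X_2$, together with the $9$ vertices of $X_1$ as pages, gives a red/green $B_{12}$.
\end{itemize}

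\emph{Case $(16,1)$.} Let $x$ be the single vertex, with all edges at $x$ red, and let $\mathcal{B}'$ be a minimal base graph of $X_1$. Since $\frac{2\cdot 16}{16-10+1}<5$, we have $|V(\mathcal{B}')|\in\{2,4\}$.

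\emph{Subcase $|V(\mathcal{B}')|=4$.} If $\mathcal{B}'$ avoids red, Lemma \ref{mainlem2} applies because $17\ge \frac{49}{3}$. Otherwise the smallest block has order at most $4$. Then $x$ together with a vertex $y$ of that block forms a red spine, and the other three blocks supply at least $12$ pages, all joined to $y$ in red or the second base color.

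\emph{Subcase $|V(\mathcal{B}')|=2$.} Say $\mathcal{B}'$ is red or blue, with blocks $Y_1,Y_2$ and $\ell_1\ge\ell_2$. The remaining splits are $(8,8)$, $(9,7)$ and $(15,1)$.
\begin{itemize}
\item $(15,1)$: the spine $xy$ gives a red/blue $B_{15}$.
\item $(9,7)$: since $r(B_1,B_2)=7$, $Y_2$ contains a red/blue $B_1$ or a green $B_2$. The first, with pages $Y_1\cup\{x\}$, gives a red/blue $B_{10}$. The second, with pages $Y_1$, gives a blue/green $B_{11}$.
\item $(8,8)$: since $r(B_1,B_3)=8$, $Y_2$ contains a red/blue $B_1$ or a green $B_3$. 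The first, with pages $Y_1\cup\{x\}$, gives a red/blue $B_9$, which is one page short. The second, with pages $Y_1$, gives a blue/green $B_{11}$.
\end{itemize}

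The main obstacle is the $(8,8)$ split, and I would handle it as in Subsubcase 2.2.1 of the $B_8$ proof. Suppose some $y_1\in Y_1$ has two red/blue edges $y_1y_2$ and $y_1y_3$ inside $Y_1$. Then the red spine $xy_1$ with pages $\{y_2,y_3\}\cup Y_2$ is red/blue with $10$ pages. Otherwise every vertex of $Y_1$ has green degree at least $6$ inside $Y_1$, and by symmetry the same holds in $Y_2$. Choose $y_1\in Y_1$ and $z_1\in Y_2$, and use $y_1z_1$ (blue or red) as the spine. The pages are $6$ green neighbours of $y_1$ in $Y_1$ and $6$ green neighbours of $z_1$ in $Y_2$, giving $12$ pages. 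Every edge of this book is green or the color of $\mathcal{B}'$, so it is a two-colored $B_{12}\supseteq B_{10}$.

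The points I would need to verify are the values $r(B_1,B_3)=8$ and $r(B_1,B_2)=7$, and the $4$-regularity claim for the deleted-vertex coloring used in the lower bound.
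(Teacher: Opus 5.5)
The upper-bound skeleton is the paper's, and your lower bound is a valid alternative: the paper reads off $17=\frac{5\cdot 10+1}{3}$ from Theorem~\ref{thm:General-Lower-Bound}, whereas your use of Theorem~\ref{critlower} with $n=2$, $k=8$ (rook's-graph coloring minus a vertex, so monochromatic degrees at most $4$ and no monochromatic $K_{1,5}$) also gives $17$.

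\textbf{The gap.} The case $(k_1,k_2)=(9,8)$ relies on $r(B_1,B_3)=8$, and that value is false. Your recalled formula $r(K_3,B_n)=2n+2$ already contradicts $r(B_1,B_2)=7$, which you use elsewhere. The correct formula is $r(K_3,B_n)=2n+3$ for $n\ge 2$, so $r(B_1,B_3)=9$. Concretely, split $X_2$ into two $4$-sets, color the edges inside each set green, and color all edges between them red. This is a Gallai coloring of $X_2$ in which the red/blue graph is $K_{4,4}$, so there is no red/blue triangle. The green graph is $2K_4$, so there is no green $B_3$. Your dichotomy therefore fails on $8$ vertices, and as written this case is not proved. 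The repair is what the paper does: since $8\ge r(B_1,B_2)=7$, $X_2$ contains a red/blue $B_1$ or a green $B_2$. Adding the $9$ vertices of $X_1$ as pages then gives a red/blue $B_{10}$ or a red/green $B_{11}$.

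\textbf{Page counts.} Your counts in the $(9,7)$ and $(8,8)$ subcases are off by one. A red/blue $B_1$ in $Y_2$ with pages $Y_1\cup\{x\}$ has $1+\ell_1+1$ pages, so for $(8,8)$ it already yields a red/blue $B_{10}$. In the other alternative, a green $B_2$ (again from $r(B_1,B_2)=7$) with pages $Y_1$ has $2+8=10$ pages. So $(8,8)$ is not an obstacle; the paper handles every split with $\ell_1\le 9$ (hence $\ell_2\ge 7$) in a single step this way. Your degree-counting argument for $(8,8)$ is correct and never uses the false Ramsey value, so it can stay, but it is unnecessary.
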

\begin{proof}
The lower bound $gr_2^3(B_{10}) \geq 17$ follows from Theorem~\ref{thm:General-Lower-Bound}. To prove the reverse inequality, consider a Gallai 3-coloring of $K_{17}$, with base graph $\mathcal{B}$, chosen to have minimal order. Since
\[
\frac{2 \cdot 17}{17 - 10 + 1} < 5 \quad \text{and} \quad \frac{5 \cdot 10}{3} \leq 17,
\]	
then by Theorem~\ref{Gallaistruct} and Lemmas~\ref{not3}, \ref{RemoveLargeBaseGraph}, and \ref{lem:no-four}, it suffices to consider the case where $|V(\mathcal{B})|=2$. Denote the vertex sets for the blocks by $X_1$ and $X_2$, with $k_i := |X_i|$ and the indexing such that $k_1 \geq k_2$. Without loss of generality, suppose the edges joining $X_1$ and $X_2$ are red. If $k_1 \geq 10$ and $k_2 \geq 2$, then we get a $B_{10}$ whose edges use at most two colors by choosing a spine in $X_2$ and any $10$ vertices in $X_1$ as pages. Thus, it suffices to consider the following cases. 

\underline{Case 1:} Assume that $(k_1,k_2) = (9,8)$. Since $k_2 \geq r(B_1,B_2)=7$ \cite{CH3}, the subgraph induced by $X_2$ must contain either a red/blue $B_1$, or a green $B_2$. Adding the nine vertices in $X_1$ as pages, in the former case we get a red/blue $B_{10}$, whereas in the latter case we get a red/green $B_{11}$ (which contains a $B_{10}$). 

\underline{Case 2:} Assume that $(k_1,k_2) = (16,1)$. Denote the single vertex in $X_2$ by $x$. Using Theorem~\ref{Gallaistruct}, let $\mathcal{B}'$ be the base graph for the subgraph induced by $X_1$, chosen to have minimal order. Since 
\[
\frac{2 \cdot 16}{16 - 10 + 1} < 5,
\]
then by Lemmas~\ref{not3} and \ref{RemoveLargeBaseGraph}, it suffices to consider the cases where $|V(\mathcal{B}')| \in \{2,4\}$. 

Let us first assume that $|V(\mathcal{B}')|=4$. If $\mathcal{B}'$ uses the two colors other than red, then since $17 \geq \frac{5 \cdot 10 - 1}{3}$ and $x$ is a vertex incident with edges in only red, Lemma~\ref{mainlem2} implies the existence of a $B_{10}$ whose edges use at most two colors. Hence, we may assume that one of the colors used in $\mathcal{B}'$ is red. Denote the vertex sets for the blocks in $\mathcal{B}'$ by $Y_1$, $Y_2$, $Y_3$, and $Y_4$, with $\ell_i := |Y_i|$ and the indexing such that $\ell_1 \geq \ell_2 \geq \ell_3 \geq \ell_4$. Since $\ell_4 \leq \lfloor \frac{16}{4} \rfloor = 4$, it follows that $\ell_1 + \ell_2 + \ell_3 \geq 16 - 4 = 12$. Let $y$ be any vertex in $Y_4$. Taking the red edge $xy$ as a spine and adding $Y_1 \cup Y_2 \cup Y_3$ as pages, we get a book with at least $12$ pages, and whose edges use at most two colors.

Now suppose that $|V(\mathcal{B}')|=2$. 
Without loss of generality, assume that the edge in $\mathcal{B}'$ is red or blue.  Denote the vertex sets for the blocks in $\mathcal{B}'$ by $Y_1$ and $Y_2$, with $\ell_i := |Y_i|$ and the indexing such that $\ell_1 \geq \ell_2$. If $\ell_1 \geq 10$ and $\ell_2 \geq 2$, then a $B_{10}$ whose edges use at most two colors can be formed using an edge in $Y_2$ as a spine with any $10$ vertices in $Y_1$ as pages. Thus, it suffices to consider the following cases. 

\underline{Subcase 2.1:} Assume that $\ell_1 \leq 9$ and $\ell_2 \geq 2$. Since $\ell_1 \leq 9$, we have that $\ell_2 \geq 16 - 9 = 7$. As $\ell_2 \geq r(B_1,B_2)=7$ \cite{CH3}, we know that the subgraph induced by $Y_2$ contains either a red/blue $B_1$, or a green $B_2$. Note that $\ell_1 \geq \lceil \frac{16}{2} \rceil = 8$. In the former case, using the vertex $x$ and all vertices in $Y_1$ as pages yields a red/blue book with at least $10$ pages. In the latter case, using the vertices in $Y_1$ as pages yields a blue/green book with at least $10$ pages. 

\underline{Subcase 2.2:} Assume that $(\ell_1,\ell_2) = (15,1)$. Denote the single vertex in $Y_2$ by $y$. Then using the red edge $xy$ as a spine, we get a red/blue $B_{15}$ (which contains a $B_{10}$) by adding the vertices in $Y_1$ as pages.

In all cases, a $B_{10}$ whose edges use at most two colors exists, from which it follows that $gr_2^3(B_{10}) \le 17$.
\end{proof}

\begin{theorem}
	$gr_2^3(B_{11}) = 19$.
\end{theorem}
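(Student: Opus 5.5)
The lower bound $gr^3_2(B_{11})\ge 19$ should come from Theorem~\ref{critlower} with $n=2$ and $k=9$, so that $n+k=11$ and $2k+1=19$. Here $k=9<r(B_2,B_2)=10$. For $\mathcal{G}$ I would take the Chv\'atal--Harary blue/green coloring of $K_9$ in Figure~\ref{CH1.2}. It is a critical coloring for $r^2(B_2)$, so it has no monochromatic $B_2$. Each color class is $4$-regular, so it has no monochromatic $K_{1,\floor{(2+9-1)/2}+1}=K_{1,6}$. Theorem~\ref{critlower} then gives the lower bound.

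For the upper bound, take a Gallai $3$-coloring of $K_{19}$ whose base graph $\mathcal{B}$ has minimal order. Since $\frac{2\cdot 19}{19-11+1}=\frac{38}{9}<5$ and $\frac{5\cdot 11}{3}\le 19$, Lemmas~\ref{not3}, \ref{RemoveLargeBaseGraph}, and \ref{lem:no-four} reduce us to $|V(\mathcal{B})|=2$. Let the blocks be $X_1,X_2$ with $k_1\ge k_2$, joined by red edges. If $k_1\ge 11$ and $k_2\ge 2$, a spine in $X_2$ with $11$ pages in $X_1$ gives the book. This leaves $(k_1,k_2)\in\{(10,9),(18,1)\}$.

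In the case $(10,9)$, I would apply $r(B_1,B_3)=9$ to $X_2$, using the standard value $r(K_3,B_n)=2n+3$ from \cite{CH3}, viewing red/blue as one color and green as the other. A red/blue triangle in $X_2$ gives a red/blue $B_{11}$: its spine is an edge of the triangle, and its pages are the third vertex together with $X_1$. A green $B_3$ in $X_2$ gives a red/green book with $3+10=13$ pages. I expect this case to be the main obstacle, because it is the only place needing a Ramsey value beyond those already cited in the paper. If that citation is unwanted, I would instead argue directly: either every vertex of $X_2$ has at most two red/blue neighbors inside $X_2$, which yields a green $B_3$ in the resulting dense green graph on $9$ vertices, or a red/blue triangle appears.

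In the case $(18,1)$, let $x$ be the single vertex of $X_2$ and let $\mathcal{B}'$ be a minimal base graph for $X_1$. Since $\frac{2\cdot 18}{18-11+1}=\frac{36}{8}<5$, Lemmas~\ref{not3} and \ref{RemoveLargeBaseGraph} leave $|V(\mathcal{B}')|\in\{2,4\}$.
\begin{itemize}
\item[(a)] If $|V(\mathcal{B}')|=4$ and $\mathcal{B}'$ avoids red, Lemma~\ref{mainlem2} applies because $19\ge\frac{5\cdot 11-1}{3}$.
\item[(b)] If $|V(\mathcal{B}')|=4$ and $\mathcal{B}'$ uses red, then $\ell_4\le 4$. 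The red spine $xy$ with $y\in Y_4$ and pages $Y_1\cup Y_2\cup Y_3$ gives at least $14$ pages.
\item[(c)] If $|V(\mathcal{B}')|=2$, assume without loss of generality that its edge is red or blue, with blocks $Y_1,Y_2$ and $\ell_1\ge\ell_2$. The subcase $\ell_1\ge 11$, $\ell_2\ge 2$ is immediate. The subcase $(17,1)$ gives a red/blue book with spine $xy$ and $17$ pages.
\item[(d)] The remaining subcases $(10,8)$ and $(9,9)$ have $\ell_2\ge 7=r(B_1,B_2)$. A red/blue triangle in $Y_2$ gives a red/blue book whose pages are the third vertex, $x$, and $Y_1$, at least $\ell_1+2\ge 11$ pages. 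A green $B_2$ in $Y_2$ gives a book in green plus the $\mathcal{B}'$ color with at least $\ell_1+2\ge 11$ pages.
\end{itemize}
This completes the case analysis, giving $gr^3_2(B_{11})\le 19$.
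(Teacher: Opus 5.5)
Your proof is correct and follows the paper's argument essentially step for step: the same lower-bound construction via Theorem~\ref{critlower}, the same reduction to a base graph of order $2$, and the same treatment of the $(18,1)$ case. The only difference is the $(10,9)$ case, which you flagged as the main obstacle. There the paper applies $r^2(B_2)=10$ to the larger block $X_1$ and uses the nine vertices of $X_2$ as extra pages ($2+9=11$), so it needs no Ramsey value beyond those already cited. Your route through $r(B_1,B_3)=9$ also works. However, that value is the formula $r(B_1,B_n)=2n+3$ for $n\ge 2$, usually credited to Rousseau and Sheehan \cite{RS}, rather than \cite{CH3}. Drop your fallback ``direct'' argument: its second branch fails, because a vertex with three or more red/blue neighbours in $X_2$ does not by itself force a red/blue triangle.
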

\begin{proof}
The lower bound $gr_2^3(B_{11}) \geq 19$ follows from Theorem \ref{critlower} by taking $\mathcal{G}$ to be the blue/green coloring of $K_9$ given in Figure \ref{CH1.2}.
To prove the reverse inequality, consider a Gallai 3-coloring of $K_{19}$, with base graph $\mathcal{B}$, chosen to have minimal order. Since
\[
\frac{2 \cdot 19}{19 - 11 + 1} < 5 \quad \text{and} \quad \frac{5 \cdot 11}{3} \leq 19,
\]	
then by Theorem~\ref{Gallaistruct} and Lemmas~\ref{not3}, \ref{RemoveLargeBaseGraph}, and \ref{lem:no-four}, it suffices to consider the case where $|V(\mathcal{B})|=2$. Denote the vertex sets for the blocks by $X_1$ and $X_2$, with $k_i := |X_i|$ and the indexing such that $k_1 \geq k_2$. Without loss of generality, suppose the edge joining $X_1$ and $X_2$ is red. If $k_1 \geq 11$ and $k_2 \geq 2$, then we get a $B_{11}$ whose edges use at most two colors by choosing a spine in $X_2$ and any $11$ vertices in $X_1$ as pages. Thus, it suffices to consider the following cases. 

\underline{Case 1:} Assume that $(k_1,k_2) = (10,9)$. Since $k_1 = r^2(B_2)$ \cite{CH2}, the subgraph induced by $X_1$ must contain either a red/blue $B_2$, or a green $B_2$. Adding the nine vertices in $X_2$ as pages, in the former case we get a red/blue $B_{11}$, whereas in the latter case we get a red/green $B_{11}$. 

\underline{Case 2:} Assume that $(k_1,k_2) = (18,1)$. Denote the single vertex in $X_2$ by $x$. Using Theorem~\ref{Gallaistruct}, let $\mathcal{B}'$ be the base graph for the induced graph on $X_1$, chosen to have minimal order. Since 
\[
\frac{2 \cdot 18}{18 - 11 + 1} < 5,
\]
then by Lemmas~\ref{not3} and \ref{RemoveLargeBaseGraph}, it suffices to consider the cases where $|V(\mathcal{B}')| \in \{2,4\}$. 

Let us first assume that $|V(\mathcal{B}')|=4$. If $\mathcal{B}'$ uses the two colors other than red, then since $19 \geq \frac{5 \cdot 11 - 1}{3}$ and $x$ is a vertex incident with edges in only red, Lemma~\ref{mainlem2} implies the existence of a $B_{11}$ whose edges use at most two colors. Hence, we may assume that one of the colors used in $\mathcal{B}'$ is red. Denote the vertex sets for the blocks in $\mathcal{B}'$ by $Y_1$, $Y_2$, $Y_3$, and $Y_4$, with $\ell_i := |Y_i|$ and the indexing such that $\ell_1 \geq \ell_2 \geq \ell_3 \geq \ell_4$. Since $\ell_4 \leq \lfloor \frac{18}{4} \rfloor = 4$, it follows that $\ell_1 + \ell_2 + \ell_3 \geq 18 - 4 = 14$. Let $y$ be any vertex in $Y_4$. Taking the red edge $xy$ as a spine and adding $Y_1 \cup Y_2 \cup Y_3$ as pages, we get a book with at least $14$ pages, and whose edges use at most two colors.

Now suppose that $|V(\mathcal{B}')|=2$. 
Without loss of generality, assume that the edge in $\mathcal{B}'$ is red or blue.  Denote the vertex sets for the blocks in $\mathcal{B}'$ by $Y_1$ and $Y_2$, with $\ell_i := |Y_i|$ and the indexing such that $\ell_1 \geq \ell_2$. If $\ell_1 \geq 11$ and $\ell_2 \geq 2$ then a $B_{11}$ whose edges use at most two colors can be formed using an edge in $Y_2$ as a spine with any $11$ vertices in $Y_1$ as pages. Thus, it suffices to consider the following subcases. 

\underline{Subcase 2.1:} Assume that $\ell_1 \leq 10$ and $\ell_2 \geq 2$. Since $\ell_1 \leq 10$, we have that $\ell_2 \geq 18 - 10 = 8$. As $\ell_2 \geq r(B_1,B_2)$ \cite{CH3}, we know that the subgraph induced by $Y_2$ contains either a red/blue $B_1$, or a green $B_2$. Note that $\ell_1 \geq \lceil \frac{18}{2} \rceil = 9$. In the former case, using the vertex $x$ and all vertices in $Y_1$ as pages yields a red/blue book with at least $11$ pages. In the latter case, using the vertices in $Y_1$ as pages yields a blue/green book with at least $11$ pages. 

\underline{Subcase 2.2:} Assume that $(\ell_1,\ell_2) = (17,1)$. Denote the single vertex in $Y_2$ by $y$. Then using the red edge $xy$ as a spine, we get a red/blue $B_{17}$ (which contains a $B_{11}$) by adding the vertices in $Y_1$ as pages.

In all cases, a $B_{11}$ whose edges use at most two colors exists, from which it follows that $gr_2^3(B_{11}) \le 19$.
\end{proof}

In order to obtain the lower bound for $gr_2^3(B_{12})$, we will need to consider the critical coloring for $r^2(B_3)=14$ described in \cite{RS}.  Start by considering the Paley graph of order $13$, which is the $6$-regular graph shown in Figure \ref{RS1.1}.
\begin{figure}[h!]
\centering
\begin{subfigure}{.4\textwidth}
    \centering
    \includegraphics[width=1\linewidth]{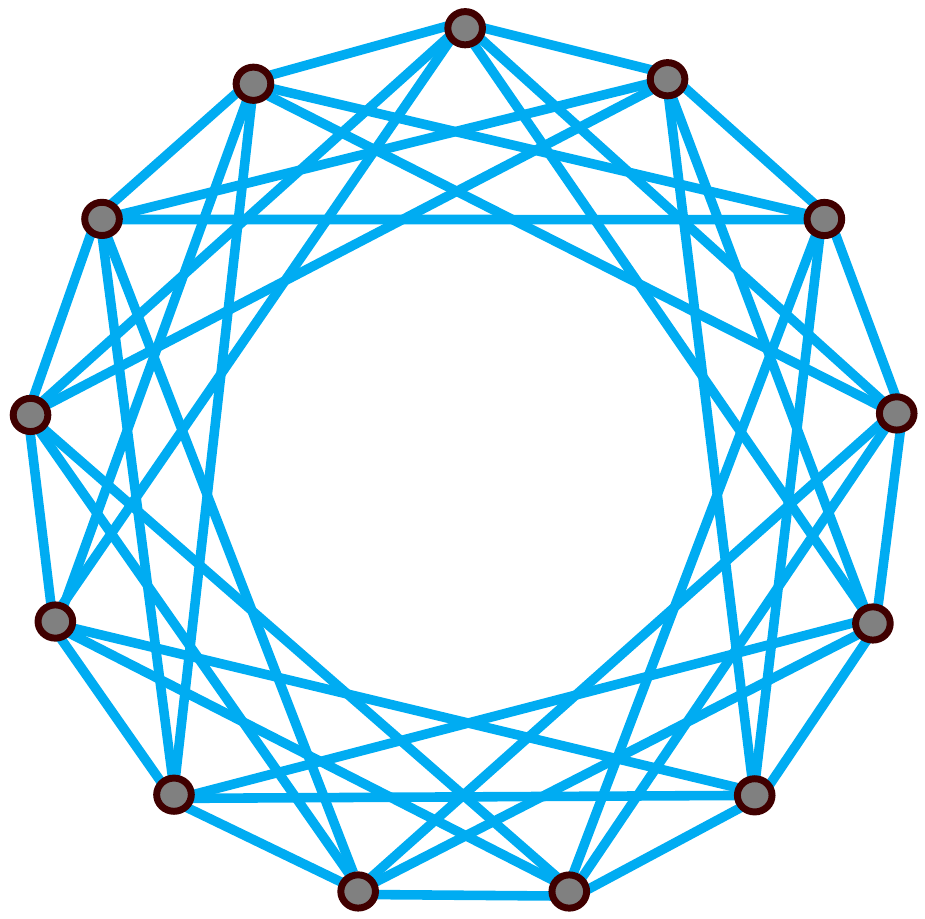}  
    \caption{}
    \label{RS1.1}
\end{subfigure}\qquad \qquad
\begin{subfigure}{.4\textwidth}
    \centering
    \includegraphics[width=1\linewidth]{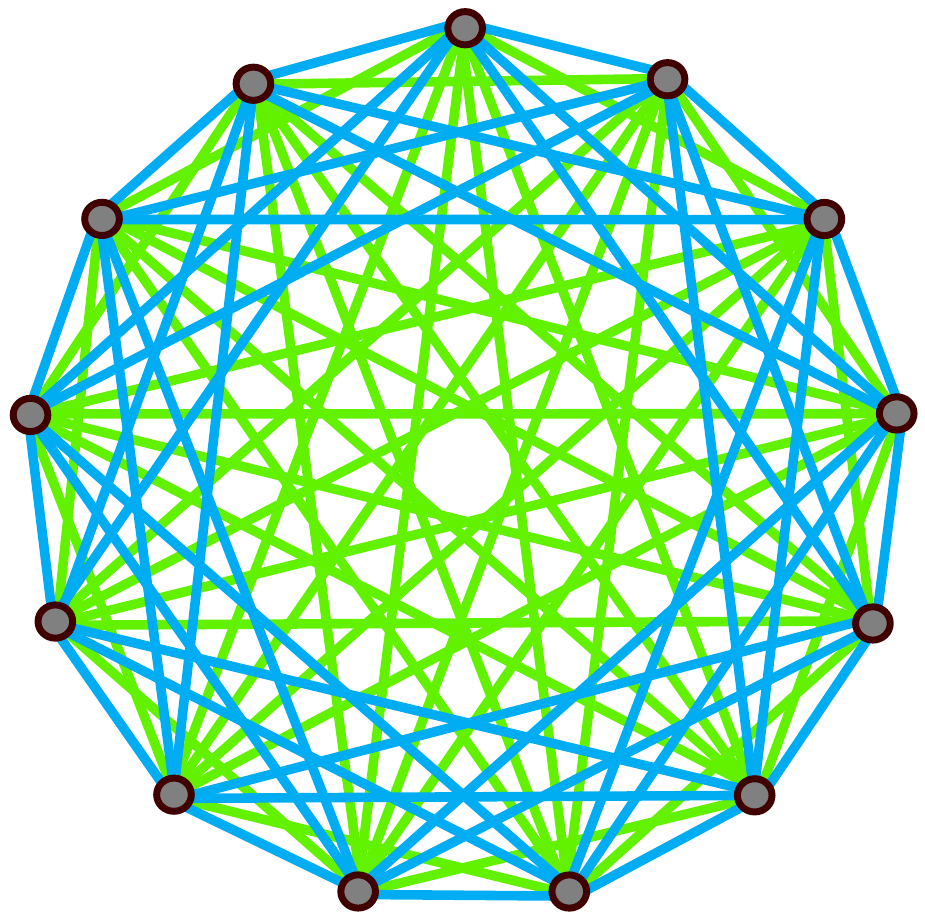}  
    \caption{}
    \label{RS1.2}
\end{subfigure}
\caption{The critical coloring for $r^2(B_3)=14$ given by Rousseau and Sheehan in \cite{RS}.}\label{RSlowerbound}
\end{figure}
It is self-complementary, and Figure \ref{RS1.2} shows this graph and its complement in blue and green, which is a critical coloring for $r^2(B_3)$.  

\begin{theorem}
	$gr_2^3(B_{12}) = 20$.
\end{theorem}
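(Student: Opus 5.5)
For the lower bound I would apply Theorem \ref{critlower} with $n=3$ and $k=9$, so that $n+k=12$ and $2k+1=19$. That only gives $gr^3_2(B_{12})\ge 19$, so I need something better. With $k=13$ and $n$ chosen so that $n+k=12$ does not work either. Instead I would build a direct construction on $K_{19}$. First take a red $K_2$ and replace its two vertices by blocks of orders $10$ and $9$. The $10$-vertex block carries a blue/green coloring. I would obtain it by deleting three vertices from the Rousseau--Sheehan Paley colouring of $K_{13}$ in Figure \ref{RS1.2}; this coloring has no monochromatic $B_3$, and every vertex has degree at most $6$ in each colour. The $9$-vertex block is the Chv\'atal--Harary coloring of Figure \ref{CH1.2}.

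The checks are as follows.
\begin{itemize}
\item A red/blue book with a red spine has at most $2\cdot(\text{max blue degree})$ pages, and I need this to be at most $11$.
\item A red/blue book with a blue spine inside the $10$-block has fewer than $3$ blue pages inside plus $9$ across, so at most $2+9=11$ pages.
\item A blue spine inside the $9$-block gives at most $1+10=11$ pages.
\item Green is handled symmetrically.
\end{itemize}
The red-spine case is the delicate one. Degrees in the $10$-block can reach $6$, which would give $12$ pages, so I would choose the three deleted vertices to lower the blue and green degrees. Alternatively, I would verify from the known structure that the common neighbourhood in each colour is small, and adjust the block sizes to $(10,9)$ accordingly. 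I expect fixing this construction to be the main obstacle.

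For the upper bound, I would take a Gallai $3$-coloring of $K_{20}$ with a base graph of minimal order. Since $\frac{2\cdot 20}{9}<5$ and $\frac{5\cdot12}{3}\le 20$, Lemmas \ref{not3}, \ref{RemoveLargeBaseGraph} and \ref{lem:no-four} reduce to base order $2$, with red edges between the blocks $X_1,X_2$ and $k_1\ge k_2$. If $k_1\ge12$ and $k_2\ge2$ we are done. This leaves $(k_1,k_2)\in\{(11,9),(10,10),(19,1)\}$.
\begin{itemize}
\item For $(11,9)$: the $9$-block has $9\ge r(B_1,B_2)=7$, so it contains a red/blue $B_1$ or a green $B_2$, giving $12$ or $13$ pages.
\item For $(10,10)$: $r^2(B_2)=10$ gives a red/blue $B_2$ or a green $B_2$ in one block, so $10+2=12$ pages.
\item For $(19,1)$, with lone vertex $x$: I take the base graph of $X_1$. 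Since $\frac{2\cdot 19}{8}<5$, its order is $2$ or $4$. Order $4$ is handled exactly as in the previous theorems, using Lemma \ref{mainlem2} since $20\ge\frac{59}{3}$, or a red spine $xy$ with at least $19-4=15$ pages.
\end{itemize}

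For order $2$ in the $(19,1)$ case, with blocks $Y_1,Y_2$, $\ell_1\ge \ell_2$, and red/blue between them, the cases are as follows.
\begin{itemize}
\item If $\ell_1\ge12$ and $\ell_2\ge2$ we are done.
\item If $(\ell_1,\ell_2)=(18,1)$, the spine $xy$ gives $18$ pages.
\item Otherwise $\ell_1\le 11$, so $\ell_2\ge 8\ge r(B_1,B_2)$. A red/blue $B_1$ in $Y_2$ together with $Y_1\cup\{x\}$ gives at least $10+1+1=12$ pages. A green $B_2$ in $Y_2$ together with $Y_1$ gives at least $10+2=12$ pages.
\end{itemize}
The page counts here need $\ell_1\ge10$, which holds since $\ell_1\ge \lceil 19/2\rceil=10$.
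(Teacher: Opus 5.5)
Your upper bound is the paper's argument essentially line for line: reduction to base order $2$, the cases $(10,10)$, $(11,9)$, $(19,1)$ via $r^2(B_2)=10$ and $r(B_1,B_2)=7$, and Lemma \ref{mainlem2} or a red spine $xy$ when $X_1$ has base order $4$. Your lower-bound construction is also exactly the paper's: a red $K_2$ whose vertices are replaced by the Chv\'atal--Harary coloring of $K_9$ and by the Rousseau--Sheehan coloring of $K_{13}$ with three vertices deleted.

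The one gap is that you leave the lower bound unverified, and your reason for calling the red-spine case delicate is a miscount. The bound $2\cdot(\text{max blue degree})$ is the count from Theorem \ref{critlower}, where both blocks carry the same coloring; it does not apply here.

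In this construction a red spine $uv$ must have $u$ in the $10$-vertex block $A$ and $v$ in the $9$-vertex block $C$. The pages of a red/blue book on it are therefore exactly the blue neighbours of $u$ in $A$ together with the blue neighbours of $v$ in $C$. That gives at most $6+4=10\le 11$ pages, since the $K_9$ coloring is $4$-regular in each color. So no special choice of the three deleted vertices and no change of block sizes is needed.

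The rest of the verification then closes the argument:
\begin{itemize}
\item your blue-spine counts $2+9$ and $1+10$ are correct;
\item the green cases are symmetric;
\item a blue/green book must lie inside a single block, so it has at most $8$ pages;
\item the coloring is Gallai, because any triangle meeting both blocks has two red edges.
\end{itemize}
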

\begin{proof}
In order to obtain the lower bound, begin by letting $\mathcal{G}_1$ be the blue/green coloring of $K_9$ given in Figure \ref{CH1.2} and $\mathcal{G}_2$ be the blue/green coloring of $K_{10}$ formed by taking the graph in Figure \ref{RS1.2}, and deleting three of its vertices.  Next, take a red $K_2$ and replace one of its vertices with $G_1$ and the other vertex with $G_2$, forming a Gallai $3$-coloring of $K_{19}$.  The largest red/blue book with a red spine has $10$ pages, the largest red/blue book with a blue spine has $11$ pages, and the same is true for the largest red/green books.  The largest blue/green book has eight pages.  It follows that $gr_2^3(B_{12}) \geq 20$.

To prove the reverse inequality, consider a Gallai 3-coloring of $K_{20}$, with base graph $\mathcal{B}$, chosen to have minimal order. Since
\[
\frac{2 \cdot 20}{20 - 12 + 1} < 5 \quad \text{and} \quad \frac{5 \cdot 12}{3} \leq 20,
\]	
then by Theorem~\ref{Gallaistruct} and Lemmas~\ref{not3}, \ref{RemoveLargeBaseGraph}, and \ref{lem:no-four}, it suffices to consider the case where $|V(\mathcal{B})|=2$. Denote the vertex sets for the blocks by $X_1$ and $X_2$, with $k_i := |X_i|$ and the indexing such that $k_1 \geq k_2$. Without loss of generality, suppose the edge joining $X_1$ and $X_2$ is red. If $k_1 \geq 12$ and $k_2 \geq 2$, then we get a $B_{12}$ whose edges use at most two colors by choosing a spine in $X_2$ and any 12 vertices in $X_1$ as pages. Thus, it suffices to consider the following cases. 

\underline{Case 1:} Assume that $(k_1,k_2) = (10,10)$. Since $k_2=r^2(B_2) $ \cite{CH2}, the subgraph induced by $X_2$ must contain either a red/blue $B_2$, or a green $B_2$.  Adding the $10$ vertices in $X_1$ as pages, in the former case we get a red/blue $B_{12}$, whereas in the latter case we get a red/green $B_{12}$. 

\underline{Case 2:} Assume that $(k_1,k_2) = (11,9)$. Since $k_2\ge r(B_1,B_2) = 7$ \cite{CH3}, the subgraph induced by $X_2$ must contain either a red/blue $B_1$, or a green $B_2$. Adding the $11$ vertices in $X_1$ as pages, in the former case we get a red/blue $B_{12}$, whereas in the latter case we get a red/green $B_{13}$ (which contains a $B_{12}$). 

\underline{Case 3:} Assume that $(k_1,k_2) = (19,1)$. Denote the single vertex in $X_2$ by $x$. Using Theorem~\ref{Gallaistruct}, let $\mathcal{B}'$ be the base graph for the induced graph on $X_1$, chosen to have minimal order. Since 
\[
\frac{2 \cdot 19}{19 - 12 + 1} < 5,
\]
then by Lemmas~\ref{not3} and \ref{RemoveLargeBaseGraph}, it suffices to consider the cases where $|V(\mathcal{B}')| \in \{2,4\}$. 

Let us first assume that $|V(\mathcal{B}')|=4$. If $\mathcal{B}'$ uses the two colors other than red, then since $20 \geq \frac{5 \cdot 12 - 1}{3}$ and $x$ is a vertex incident with edges in only red, Lemma~\ref{mainlem2} implies the existence of a $B_{12}$ whose edges use at most two colors. Hence, we may assume that one of the colors used in $\mathcal{B}'$ is red. Denote the vertex sets for the blocks in $\mathcal{B}'$ by $Y_1$,$Y_2$, $Y_3$, and$Y_4$, with $\ell_i := |Y_i|$ and the indexing such that $\ell_1 \geq \ell_2 \ge \ell_3 \geq \ell_4$. Since $\ell_4 \leq \lfloor \frac{19}{4} \rfloor = 4$, it follows that $\ell_1 + \ell_2 + \ell_3 \geq 19 - 4 = 15$. Let $y$ be any vertex in $Y_4$. Taking the red edge $xy$ as a spine and adding $Y_1 \cup Y_2 \cup Y_3$ as pages, we get a book with at least $15$ pages, and whose edges use at most two colors.

Now suppose that $|V(\mathcal{B}')|=2$. 
Without loss of generality, assume that the edge in $\mathcal{B}'$ is red or blue.  Denote the vertex sets for the blocks in $\mathcal{B}'$ by $Y_1$ and $Y_2$, with $\ell_i := |Y_i|$ and the indexing such that $\ell_1 \geq \ell_2$. If $\ell_1 \geq 12$ and $\ell_2 \geq 2$ then a $B_{12}$ whose edges use at most two colors can be formed using an edge in $Y_2$ as a spine with any $12$ vertices in $Y_1$ as pages. Thus, it suffices to consider the following subcases. 

\underline{Subcase 3.1:} Assume that $\ell_1 \leq 11$ and $\ell_2 \geq 2$. Since $\ell_1 \leq 11$, we have that $\ell_2 \geq 19 - 11 = 8$. As $\ell_2 \geq r(B_1,B_2)=7$ \cite{CH3}, we know that the subgraph induced by $Y_2$ contains either a red/blue $B_1$, or a green $B_2$. Note that $\ell_1 \geq \lceil \frac{19}{2} \rceil = 10$. In the former case, using the vertex $x$ and all vertices in $Y_1$ as pages yields a red/blue book with at least $12$ pages. In the latter case, using the vertices in $Y_1$ as pages yields a blue/green book with at least $12$ pages. 

\underline{Subcase 3.2:} Assume that $(\ell_1,\ell_2) = (18,1)$. Denote the single vertex in $Y_2$ by $y$. Then using the red edge $xy$ as a spine, we get a red/blue $B_{18}$ (which contains a $B_{12}$) by adding the vertices in $Y_1$ as pages.

In all cases, a $B_{12}$ whose edges use at most two colors exists, from which it follows that $gr_2^3(B_{12}) \le 20$.
\end{proof}

\begin{theorem}
	$gr_2^3(B_{13}) = 22$. 
\end{theorem}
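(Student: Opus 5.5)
The lower bound comes directly from Theorem \ref{thm:General-Lower-Bound}: since $13\equiv 1\pmod{3}$, it gives $gr^3_2(B_{13})\ge \frac{5\cdot 13+1}{3}=22$. No new construction is needed. I do not expect the two-block construction used for $B_{12}$ to help here. With blocks of orders $10$ and $11$, a blue spine inside the smaller block would need that block to avoid a monochromatic $B_2$, which is impossible because $r^2(B_2)=10$. The Gallai-partition construction of Case 2 of Theorem \ref{thm:General-Lower-Bound} already achieves $22$.

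For the upper bound I will follow the template of the previous theorems. Take a Gallai $3$-coloring of $K_{22}$ and a base graph $\mathcal{B}$ of minimal order. We have $\frac{2\cdot 22}{22-13+1}=4.4<5$ and $\frac{5\cdot 13}{3}\le 22$. So Theorem~\ref{Gallaistruct} together with Lemmas~\ref{not3}, \ref{RemoveLargeBaseGraph} and \ref{lem:no-four} reduces to $|V(\mathcal{B})|=2$, with blocks $X_1,X_2$ joined in red and $k_1\ge k_2$. If $k_1\ge 13$ and $k_2\ge 2$, a spine in $X_2$ with $13$ pages in $X_1$ finishes. This leaves three cases.
\begin{itemize}
\item $(k_1,k_2)=(11,11)$. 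Since $k_2\ge r^2(B_2)=10$ \cite{CH2}, $X_2$ contains a red/blue $B_2$ or a green $B_2$. Adding the $11$ vertices of $X_1$ as pages gives a two-colored book with $13$ pages.
\item $(k_1,k_2)=(12,10)$. The same argument gives at least $14$ pages.
\item $(k_1,k_2)=(21,1)$. This is the main case.
\end{itemize}

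In the case $(21,1)$, let $x$ be the single (all-red) vertex and let $\mathcal{B}'$ be a minimal base graph for $X_1$. Since $\frac{2\cdot 21}{21-13+1}=\frac{42}{9}<5$, Lemmas~\ref{not3} and \ref{RemoveLargeBaseGraph} give $|V(\mathcal{B}')|\in\{2,4\}$.

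If $|V(\mathcal{B}')|=4$ and red is not used in $\mathcal{B}'$, then Lemma~\ref{mainlem2} applies, because $22\ge \frac{5\cdot 13-1}{3}$. If red is used, then $\ell_4\le\lfloor 21/4\rfloor=5$, so $\ell_1+\ell_2+\ell_3\ge 16$. Taking the spine $xy$ with $y\in Y_4$ and pages $Y_1\cup Y_2\cup Y_3$ gives a two-colored $B_{16}$.

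If $|V(\mathcal{B}')|=2$, say the blocks $Y_1,Y_2$ are joined in red or blue, with $\ell_1\ge \ell_2$.
\begin{itemize}
\item If $\ell_1\ge 13$ and $\ell_2\ge 2$, we are done as before.
\item If $\ell_2=1$, then the red edge $xy$ with pages $Y_1$ gives a red/blue $B_{20}$.
\item Otherwise $\ell_1\le 12$, so $\ell_2\ge 9\ge r(B_1,B_2)=7$ \cite{CH3} and $\ell_1\ge 11$. Then $Y_2$ contains a red/blue $B_1$ or a green $B_2$. In the first case, the pages are $x$, the vertices of $Y_1$ and the page of the $B_1$, giving at least $13$ pages in red/blue. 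In the second case, the pages are $Y_1$ plus the two pages of the green $B_2$, giving at least $13$ pages in blue/green.
\end{itemize}

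The only delicate point is checking these page counts in the $(21,1)$ case. Each one clears $13$ with no slack to spare, so they have to be verified exactly rather than estimated.
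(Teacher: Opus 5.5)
Your proposal is correct and matches the paper's proof step for step. It uses the same lower bound from Theorem~\ref{thm:General-Lower-Bound} and the same reduction to a two-block base graph via Lemmas~\ref{not3}, \ref{RemoveLargeBaseGraph} and \ref{lem:no-four}, where you simply list the paper's case $k_1\le 12$ as the pairs $(11,11)$ and $(12,10)$. It then handles the $(21,1)$ case with Lemma~\ref{mainlem2} and $r(B_1,B_2)=7$ just as the paper does. Your page counts are right, including the tight counts $1+11+1$ and $2+11$ when $(\ell_1,\ell_2)=(11,10)$. One harmless slip, which the paper shares: the green-$B_2$ book is red/green rather than blue/green when the edge of $\mathcal{B}'$ is red.
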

\begin{proof}
The lower bound $gr_2^3(B_{13}) \geq 22$ follows from Theorem~\ref{thm:General-Lower-Bound}. To prove the reverse inequality, consider a Gallai 3-coloring of $K_{22}$, with base graph $\mathcal{B}$, chosen to have minimal order. Since
\[
\frac{2 \cdot 22}{22 - 13 + 1} < 5 \quad \text{and} \quad \frac{5 \cdot 13}{3} \leq 22,
\]	
then by Theorem~\ref{Gallaistruct} and Lemmas~\ref{not3}, \ref{RemoveLargeBaseGraph}, and \ref{lem:no-four}, it suffices to consider the case where $|V(\mathcal{B})|=2$. Denote the vertex sets for the blocks by $X_1$ and $X_2$, with $k_i := |X_i|$ and the indexing such that $k_1 \geq k_2$. Without loss of generality, suppose the edge joining $X_1$ and $X_2$ is red. If $k_1 \geq 13$ and $k_2 \geq 2$ then we get a $B_{13}$ whose edges use at most two colors by choosing a spine in $X_2$ and any $13$ vertices in $X_1$ as pages. Thus, it suffices to consider the following cases. 

\underline{Case 1:} Assume that $k_1 \leq 12$ and $k_2 \geq 2$. Since $k_1 \leq 12$, we have that $k_2 \geq 22 - 12 = 10$. As $k_2 \geq r^2(B_2)$ \cite{CH2}, we know that the subgraph induced by $X_2$ contains either a red/blue $B_2$, or a green $B_2$. Since $k_1 \geq \lceil \frac{22}{2} \rceil = 11$, then adding the vertices in $X_1$ as pages yields a book with at least $13$ pages, and whose edges use at most two colors (either red/blue or red/green). 

\underline{Case 2:} Assume that $(k_1,k_2) = (21,1)$. Denote the single vertex in $X_2$ by $x$. Using Theorem~\ref{Gallaistruct}, let $\mathcal{B}'$ be the base graph for the induced graph on $X_1$, chosen to have minimal order. Since 
\[
\frac{2 \cdot 21}{21 - 13 + 1} < 5,
\]
then by Lemmas~\ref{not3} and \ref{RemoveLargeBaseGraph}, it suffices to consider the cases where $|V(\mathcal{B}')| \in \{2,4\}$. 

Let us first assume that $|V(\mathcal{B}')|=4$. If $\mathcal{B}'$ uses the two colors other than red, then since $22 \geq \frac{5 \cdot 13 - 1}{3}$ and $x$ is a vertex incident with edges in only red, Lemma~\ref{mainlem2} implies the existence of a $B_{13}$ whose edges use at most two colors. Hence, we may assume that one of the colors used in $\mathcal{B}'$ is red. Denote the vertex sets for the blocks in $\mathcal{B}'$ by $Y_1$, $Y_2$, $Y_3$, and $Y_4$, with $\ell_i := |Y_i|$ and the indexing such that $\ell_1 \geq \ell_2 \geq \ell_3 \geq \ell_4$. Since $\ell_4 \leq \lfloor \frac{21}{4} \rfloor = 5$, it follows that $\ell_1 + \ell_2 + \ell_3 \geq 21 - 5 = 16$. Let $y$ be any vertex in $Y_4$. Taking the red edge $xy$ as a spine and adding $Y_1 \cup Y_2 \cup Y_3$ as pages, we get a book with at least $16$ pages, and whose edges use at most two colors.

Now suppose that $|V(\mathcal{B}')|=2$. 
Without loss of generality, assume that the edge in $\mathcal{B}'$ is red or blue.  Denote the vertex sets for the blocks in $\mathcal{B}'$ by $Y_1$ and $Y_2$, with $\ell_i := |Y_i|$ and the indexing such that $\ell_1 \geq \ell_2$. If $\ell_1 \geq 13$ and $\ell_2 \geq 2$, then a $B_{13}$ whose edges use at most two colors can be formed using an edge in $Y_2$ as a spine with any $13$ vertices in $Y_1$ as pages. Thus, it suffices to consider the following subcases. 

\underline{Subcase 2.1:} Assume that $\ell_1 \leq 12$ and $\ell_2 \geq 2$. Since $\ell_1 \leq 12$, we have that $\ell_2 \geq 21 - 12 = 9$. As $\ell_2 \geq r(B_1,B_2)=7$ \cite{CH3}, we know that the subgraph induced by $Y_2$ contains either a red/blue $B_1$, or a green $B_2$. Note that $\ell_1 \geq \lceil \frac{21}{2} \rceil = 11$. In the former case, using the vertex $x$ and all vertices in $Y_1$ as pages yields a red/blue book with at least $13$ pages. In the latter case, using the vertices in $Y_1$ as pages yields a blue/green book with at least $13$ pages. 

\underline{Subcase 2.2:} Assume that $(\ell_1,\ell_2) = (20,1)$. Denote the single vertex in $Y_2$ by $y$. Then using the red edge $xy$ as a spine, we get a red/blue $B_{20}$ (which contains a $B_{13}$) by adding the vertices in $Y_1$ as pages.

In all cases, a $B_{13}$ whose edges use at most two colors exists, from which it follows that $gr_2^3(B_{13}) \le 22$.
\end{proof}

\begin{theorem}
	$gr_2^3(B_{14}) = 23$. 
\end{theorem}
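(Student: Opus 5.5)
The plan is to prove the two inequalities separately. For the lower bound I will use Theorem \ref{critlower} with $n=3$ and $k=11$, so that $n+k=14$ and $2k+1=23$. This requires $11<r^2(B_3)=14$, which holds. It also requires a blue/green $K_{11}$ with no monochromatic $B_3$ and no monochromatic $K_{1,\floor{13/2}+1}=K_{1,7}$. For this I take the critical coloring of Figure \ref{RS1.2} (Paley graph of order $13$ and its complement) and delete two vertices. It has no monochromatic $B_3$ because it is a subcoloring of a critical coloring for $r^2(B_3)$. Each color class is $6$-regular before deletion, so every monochromatic degree is at most $6$. Theorem \ref{critlower} then gives $gr^3_2(B_{14})\ge 23$.

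For the upper bound, take a Gallai $3$-coloring of $K_{23}$ with minimal base graph $\mathcal{B}$. Since $\frac{2\cdot 23}{23-14+1}<5$, Lemmas \ref{not3} and \ref{RemoveLargeBaseGraph} leave $|V(\mathcal{B})|\in\{2,4\}$. Here $\frac{5\cdot 14}{3}>23$, so Lemma \ref{lem:no-four} does not apply, and the order-$4$ case must be handled directly, as in the $B_8$ proof.

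\emph{Order-$4$ case.} Suppose $\mathcal{B}$ is red/blue with blocks $k_1\ge k_2\ge k_3\ge k_4$.
\begin{itemize}
\item If $k_1+k_2\ge 14$, a spine between $X_3$ and $X_4$ with pages in $X_1\cup X_2$ works.
\item Otherwise $k_1+k_2\le 13$, so $k_3+k_4\ge 10$. Then the three blocks other than any $X_i$ contain at least $23-k_1\ge 15$ vertices.
\item Hence a red or blue edge inside any block gives a red/blue $B_{14}$, so every block is a green clique.
\item Among $X_1,X_2,X_3$, some block joins the other two in a single color, say red. A green spine inside that block, with pages from the other two blocks and the rest of its own block, gives a red/green book with at least $23-k_4-2\ge 16$ pages.
\end{itemize}

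\emph{Order-$2$ case.} Suppose the edges between the blocks are red, with $k_1\ge k_2$.
\begin{itemize}
\item If $k_1\ge 14$ and $k_2\ge 2$, we are done directly.
\item If $k_1\le 13$ and $k_2\ge 10$, then $r^2(B_2)=10$ gives a red/blue or green $B_2$ in $X_2$. Adding the at least $12$ vertices of $X_1$ as pages gives a book with at least $14$ pages.
\item Otherwise $(k_1,k_2)=(22,1)$. Let $x$ be the lone vertex and let $\mathcal{B}'$ be the minimal base graph on $X_1$. Since $\frac{2\cdot 22}{22-14+1}<5$, we get $|V(\mathcal{B}')|\in\{2,4\}$.
\end{itemize}

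\emph{$|V(\mathcal{B}')|=4$.} If $\mathcal{B}'$ avoids red, Lemma \ref{mainlem2} applies since $23\ge \frac{69}{3}$. If $\mathcal{B}'$ uses red, then $\ell_4\le 5$. The red spine $xy$ with $y\in Y_4$, and pages $Y_1\cup Y_2\cup Y_3$, gives at least $17$ pages.

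\emph{$|V(\mathcal{B}')|=2$.} Say $\mathcal{B}'$ is red/blue.
\begin{itemize}
\item $(\ell_1,\ell_2)=(21,1)$: the red spine $xy$ works.
\item $(13,9)$: since $9\ge r(B_1,B_2)=7$, $Y_2$ has a red/blue $B_1$ or a green $B_2$. In the first case use pages $x$, $Y_1$ and the third vertex of the triangle, giving $15$ pages. In the second case use $Y_1$ plus the two $B_2$ pages, giving $15$ pages.
\item $(12,10)$: apply $r^2(B_2)=10$ inside $Y_2$, giving at least $14$ pages.
\end{itemize}

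The main obstacle is $(\ell_1,\ell_2)=(11,11)$, where a green $B_2$ in $Y_2$ yields only $13$ pages. Here I would use the degree argument from Subsubcase 2.2.1 of the $B_8$ proof.
\begin{itemize}
\item If some $y\in Y_1$ has at least $3$ red or blue edges inside $Y_1$, the spine $xy$ with those $3$ neighbors and $Y_2$ as pages gives a red/blue $B_{14}$.
\item Otherwise every $y\in Y_1$ has at least $8$ green neighbors in $Y_1$, and likewise for $Y_2$.
\item Take $y\in Y_1$ and $z\in Y_2$; the edge $yz$ is blue. Using the green neighbors of $y$ and of $z$ as pages gives a blue/green book with at least $16\ge 14$ pages.
\end{itemize}
This completes all cases, giving $gr^3_2(B_{14})\le 23$.
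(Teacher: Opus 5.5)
Your proof is correct. The upper bound follows the paper's case structure almost exactly: the order-$4$ base graph is handled directly by the green-clique argument, then the $r^2(B_2)=10$ case, then the analysis of $\mathcal{B}'$ when $(k_1,k_2)=(22,1)$. Two pieces take a different route.

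\emph{Lower bound.} The paper uses an ad hoc coloring of $K_{22}$ (Figure~\ref{LowerB14}, with largest books of $13$, $12$, $12$ pages). You instead apply Theorem~\ref{critlower} with $n=3$, $k=11$ to the Paley coloring of order $13$ with two vertices deleted. This checks out:
\begin{itemize}
\item a red spine supports at most $6+6=12$ pages;
\item a blue or green spine supports at most $2+11=13$ pages;
\item a blue/green book has at most $9$ pages.
\end{itemize}
The advantage is that verification reduces to the strongly regular parameters $(13,6,2,3)$ of the Paley graph rather than inspection of a figure. It also puts $B_{14}$ in the same family as the paper's own $B_{15}$ construction.

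\emph{The $(11,11)$ subcase.} The paper invokes the external value $r(B_2,B_3)=11$. You use the elementary degree count from Subsubcase 2.2.1 of the $B_8$ proof, which is self-contained and correct. One slip: under your normalization the edges between $Y_1$ and $Y_2$ may be red rather than blue. In that case the $16$-page book is red/green rather than blue/green, which is still two colors, so nothing breaks.

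\emph{Smaller differences.} In the $(13,9)$ subcase you use $r(B_1,B_2)=7$ where the paper uses $r^2(B_1)=6$. For $|V(\mathcal{B}')|=4$ you treat the red case by hand, whereas the paper cites Lemma~\ref{mainlem2}, which covers both cases.

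\emph{Steps to state explicitly.} In the order-$4$ case, the bound $23-k_1\ge 15$ needs $k_1\le 8$. This follows from $k_3+k_4\ge 10$, which gives $k_2\ge k_3\ge 5$. At the $\mathcal{B}'$ level you omit the trivial case $\ell_1\ge 14$, $\ell_2\ge 2$.
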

\begin{proof}
The lower bound $gr_2^3(B_{14}) \geq 23$ follows from the construction shown in Figure \ref{LowerB14}. 
\begin{figure}[h!]
\centerline{
\includegraphics[width=0.55\textwidth]{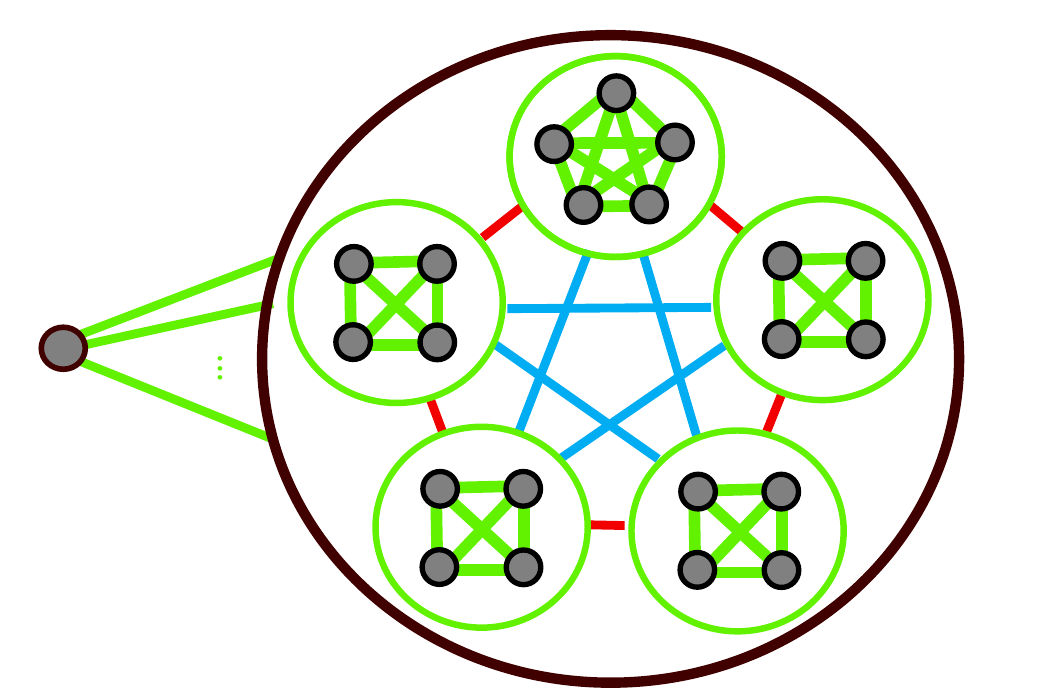}}
\caption{A Gallai $3$-coloring of $K_{22}$ that avoids a $B_{14}$ whose edges use at most two colors.}\label{LowerB14}
\end{figure}
Here, the largest red/blue book has $13$ pages, the largest red/green book has $12$ pages, and the largest blue/green book has $12$ pages. 
To prove the reverse inequality, consider a Gallai 3-coloring of $K_{23}$, with base graph $\mathcal{B}$, chosen to have minimal order. Since
\[
\frac{2 \cdot 23}{23 - 14 + 1} < 5,
\]	
then by Theorem~\ref{Gallaistruct} and Lemmas~\ref{not3}, and \ref{RemoveLargeBaseGraph}, it suffices to consider the cases where $|V(\mathcal{B})|\in \{2,4\}$.  We consider the following cases. 

\underline{Case 1:} Assume that $|V(\mathcal{B})| = 4$. Without loss of generality, suppose that the $2$-coloring of $\mathcal{B}$ uses red and blue.  Denote the vertex sets for the blocks by $X_1$, $X_2$, $X_3$, and $X_4$ with $k_i := |X_i|$ and the indexing such that $k_1 \geq k_2\geq k_3 \geq k_4$. If $k_1 + k_2 \geq 14$, then a $B_{14}$ whose edges use at most two colors is formed by taking an edge between $X_3$ and $X_4$ as a spine and choosing any $14$ vertices in $X_1 \cup X_2$ as pages. Therefore, we may assume that $k_1 + k_2 \leq 13$.  
If $k_1\ge 9$, then $k_2\le 13-k_1$ implies that $$23=k_1+k_2+k_3+k_4\le k_1+3(13-k_1)\le -2(9)+39=21,$$ which is a contradiction.  Thus, we find that $\ceil{\frac{23}{4}}=6\le k_1\le 8$, from which it follows that $15 \ge k_2+k_3+k_4\le 17$.  In particular, note that if any $X_i$ contains a red or blue edge, then using that edge as a spine, a red/blue book with at least $15$ pages can be formed by using the vertices in the other three blocks as pages.  So, assume that every block is a complete green graph.  Since $k_4\le \floor{\frac{23}{4}}=5$, it follows that $k_1+k_2+k_3\ge 18$.  In the subgraph of $\mathcal{B}$ induced by the vertices corresponding to $X_1$, $X_2$, and $X_3$, at least two of the edges must be the same color.  If the edges joining $X_i$ to $X_j\cup X_k$ (where $\{i,j,k\}=\{1,2,3\}$) are all the same color (say, red), then a red/green $B_{16}$ can be formed by picking a spine $xy$ in $X_i$ and pages in $(X_i\setminus \{x,y\})\cup X_j\cup X_k$.

\underline{Case 2:} Assume that $|V(\mathcal{B})| = 2$.  Without loss of generality, suppose that the edge in $\mathcal{B}$ is red.  Denote the vertex sets for the blocks by $X_1$ and $X_2$, with $k_i := |X_i|$ and the indexing such that $k_1 \geq k_2$.  If $k_2 \geq 2$ and $k_1 \geq 14$, then we get a $B_{14}$ whose edges use at most two colors by choosing a spine in $X_2$ and any $14$ vertices in $X_1$ as pages. Thus, it suffices to consider the following subcases. 

\underline{Subcase 2.1:} Assume that $12\le k_1\le 13$ and $10\le k_2\le 11$.  As $k_2\ge r^2(B_2)=10$ \cite{CH2}, it follows that $X_2$ contains a red/blue $B_2$ or a green $B_2$.  In the former case, adding in the vertices in $X_1$ as pages results in a red/blue book with at least $14$ pages.  In the latter case, adding in the vertices in $X_1$ as pages results in a red/green book with at least $14$ pages.

\underline{Subcase 2.2:} Assume that $(k_1,k_2) = (22,1)$.  Denote the single vertex in $X_2$ by $x$. Using Theorem~\ref{Gallaistruct}, let $\mathcal{B}'$ be the base graph for the subgraph induced by $X_1$, chosen to have minimal order. Since 
\[
\frac{2 \cdot 23}{23 - 14 + 1} < 5 \quad \mbox{and} \quad \frac{5\cdot 14-1}{3}\le 23,
\]
then by Lemmas~\ref{not3}, \ref{RemoveLargeBaseGraph}, and \ref{mainlem2}, it suffices to consider the case where $|V(\mathcal{B}')| =2$.   Without loss of generality, assume that the edge in $\mathcal{B}'$ is red or blue.   Denote the vertex sets for the blocks in $\mathcal{B}'$ by $Y_1$ and $Y_2$, with $\ell_i := |Y_i|$ and the indexing such that $\ell_1 \geq \ell_2$. If $\ell_1 \geq 14$ and $\ell_2 \geq 2$, then a $B_{14}$ whose edges use at most two colors can be formed using an edge in $Y_2$ as a spine with any $14$ vertices in $Y_1$ as pages. Thus, it suffices to consider the following subsubcases. 

\underline{Subsubcase 2.2.1:} Assume that $(\ell_1,\ell_2)=(11,11)$.  As $\ell_2=r(B_2,B_3)$ (see \cite{Clan} and \cite{SXBP}), the subgraph induced by $Y_2$ contains a red/blue $B_2$ or a green $B_3$.  In the former case, a red/blue $B_{14}$ can be formed by adding in the vertices in $\{x\}\cup Y_1$ as pages.  In the latter case, a $B_{14}$ whose edges use at most two colors (one of which is green) can be formed by adding in the vertices in $Y_1$ as pages.

\underline{Subsubcase 2.2.2:} Assume that $(\ell_1,\ell_2)=(12,10)$.  As $k_2=r^2(B_2)$ \cite{CH2}, the subgraph induced by $Y_2$ contains a red/blue $B_2$ or a green $B_2$.  In the former case, a red/blue $B_{14}$ can be formed by adding in the vertices in $Y_1$ as pages.  In the latter case, a $B_{14}$ whose edges use at most two colors (one of which is green) can be formed by adding in the vertices in $Y_1$ as pages.

\underline{Subsubcase 2.2.3:} Assume that $(\ell_1,\ell_2)=(13,9)$. As $k_2\ge r^2(B_1)$ \cite{GG}, the subgraph induced by $Y_2$ contains a red/blue $B_1$ or a green $B_1$.  In the former case, a red/blue $B_{14}$ can be formed by adding in the vertices in $Y_1$ as pages.  In the latter case, a $B_{14}$ whose edges use at most two colors (one of which is green) can be formed by adding in the vertices in $Y_1$ as pages.

\underline{Subsubcase 2.2.4:} Assume that $(\ell_1,\ell_2)=(21,1)$.  Denote the single vertex in $Y_2$ by $y$.  Then using the red edge $xy$ as a spine, we get a red/blue $B_{21}$ (which contains a $B_{14}$) by adding the vertices in $Y_1$ as pages.

In all cases, a $B_{14}$ whose edges use at most two colors exists, from which it follows that $gr_2^3(B_{14})\le 23$.
\end{proof}

\begin{theorem}
	$gr_2^3(B_{15})=25$. 
\end{theorem}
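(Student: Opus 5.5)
For the lower bound I will apply Theorem~\ref{critlower} with $k=12$ and $n=3$, since $2k+1=25$ and $n+k=15$. The hypothesis $k<r(B_3,B_3)=14$ holds by \cite{RS}. I take $\mathcal{G}$ to be the blue/green coloring of $K_{12}$ obtained by deleting one vertex from the critical coloring of $r^2(B_3)$ in Figure~\ref{RS1.2}, i.e.\ the Paley graph of order $13$ and its complement.
\begin{itemize}
\item $\mathcal{G}$ has no monochromatic $B_3$, because it is a subcoloring of a critical coloring for $r^2(B_3)$.
\item The Paley graph of order $13$ is $6$-regular and self-complementary, so every vertex of $\mathcal{G}$ has at most $6$ edges of each color.
\item Here $\floor{\frac{n+k-1}{2}}+1=\floor{\frac{14}{2}}+1=8$, so $\mathcal{G}$ has no monochromatic $K_{1,8}$.
\end{itemize}
Theorem~\ref{critlower} then gives $gr^3_2(B_{15})\ge 25$.

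For the upper bound, consider a Gallai $3$-coloring of $K_{25}$ whose base graph $\mathcal{B}$ has minimal order. Since $\frac{2\cdot 25}{25-15+1}=\frac{50}{11}<5$ and $\frac{5\cdot 15}{3}\le 25$, Theorem~\ref{Gallaistruct} and Lemmas~\ref{not3}, \ref{RemoveLargeBaseGraph} and \ref{lem:no-four} reduce to $|V(\mathcal{B})|=2$. Let the blocks be $X_1,X_2$ with $k_1\ge k_2$ and red edges between them. If $k_1\ge 15$ and $k_2\ge 2$, a spine in $X_2$ with $15$ pages in $X_1$ finishes.

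Otherwise either $(k_1,k_2)=(24,1)$, or $k_1\le 14$, which forces $k_2\ge 11$ and $k_1\ge 13$. In the latter case $k_2\ge r^2(B_2)=10$ \cite{CH2}, so $X_2$ contains a red/blue $B_2$ or a green $B_2$. Adding $X_1$ as pages gives a red/blue or red/green book with at least $2+13=15$ pages.

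In the case $(24,1)$, let $x$ be the lone vertex, joined to everything by red edges. Let $\mathcal{B}'$ be a minimal base graph for $X_1$.
\begin{itemize}
\item Since $\frac{48}{10}<5$, Lemmas~\ref{not3} and \ref{RemoveLargeBaseGraph} give $|V(\mathcal{B}')|\in\{2,4\}$.
\item If $|V(\mathcal{B}')|=4$ and $\mathcal{B}'$ avoids red, Lemma~\ref{mainlem2} applies, since $25\ge\frac{74}{3}$.
\item If $|V(\mathcal{B}')|=4$ and $\mathcal{B}'$ uses red, the smallest block has order $\ell_4\le 6$. The red spine $xy$ with $y\in Y_4$ and pages $Y_1\cup Y_2\cup Y_3$ gives at least $18$ pages.
\end{itemize}

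If $|V(\mathcal{B}')|=2$, let the blocks be $Y_1,Y_2$ with $\ell_1\ge\ell_2$, joined by red or blue edges. If $\ell_1\ge15$ and $\ell_2\ge2$ we are done. The remaining cases, and how each closes, are:
\begin{itemize}
\item $(12,12)$: $r(B_2,B_3)=11$ (\cite{Clan}, \cite{SXBP}) gives a red/blue $B_2$ in $Y_2$, with pages $\{x\}\cup Y_1$, for $2+12+1=15$ pages; or a green $B_3$, with pages $Y_1$, for $3+12=15$ pages.
\item $(13,11)$: $r(B_1,B_2)=7$ \cite{CH3} gives a red/blue $B_1$, for $1+13+1=15$ pages; or a green $B_2$, for $2+13=15$ pages.
\item $(14,10)$: $r^2(B_1)=6$ gives a red/blue $B_1$, for at least $16$ pages; or a green $B_1$, for $15$ pages.
\item $(23,1)$: with $y$ the lone vertex, the red spine $xy$ and pages $Y_1$ give a red/blue $B_{23}$.
\end{itemize}
In the red/blue subcases, $x$ is a legitimate page because all its edges are red and the edges between $Y_1$ and $Y_2$ are red or blue. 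In the green subcases $x$ is not needed.

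I expect the main obstacle to be the lower-bound construction. I must confirm that deleting a vertex from the Paley coloring preserves both the absence of a monochromatic $B_3$ and the degree bound needed for the $K_{1,8}$ condition. I must also confirm that the page counts in Theorem~\ref{critlower} are really below $15$ here: at most $14$ for a red spine, $3-1+12=14$ for a blue spine, and $10$ for a blue/green book. The upper-bound casework is routine by comparison: each small subcase needs its matching two-color book Ramsey number to be at most $\ell_2$.
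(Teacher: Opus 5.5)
Your proposal is correct and follows the paper's proof essentially step for step. The lower bound uses the same coloring: the Paley-derived coloring of $K_{12}$, doubled along a red $K_2$; you certify it through Theorem~\ref{critlower} rather than counting pages directly. The upper bound uses the same reduction to base graphs of order $2$ and $4$, the same subcases, and the same single-vertex and base-graph-of-order-4 arguments. The only deviations are cosmetic: you split $13\le \ell_1\le 14$ into $(13,11)$ and $(14,10)$ using $r(B_1,B_2)=7$ and $r^2(B_1)=6$, where the paper applies $r^2(B_2)=10$ to both, and your constants ($\frac{48}{10}<5$, $25\ge\frac{74}{3}$) correct the typos in the paper's Case~2.
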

\begin{proof}
In order to obtain the lower bound, begin by letting $\mathcal{G}$ be the blue/green coloring of $K_{12}$ 
formed by taking the graph in Figure \ref{RS1.2} and deleting a single vertex.  
Next, take a red $K_2$ and replace both of its vertices with copies of $\mathcal{G}$, forming a Gallai $3$-coloring of $K_{24}$.  The largest red/blue book with a red spine has $12$ pages, the largest red/blue book with a blue spine has $14$ pages, and the same is true for the largest red/green books.  The largest blue/green book has $10$ pages.  It follows that $gr_2^3(B_{15}) \geq 25$.

To prove the reverse inequality, consider a Gallai 3-coloring of $K_{25}$, with base graph $\mathcal{B}$, chosen to have minimal order. Since
\[
\frac{2 \cdot 25}{25 - 15 + 1} < 5 \quad \text{and} \quad \frac{5 \cdot 15}{3} \leq 25,
\]	
then by Theorem~\ref{Gallaistruct} and Lemmas~\ref{not3}, \ref{RemoveLargeBaseGraph}, and \ref{lem:no-four}, it suffices to consider the case where $|V(\mathcal{B})|=2$. Denote the vertex sets for the blocks by $X_1$ and $X_2$, with $k_i := |X_i|$ and the indexing such that $k_1 \geq k_2$. Without loss of generality, suppose the edge joining $X_1$ and $X_2$ is red. If $k_1 \geq 15$ and $k_2 \geq 2$ then we get a $B_{15}$ whose edges use at most two colors by choosing a spine in $X_2$ and any $14$ vertices in $X_1$ as pages. Thus, it suffices to consider the following cases. 

\underline{Case 1:} Assume that $k_1 \leq 14$ and $k_2 \geq 2$. Since $k_1 \leq 14$, we have that $k_2 \geq 25 - 14 = 11$. As $k_2 \geq r^2(B_2)=10$ \cite{CH2}, we know that the subgraph induced by $X_2$ contains either a red/blue $B_2$, or a green $B_2$. Since $k_1 \geq \lceil \frac{25}{2} \rceil = 13$, then adding the vertices in $X_1$ as pages yields a book with at least $15$ pages, and whose edges use at most two colors (either red/blue or red/green). 

\underline{Case 2:} Assume that $(k_1,k_2) = (24,1)$. Denote the single vertex in $X_2$ by $x$. Using Theorem~\ref{Gallaistruct}, let $\mathcal{B}'$ be the base graph for the induced graph on $X_1$, chosen to have minimal order. Since 
\[
\frac{2 \cdot 23}{23 - 14 + 1} < 5,
\]
then by Lemmas~\ref{not3} and \ref{RemoveLargeBaseGraph}, it suffices to consider the cases where $|V(\mathcal{B}')| \in \{2,4\}$. 

Let us first assume that $|V(\mathcal{B}')|=4$. If $\mathcal{B}'$ uses the two colors other than red, then since $25 \geq \frac{5 \cdot 14 - 1}{3}$ and $x$ is a vertex incident with edges in only red, Lemma~\ref{mainlem2} implies the existence of a $B_{15}$ whose edges use at most two colors. Hence, we may assume that one of the colors used in $\mathcal{B}'$ is red. Denote the vertex sets for the blocks in $\mathcal{B}'$ by $Y_1$, $Y_2$, $Y_3$, and $Y_4$, with $\ell_i := |Y_i|$ and the indexing such that $\ell_1 \geq \ell_2 \geq \ell_3 \geq \ell_4$. Since $\ell_4 \leq \frac{24}{4} = 6$, it follows that $\ell_1 + \ell_2 + \ell_3 \geq 24 - 6 = 18$. Let $y$ be any vertex in $Y_4$. Taking the red edge $xy$ as a spine and adding $Y_1 \cup Y_2 \cup Y_3$ as pages, we get a book with at least $18$ pages, and whose edges use at most two colors.

Now suppose that $|V(\mathcal{B}')|=2$. 
Without loss of generality, assume that the edge in $\mathcal{B}'$ is red or blue.  Denote the vertex sets for the blocks in $\mathcal{B}'$ by $Y_1$ and $Y_2$, with $\ell_i := |Y_i|$ and the indexing such that $\ell_1 \geq \ell_2$. If $\ell_1 \geq 15$ and $\ell_2 \geq 2$, then a $B_{15}$ whose edges use at most two colors can be formed using an edge in $Y_2$ as a spine with any $14$ vertices in $Y_1$ as pages. Thus, it suffices to consider the following subcases. 

\underline{Subcase 2.1:} Assume that $(\ell_1,\ell_2) = (12,12)$.  As $\ell_2 \geq r(B_2,B_3)=11$ (see \cite{Clan} and \cite{SXBP}), we know that the subgraph induced by $Y_2$ contains either a red/blue $B_2$, or a green $B_3$. In the former case, adding the vertex $x$ along with the vertices in $Y_1$ yields a red/blue book with at least $15$ pages, whereas in the latter case, adding the vertices in $Y_1$ yields a blue/green book with at least $15$ pages.

\underline{Subcase 2.2:} Assume that $13 \leq \ell_1 \leq 14$ and $\ell_2 \geq 2$. Since $\ell_1 \leq 14$, we have that $\ell_2 \geq 24 - 14 = 10$.  As $\ell_2 \geq r^2(B_2)$ \cite{CH2}, we know that the subgraph induced by $Y_2$ contains either a red/blue $B_2$, or a green $B_2$. Hence, adding the vertices in $Y_1$ as pages yields a book with at least $15$ pages, and whose edges use at most two colors. 

\underline{Subcase 2.3:} Assume that $(\ell_1,\ell_2) = (23,1)$. Denote the single vertex in $Y_2$ by $y$. Then using the red edge $xy$ as a spine, we get a red/blue $B_{23}$ (which contains a $B_{15}$) by adding the vertices in $Y_1$ as pages.

In all cases, a $B_{15}$ whose edges use at most two colors exists, from which it follows that $gr_2^3(B_{15}) \le 25$.
\end{proof}

\section{Conclusion}\label{conclusion}
We are optimistic that the methods presented in Section~\ref{main} could be continued for larger values of n, repeating the same techniques, with an increasing number of cases. So far, we have been unable to generalize our arguments, which often relied on the known values of $r(B_{n-1},B_n)$ and $r^2(B_n)$. One could certainly continue this pursuit, exhausting the known values of $2$-color Ramsey numbers for books. A different interesting direction that could be considered would be to determine the values of $gr^t_2(B_n)$ when $t>3$ and $n\ge 5$, extending the work initiated in Section \ref{B3andB4}.

In \cite{JM}, Jakhar and Moun observed that $$gr^t_s(K_{2,n})\le gr^t_s(B_n),$$ since $K_{2,n}$ is a subgraph of $B_n$.  When $t=3$, $s=2$, and $3\le n\le 4$, equality holds.  Since the existence of a $K_{2,n}$-subgraph whose edges use at most $s-1$ colors implies the existence of a $B_n$-subgraph whose edges use at most $s$ colors, it also follows that $$gr^t_s(B_n)\le gr^t_{s-1}(K_{2,n}).$$  One way to extend the work in this paper would be to determine the values of $gr^t_s(K_{2,n})$, for $n\ge 3$.

Finally, suppose that a Gallai $t$-coloring of $K_p$ contains a $B_n$-subgraph whose edges use at most $s$ colors, where $p>n+2$.   Assume that the $B_n$-subgraph has spine $xy$ and pages $z_1, z_2 \dots, z_n$.  Let $z$ be a vertex other than $x,y,z_1,z_2,\dots , z_n$ (which must exist since $p>n+2$).  Since the edges $xz$ and $yz$ introduce at most one new color to the $B_n$-subgraph in order to avoid a rainbow triangle, it follows that $$gr^t_{s+1}(B_{n+1})\le gr^t_s(B_n).$$  This inequality provides a good starting point for considering $gr^t_s(B_n)$ when $s>2$.

\

\noindent {\bf Acknowledgements.}  The first-listed author was funded by a Scholarly Development Assignment Program award from Western Carolina University.

\bibliographystyle{amsplain}



\end{document}